\pgfplotsset{compat=1.15}
\newtheorem{theorem}{Theorem}
\newtheorem{lemma}[theorem]{Lemma}
\newtheorem{observation}[theorem]{Observation}
\newtheorem{proposition}[theorem]{Proposition}
\definecolor{lightblue}{rgb}{0.68,0.85,0.9}
\author[1]{Anna Gujgiczer}
\author[2]{Reza Naserasr} 
\author[3]{Rohini S}
\author[4]{S Taruni}
\affil[1]{Department of Computer Science and Information Theory, Faculty of Electrical Engineering and Informatics,
	Budapest University of Technology and Economics and
	MTA-BME Lend\"ulet Arithmetic Combinatorics Research Group
	Email address: gujgicza@cs.bme.hu}
\affil[2]{Universit\'{e} de Paris, IRIF, CNRS, F-75006, Paris, France. 	
	Email address: reza@irif.fr.}	
\affil[3]{Department of Mathematics
	Indian Institute of Technology Madras
	Chennai 600 036, India
	Email: s.rohini@smail.iitm.ac.in}
\affil[4]{Department of Mathematics, Indian Institute of Technology Dharwad, Dharwad 580011, India Email address : taruni.sridhar@gmail.com}
\begin{document}
	\setlength{\parindent}{0pt}	
	
	\title{Winding number and circular 4-coloring of signed graphs}	
	\date{\today} 
	
	\maketitle 
	\begin{abstract}
		Concerning the recent notion of circular chromatic number of signed graphs, for each given integer $k$ we introduce two signed bipartite graphs, each on $2k^2-k+1$ vertices, having shortest negative cycle of length $2k$, and the circular chromatic number 4. 
		
		Each of the construction can be viewed as a bipartite analogue of the generalized Mycielski graphs on odd cycles, $M_{\ell}(C_{2k+1})$. In the course of proving our result, we also obtain a simple proof of the fact that $M_{\ell}(C_{2k+1})$ and some similar quadrangulations of the projective plane have circular chromatic number 4. These proofs have the advantage that they illuminate, in an elementary manner, the strong relation between algebraic topology and graph coloring problems.
		
	\end{abstract}

	\section{Introduction}

	The question of the chromatic number of graphs embedded on a given surface is one of the fundamental questions in graph theory which has led to the development of this subject on various directions such as minor theory, coloring and homomorphism in general, with the relation between the two notions of minor and coloring being one of the most central part of graph theory.
	
	To better understand this relation the notion of balanced-chromatic number of signed graphs has been recently introduced in \cite{JMNNQ23} using which a signed version of the Hadwiger conjecture is presented. The circular chromatic number of signed graphs, introduced earlier in \cite{NWZ21}, is observed to be a refinement of the balanced-chromatic number of signed graphs. Motivated by these developments and in connection to several other work, in this paper we study the question of circular chromatic number of signed graphs embedded on the projective plane such that every face is a positive 4-cycle. Notation and terminology are given in Section~\ref{sec:Notation}.
	  
	\subsection{Further motivation}
	The problem of building graphs of high girth and high chromatic number is one of the basic questions of graph coloring and it has been studied extensively in the literature. In particular, the original proof of Erd\H{o}s for the existence of such graphs has led to the development of probabilistic methods in graph theory. Since then several constructive methods were presented, but none are easy to grasp. With a weaker condition of high odd girth instead of high girth, there are several natural classes of graphs. In particular, in the family of the Kneser graphs one can find examples of high odd girth and high chromatic number. The proof of the lower bound for the chromatic number of the Kneser graphs, by L. Lov\'asz \cite{LLKn}, was the birthplace of the connection between algebraic topology and graph coloring. Further developing this method, Stiebitz introduced a generalization of the Mycielski construction in \cite{S85} to build small graphs of high odd girth and high chromatic number. 
	Generalized Mycielski on odd cycles have been studied independently by many authors and several results on their chromatic number \cite{P92, VT95, Y96}, circular chromatic number \cite{CHZ99, DGMVZ05, HZ03, LLGS03, ST06} and on various other related parameters \cite{lin2006several, T01} are proved. These graphs hold the best known upper bound on the order of graphs of given odd-girth and chromatic number 4 and their orders are not far from the best known lower bounds.
	
	In this work, building on the ideas from several works in the literature, we first present a relatively short proof that the generalized Mycielski graphs on odd cycles have circular chromatic number 4. The proof has the advantage of formalizing the use of winding number in graph coloring (see Section~\ref{sec:Conclusions}). We then present three similar classes of signed graphs of high negative girth and circular chromatic number 4. The graphs are built similarly to the generalized Mycielski on odd cycles when viewed as a quadrangulation of the projective plane, the main difference being that the subgraph induced by the outer layer induces a M\"obius ladder. In particular, for each positive integer $k$, we have two signed bipartite graphs, each on $2k^2-k+1$ vertices, where the shortest negative cycles is of length $2k$ and that their circular chromatic number is $4$. This is conjectured to be the best possible, see Section~\ref{sec:Construction} for more.

	In Section~\ref{sec:Notation}, we give the necessary notation and the terminology. In Section~\ref{sec:history}, we provide a historical account of what is known. In Section~\ref{sec:Construction}, we discuss three families of signed graphs and in the Section~\ref{sec:Winding-Coloring}, we prove that their circular chromatic number is $4$. Concluding remarks are given in Section~\ref{sec:Conclusions}. 
	
	\section{Notation}\label{sec:Notation}

	We consider simple graphs unless clearly stated otherwise. A \textit{signed (simple) graph} $(G,\sigma)$ is a graph $G$ together with the assignment $\sigma$ of signs to the edges. We denote by $(G, -)$ the signed graph $G$ with all edges negative. If $G$ is bipartite, then  $(G, \sigma)$ is called a \textit{signed bipartite graph} (in some literature, this term is used to refer to a balanced signed graph, that is a signed graph with no negative cycle). The sign of a structure in $(G, \sigma)$ (such as a cycle, a closed walk, a path) is the product of the signs of edges in the said structure counting multiplicity. 
	
	Given an integer $n$, $n\geq 3$, we denote by $C_{n}$ the cycle (graph) on $n$ vertices. That is a 2-regular connected graph on $n$ vertices. Furthermore, we view $C_{n}$ as a plane graph, that is, the graph together with a planar embedding. For topological use of $C_n$, one may identify it with the regular polygon on $n$ vertices. Vertices of $C_n$ are normally labeled as $v_1, v_2, \ldots, v_n$. The \textit{exact square} of $C_n$, denoted $C^{\#2}_n$, is the graph on the same set of vertices where two vertices are adjacent if they are at a distance (exactly) 2 in $C_n$. Observe that for odd values of $n$, $C^{\#2}_n$ is also a cycle of length $n$. For even values of $n$, $C^{\#2}_n$ consists of two connected components, each isomorphic to a cycle of length $\frac{n}{2}$. They are induced on sets of vertices with odd and even indices and will be denoted, respectively, by $C^{\#2o}_n$ and $C^{\#2e}_n$.

	Given a positive real number, we denote by $O_r$ the (geometric) circle of circumference $r$. That would be a circle of radius $\frac{r}{2\pi}$. The \textit{antipodal} of a point $x$ on $O_r$ is the unique point $\overline{x}$ on $O_r$ which is collinear with $x$ and the center of the circle.
	
	Given a real number $r$, $r\geq 2$, a \textit{circular $r$-coloring} of a signed graph $(G, \sigma)$ is a mapping $\psi$ of the vertices of $G$ to the points of $O_r$ in such a way that when $xy$ is a negative edge, then the distance of $\psi(x)$ from $\psi(y)$ on $O_r$ is at least $1$ and if $xy$ is a positive edge, then the distance of $\psi(x)$ from $\overline{\psi(y)}$ is at least 1, equivalently, the distance between $\psi(x)$ and $\psi(y)$ is at most $\frac{r}{2}-1$. The \textit{circular chromatic number} of $(G, \sigma)$, denoted $\chi_c(G, \sigma)$, is the infimum of $r$ such that $(G, \sigma)$ admits a circular $r$-coloring. When restricted to signed graphs where all edges are negative, we have the classic notion of circular coloring of graphs. This extension to signed graphs is first presented in \cite{NWZ21} noting that a different but similar parameter under a similar name has been introduced in \cite{KS18}. However, compared to \cite{NWZ21}, the role of positive and negative edges are exchanged for better suitability with literature on structural theory on signed graphs, especially in regard to the minor theory of signed graphs.
	
	Among basic results, the following should be noted for the purpose of this work. The infimum in the definition is always attained for finite graphs, even allowing multi-edges and positive loops, but a negative loop cannot be colored with a finite $r$. For the class of signed bipartite (multi)graphs, we have the trivial upper bound of $\chi_c(G, \sigma)\leq 4$, to see this, map the vertices of one part of $G$ to the north pole of $O_4$ and the vertices of the other part to the east point. Even with such a strong upper bound the problem of determining the exact value of the circular chromatic number of a given signed bipartite graph is of high importance and, in general, quite a difficult problem. In particular, as it is pointed out in \cite{NWZ21}, using some basic graph operations, namely indicators, one can transform a graph $G$ into a signed bipartite graph $F(G)$ such that the circular chromatic number of $F(G)$ determines the circular chromatic number of $G$. A basic example of this sort is the construction $S(G)$, which is obtained from a given graph $G$ by replacing each edge $uv$ of $G$ with a negative 4-cycle $ux_{uv}vy_{uv}$ where $x_{uv}$ and $y_{uv}$ are new and distinct vertices. It is then shown in \cite{NWZ21} that $\chi_c(S(G))=4-\frac{4}{\chi_c(G)+1}$. Further connections with some well-known study and theorems, such as the four-color theorem, is discussed in \cite{KNNW21+} and \cite{NW21+}. 
	
	Motivated by these observations and in connection with some other studies, some of which are mentioned in the last section, the question of constructing signed bipartite graphs of high negative girth but circular chromatic number 4 is of high interest. In this work, we present two bipartite analogues of the generalized Mycielski graph on odd cycles as examples of signed bipartite graphs.

	The proofs also lead to an elementary understanding of the relation between coloring problems of graphs and basic notions of algebraic topology, namely the \textit{winding number}.

 Given a closed curve $\gamma$ on the plane, the winding number of $\gamma$, defined rather intuitively, is the number of times $\gamma$ is winded around the origin in the clockwise direction, noting that: if the origin is not in the part bounded by $\gamma$, then the winding number is 0 and that winding in anticlockwise direction is presented by a negative number. Here the closed curves we work with are mappings to $O_r$ with the center of $O_r$ being the center of the plane. They can be thought of as continuous mappings of $[0,1]$ to $O_r$ with the condition that the two endpoints, i.e., $0$ and $1$ are mapped to the same point.  
	
	\section{A historical note}\label{sec:history}
	In 1955 Mycielski introduced the construction \cite{M55} that is now known as the Mycielski construction. His goal of the construction was to build triangle-free graphs of high chromatic number. In this construction, given a graph $G$ one adds a vertex $v'$ for each vertex $v$ of $G$, which is joined to all neighbors of $v$ in $G$ and then adds a vertex $u$ which is joined to all vertices $v'$. It is not difficult to prove that the resulting graph has chromatic number $\chi(G)+1$.
	
	Generalization of the construction, where one adds several layers of copy vertices before adding a universal vertex to the last layer, was first considered independently in Habilitation thesis of M. Stiebitz \cite{S85} and Ph.D. thesis of N. Van Ngoc \cite{V87}. (The former is written in German, but its result can also be found in \cite{GYJS, M03} and the latter is in Hungarian.) Stiebitz applied methods of algebraic topology to prove that if one starts with $K_2$ and iteratively builds a generalized Mycielski, at each step the chromatic number would increase by 1. This does not hold for every graph, though. For example, the chromatic number of the complement of $C_7$ is 4, and any generalized Mycielski of it, except the original one, is also of chromatic number 4. It has been shown recently in \cite{MS19} that the result of Stiebitz is equivalent to the Borsuk-Ulam theorem.
	
	First English publications of the fact that the generalized Mycielski based on an odd cycle has chromatic number 4 appeared independently in \cite{P92, VT95,Y96}. The proof of Payan \cite{P92} is about the special case of $M_{k}(C_{2k+1})$ as they appear as subgraphs of nonbipartite Cayley graphs on binary groups, but it works the same for any $M_{\ell}(C_{2k+1})$. This proof has strongly motivated the work presented here. The proof of \cite{VT95} is presented quite differently, but the hidden idea behind the proof is the same. The result of \cite{Y96} is more general. It is shown that if $G$ is not bipartite but admits an embedding on the projective plane where all facial cycles are 4-cycles, then $\chi (G)=4$. That such structures are necessary for 4-chromatic triangle-free projective planar graphs was conjectured in \cite{Y96} and proved in \cite{GT97}. The well-known fact that $M_{\ell}(C_{2k+1})$ quadrangulate the projective plane is evident from our presentation of these graphs in the next section.

	The circular chromatic number of Mycielski constructions was first studied in \cite{CHZ99}. That of the generalized Mycielski is studied in  \cite{DGMVZ05,HZ03,LLGS03,ST06} among others. In particular, that $\chi_c(M_{\ell}(C_{2k+1}))=4$ follows, independently, from the general results of \cite{DGMVZ05} and of \cite{ST06}. In the latter, it is shown that if the lower bound of $2k$ for the chromatic number is proved using topological connectivity, then the same lower bound works for the circular chromatic number as well. 

	\section{The construction}\label{sec:Construction}

	 The main body of the construction we will work with is an almost quadrangulation of the cylinder which we define here. Given positive integers $\ell$ and $k$, $C_{_{\ell\times k}}$ is the graph whose vertex set is $V=\{v_{_{i,j}} \mid 1 \leq i \leq \ell, 1\leq j \leq k\}$ with the edge set $E=\{ v_{_{i,j}}v_{_{i+1, j-1}},  v_{_{i,j}}v_{_{i+1, j}} \mid 1 \leq i \leq \ell-1, 1\leq j \leq k\}$. Here, and in the rest of this work, the addition on the indices is taken modular the maximum value of the said index, which is $\pmod {k}$ in this case. For a geometric presentation, as depicted in Figure~\ref{fig:ell-Times-2k+1}, we consider $\ell$ parallel and non-contractible circles on the cylinder labeled $1, 2, \dots, \ell$. Then for odd values of $\ell$ we take points $\frac{(2j-1)\pi}{k}$ ($j=1,2, \dots, k$) on them and for even values of $\ell$ we take points $\frac{(2j)\pi}{k}$ ($j=1,2, \dots, k$) to present the vertices of $C_{_{\ell\times k}}$.

	We note that, as a graph $C_{_{\ell\times (2k+1)}}$ is isomorphic to the categorical product $P_{\ell} \times C_{2k+1}$. A general picture of this graph is depicted in Figure~\ref{fig:ell-Times-2k+1} where the dashed circles are only presenting the layers, but they will play a key role.

	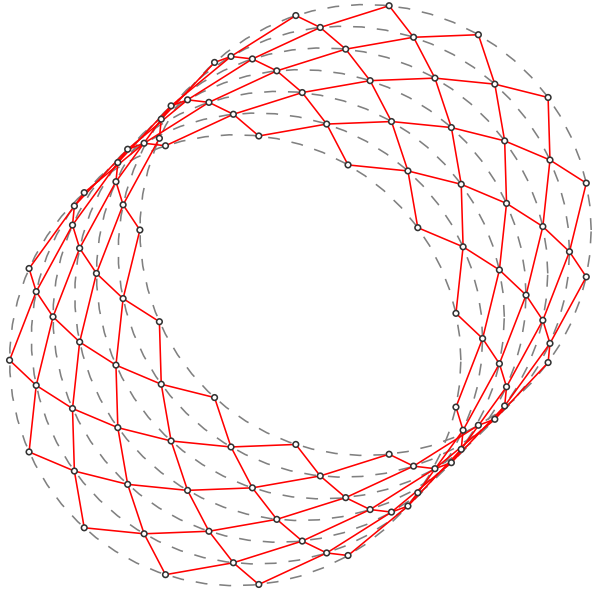
\begin{figure}[ht]
		\centering
		\scalebox{1.5}{
			\tikzset{math3d/.style= {x={(2cm,0cm)}, y={(0cm,2cm) }}}
			\begin{tikzpicture}[math3d]
				\newcommand{\n}{15}
				\newcommand{\h}{7}
				\newcommand{\ch}{12}
				\newcommand{\rl}{1}
				\newcommand{\rh}{1}
				
				\foreach \i in {1,...,\h}{
					\path[draw, dashed, gray, opacity=.2] plot[domain=0:2*pi,samples=4*\n] ({\rh*cos(\x r)}, {\rh*sin(\x r)}, .5*\i);
					
				}

				\foreach \i in {1,3,5}{
					\foreach \t in {1,...,\n} {
						
						\draw [ thin, red, opacity=.6] ( ({(\rl)*cos((2*\t+1)*pi/\n r)},{\rl*sin((2*\t+1)*pi/\n r)},.5*\i) -- ({\rh*cos((2*\t+1)*pi/\n r-\ch)},{\rh*sin((2*\t+1)*pi/\n r-\ch)},.5*\i+.5)  -- cycle;
						
						\draw [ thin, red, opacity=.6] ( ({(\rl)*cos((2*\t+1)*pi/\n r)},{\rl*sin((2*\t+1)*pi/\n r)},.5*\i) -- ({\rh*cos((2*\t+1)*pi/\n r+\ch)},{\rh*sin((2*\t+1)*pi/\n r+\ch)},.5*\i+.5)  -- cycle;
					}

				}
				
				\foreach \i in {2,4,6}{
					\foreach \t in {1,...,\n} {
						
						\draw [ thin, red, opacity=.6] ( ({(\rl)*cos((2*\t+1)*pi/\n r+\ch)},{\rl*sin((2*\t+1)*pi/\n r+\ch)},.5*\i) -- ({\rh*cos((2*\t+1)*pi/\n r)},{\rh*sin((2*\t+1)*pi/\n r)},.5*\i+.5)  -- cycle;
						
						\draw [ thin, red, opacity=.6] ( ({(\rl)*cos((2*\t+1)*pi/\n r-\ch)},{\rl*sin((2*\t+1)*pi/\n r-\ch)},.5*\i) -- ({\rh*cos((2*\t+1)*pi/\n r)},{\rh*sin((2*\t+1)*pi/\n r)},.5*\i+.5)  -- cycle;
					}
				} 
				
				\foreach \i in {1,3,5,7}{

					\foreach \t in {1,...,\n} {
						\node[circle, black, inner sep=0mm, draw=black!80, fill=white, minimum size=.5mm] at  ({(\rl)*cos((2*\t+1)*pi/\n r)},{\rl*sin((2*\t+1)*pi/\n r)},.5*\i){};
					}
				}
				
				\foreach \i in {2,4,6}{

					\foreach \t in {1,...,\n} {
						\node[circle, red, inner sep=0mm, draw=black!80, fill=white, minimum size=.5mm] at  ({(\rl)*cos((2*\t)*pi/\n r)},{\rl*sin((2*\t)*pi/\n r)},.5*\i){};
					}
				}

			\end{tikzpicture}
		}
		%
			%
				%
				%
			%
			%
			%
		%
				%
				%
		%
		%
		\caption{$C_{_{\ell\times (2k+1)}}$ with layers highlighted.}
		\label{fig:ell-Times-2k+1}
	\end{figure}

	\subsection{$M_{\ell}(C_{2k+1})$}
	Given positive integers $\ell$ and $k$, the generalized Mycielski graph of the odd cycle $C_{2k+1}$, $M_{\ell}(C_{2k+1})$ is built from $C_{_{\ell\times (2k+1)}}$ by the following two steps: 
	
	\begin{itemize}
		\item Connect $v_{_{1,j}}$ to $v_{_{1,j+k}}$ (Figure~\ref{fig:CylinderPlusCycle}, right).  
		\item Add a new vertex $u$ and connect it to all vertices $v_{_{\ell,j}}$, $j=1,\ldots, 2k+1$ (Figure~\ref{fig:CylinderPlusCycle}, left).
	\end{itemize}

	\begin{figure}[ht]
		\centering
		
		\begin{minipage}{.4\textwidth}

			\scalebox{1.3}{
				\tikzset{math3d/.style= {x={(2cm,0cm)}, y={(0cm,2cm)}}}
				\begin{tikzpicture}[math3d]
					\newcommand{\n}{15}
					\newcommand{\h}{7}
					\newcommand{\ch}{12}
					\newcommand{\rl}{1}
					\newcommand{\rh}{1}
					
					
					\path[draw, gray] plot[domain=0:2*pi,samples=4*\n] ({\rh*cos(\x r)}, {\rh*sin(\x r)}, 3.5);
					
					\foreach \i in {1,...,6}{
						\path[draw, dashed, gray] plot[domain=0:2*pi,samples=4*\n] ({\rh*cos(\x r)}, {\rh*sin(\x r)}, .5*\i);
					}
					
					%
					
					\foreach \i in {1,3,5}{
						\foreach \t in {1,...,\n} {
							
							\draw [thick, red, opacity=0.2] ( ({(\rl)*cos((2*\t+1)*pi/\n r)},{\rl*sin((2*\t+1)*pi/\n r)},.5*\i) -- ({\rh*cos((2*\t+1)*pi/\n r-\ch)},{\rh*sin((2*\t+1)*pi/\n r-\ch)},.5*\i+.5)  -- cycle;
							
							\draw [ thick, red, opacity=0.2] ( ({(\rl)*cos((2*\t+1)*pi/\n r)},{\rl*sin((2*\t+1)*pi/\n r)},.5*\i) -- ({\rh*cos((2*\t+1)*pi/\n r+\ch)},{\rh*sin((2*\t+1)*pi/\n r+\ch)},.5*\i+.5)  -- cycle;
						}

					}
					
					\foreach \i in {2,4,6}{
						\foreach \t in {1,...,\n} {
							
							\draw [ thick, red, opacity=0.2] ( ({(\rl)*cos((2*\t+1)*pi/\n r+\ch)},{\rl*sin((2*\t+1)*pi/\n r+\ch)},.5*\i) -- ({\rh*cos((2*\t+1)*pi/\n r)},{\rh*sin((2*\t+1)*pi/\n r)},.5*\i+.5)  -- cycle;
							
							\draw [ thick, red, opacity=0.2] ( ({(\rl)*cos((2*\t+1)*pi/\n r-\ch)},{\rl*sin((2*\t+1)*pi/\n r-\ch)},.5*\i) -- ({\rh*cos((2*\t+1)*pi/\n r)},{\rh*sin((2*\t+1)*pi/\n r)},.5*\i+.5)  -- cycle;
						}
					}

					\foreach \t in {1,...,\n} {
						
						\draw [  red, opacity=1] ( ({(cos((2*\t+14)*pi/\n r+\ch)},{sin((2*\t+14)*pi/\n r+\ch)},3.5) -- (-.5,-.5)  -- cycle;
						
					}
					\draw node[circle, red, inner sep=0mm, draw=black!80, fill=white, minimum size=1mm] at (-.5,-.5) {};
				\end{tikzpicture}
			}
		\end{minipage}
		\begin{minipage}{.4\textwidth}
			\scalebox{1.3}{
				\tikzset{math3d/.style= {x={(2cm,0cm)}, y={(0cm,2cm)}}}
				\begin{tikzpicture}[math3d]
					\newcommand{\n}{15}
					\newcommand{\h}{7}
					\newcommand{\ch}{12}
					\newcommand{\rl}{1}
					\newcommand{\rh}{1}
					
					\path[draw, gray] plot[domain=0:2*pi,samples=4*\n, opacity=.5] ({\rh*cos(\x r)}, {\rh*sin(\x r)}, .5);
					
					\foreach \i in {1,...,\h}{
						\path[draw, dashed, gray] plot[domain=0:2*pi,samples=4*\n] ({\rh*cos(\x r)}, {\rh*sin(\x r)}, .5*\i);
						
						%
						
					}
					\foreach \i in {1,3,5}{
						\foreach \t in {1,...,\n} {
							
							\draw [thick, red, opacity=0.2] ( ({(\rl)*cos((2*\t+1)*pi/\n r)},{\rl*sin((2*\t+1)*pi/\n r)},.5*\i) -- ({\rh*cos((2*\t+1)*pi/\n r-\ch)},{\rh*sin((2*\t+1)*pi/\n r-\ch)},.5*\i+.5)  -- cycle;
							
							\draw [ thick, red, opacity=0.2] ( ({(\rl)*cos((2*\t+1)*pi/\n r)},{\rl*sin((2*\t+1)*pi/\n r)},.5*\i) -- ({\rh*cos((2*\t+1)*pi/\n r+\ch)},{\rh*sin((2*\t+1)*pi/\n r+\ch)},.5*\i+.5)  -- cycle;
						}

					}
					
					\foreach \i in {2,4,6}{
						\foreach \t in {1,...,\n} {
							
							\draw [ thick, red, opacity=0.2] ( ({(\rl)*cos((2*\t+1)*pi/\n r+\ch)},{\rl*sin((2*\t+1)*pi/\n r+\ch)},.5*\i) -- ({\rh*cos((2*\t+1)*pi/\n r)},{\rh*sin((2*\t+1)*pi/\n r)},.5*\i+.5)  -- cycle;
							
							\draw [ thick, red, opacity=0.2] ( ({(\rl)*cos((2*\t+1)*pi/\n r-\ch)},{\rl*sin((2*\t+1)*pi/\n r-\ch)},.5*\i) -- ({\rh*cos((2*\t+1)*pi/\n r)},{\rh*sin((2*\t+1)*pi/\n r)},.5*\i+.5)  -- cycle;
						}
					}

					\foreach \t in {1,...,\n} {
						
						\draw [ red, opacity=1] ( ({(cos((2*\t+14)*pi/\n r+\ch)},{sin((2*\t+14)*pi/\n r+\ch)},.5) -- ({cos((2*\t+1)*pi/\n r)},{sin((2*\t+1)*pi/\n r)},.5)  -- cycle;
						
					}
					
				\end{tikzpicture}
			}
		\end{minipage}
		\caption{Constructions on bottom and top layers.}
		\label{fig:CylinderPlusCycle}
	\end{figure}
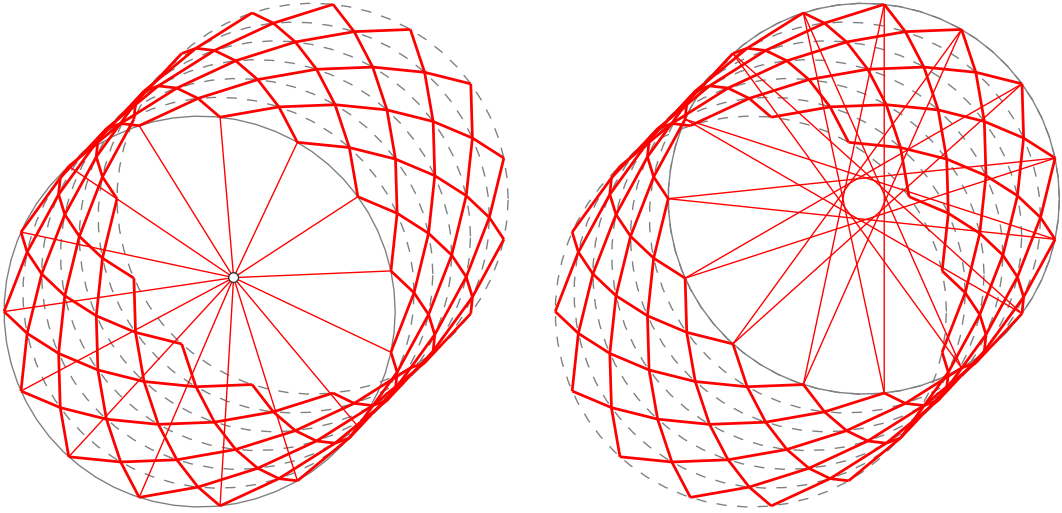
	
	Observe that the added edges in the first item form an isomorphic copy of $C_{2k+1}$. One can easily observe that starting with this cycle, the classic definition of a generalized Mycielski graph results in the same graph.
	The graph $M_1(C_3)$ is $K_4$. The graph $M_2(C_5)$ is the well-known Gr\"{o}zsch graph. 
	To show that $M_2(C_5)$ is the smallest 4-chromatic triangle-free graph is proposed as an exercise in \cite{H69}. 
	Furthermore, Chvátal showed in \cite{C74} that $M_2(C_5)$ is the only 4-chromatic triangle-free graph on 11 vertices. 
	
	The following is a key property of $M_{\ell}(C_{2k+1})$. 
	
	\begin{proposition}
		The length of the shortest odd cycle of $M_{\ell}(C_{2k+1})$ is the $\min\{2k+1,2l+1\}$.
	\end{proposition}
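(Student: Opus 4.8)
The plan is to prove the two inequalities separately: a construction exhibiting odd cycles of lengths $2k+1$ and $2\ell+1$, and a lower bound showing that no odd cycle is shorter than $\min\{2k+1,2\ell+1\}$. For the upper bound I would write down two explicit odd cycles. The first is the \emph{bottom cycle}: since $\gcd(k,2k+1)=1$, the added edges $v_{1,j}v_{1,j+k}$ ($j\in\mathbb{Z}_{2k+1}$) form a single cycle $v_{1,1}v_{1,1+k}v_{1,1+2k}\cdots$ of length $2k+1$. The second has length $2\ell+1$: from $v_{1,1}$ climb one column to $v_{\ell,1}$ using $\ell-1$ vertical edges, step up to $u$ and back down to $v_{\ell,k+1}$ (two edges), descend that column to $v_{1,k+1}$ using $\ell-1$ vertical edges, and close up with the single bottom edge $v_{1,k+1}v_{1,1}$; the two columns used are distinct, so this is indeed a cycle, of length $2(\ell-1)+3=2\ell+1$.

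For the lower bound, introduce the height function $h$ with $h(v_{i,j})=i$ and $h(u)=\ell+1$. Every edge changes $h$ by exactly $1$ except the bottom edges, which preserve it. Along any closed walk the height-changing steps contribute a cancelling multiset of $\pm1$'s, so their number is even; hence an odd cycle $C$ uses an odd number --- in particular at least one --- of bottom edges, and therefore meets a vertex of height $1$. (As a consistency check, deleting the bottom edges leaves a bipartite graph: two-colour by the parity of the layer, placing $u$ opposite layer $\ell$; so odd cycles genuinely require a bottom edge.)

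Now split into two cases according to whether $u\in V(C)$. If $u\in V(C)$, then $C$ decomposes into two internally disjoint paths joining $u$ to a height-$1$ vertex of $C$; since $h$ moves by at most $1$ per edge, each such path has length at least $\ell$, so $|C|\ge 2\ell$ and, being odd, $|C|\ge 2\ell+1$. If $u\notin V(C)$, then $C$ lies inside $M_\ell(C_{2k+1})-u=C_{\ell\times(2k+1)}$ together with the bottom edges. Here I would exhibit a graph homomorphism $f$ from this graph to $C_{2k+1}$ (with vertex set $\mathbb{Z}_{2k+1}$ and edges $\{m,m+1\}$), namely $f(v_{i,j})\equiv 3-2j-i \pmod{2k+1}$, and verify directly that vertical edges $v_{i,j}v_{i+1,j}$ map to $\{m,m-1\}$, slash edges $v_{i,j}v_{i+1,j-1}$ map to $\{m,m+1\}$, and bottom edges map to $\{m,m+1\}$ (the last using $-2k\equiv 1$). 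Since a homomorphism sends the cycle $C$ to a closed walk of the same (odd) length in $C_{2k+1}$, and every odd closed walk contains an odd cycle, we get $|C|\ge 2k+1$. Combining the two cases yields $|C|\ge\min\{2k+1,2\ell+1\}$, which together with the upper bound proves the proposition.

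The step I expect to be the real obstacle is the case $u\notin V(C)$: excluding short odd cycles that wander up and down the cylinder. The homomorphism onto $C_{2k+1}$ handles this in one stroke, but one has to discover the right labelling --- equivalently, to see that $C_{\ell\times(2k+1)}$ with its bottom $C_{2k+1}$ retracts onto that bottom cycle; the content is that the vertical and slash edges can be consistently oriented as single forward steps around $C_{2k+1}$, matching the winding imposed by the bottom edges. A more computational alternative would be to lift the column index to $\mathbb{Z}$ and bound $|C|$ via its winding number and its height excursions directly. I would also record the degenerate case $\ell=1$, where $M_1(C_{2k+1})$ is the wheel on $2k+2$ vertices and $\min\{2k+1,3\}=3$, and check that the arguments above still go through verbatim.
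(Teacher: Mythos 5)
Your proof is correct and complete. Note that the paper itself gives no proof of this proposition: it is dismissed as folklore, with a pointer that ``the main idea'' appears in the proof of Proposition~\ref{prop:Prop2} for $\widehat{BQ}(\ell,2k+1)$, where the lower bound is handled by splitting into the case that the cycle passes through $u$ and the case where one takes the ``natural image'' of the cycle onto the base layer. Your argument follows the same skeleton (exhibit the two candidate odd cycles of lengths $2k+1$ and $2\ell+1$; split on whether $u\in V(C)$) but makes the projection step fully rigorous: the explicit homomorphism $f(v_{i,j})\equiv 3-2j-i \pmod{2k+1}$ onto $C_{2k+1}$ is exactly the retraction the paper only gestures at, and it cleanly disposes of cycles avoiding $u$ via odd girth being monotone under homomorphisms. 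Your height-function parity argument (odd cycles must use a bottom edge, hence touch layer~1) is a nice self-contained substitute for the layer-parity/bipartiteness observation implicit in the paper, and the case $u\in V(C)$ is handled by the same distance-to-$u$ count the paper uses in Proposition~\ref{prop:Prop2}. All the computations you flag check out, including the bottom edges mapping to $\{m,m+1\}$ via $-2k\equiv 1$, the fact that $\gcd(k,2k+1)=1$ makes the added edges a single $(2k+1)$-cycle, and the degenerate case $\ell=1$ (the wheel), so there is no gap; if anything, your write-up supplies the proof the paper omits.
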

	
	Since this is a folklore fact, we do not provide a proof but we note that the main idea to verify it is also presented in the next proposition.
	
	\subsection{$\widehat{BQ}(\ell,2k+1)$}

	Next, given integers $\ell$ and $k$ satisfying $\ell \geq 2$ and $k \geq 1$, we define the signed bipartite graph  $\widehat{BQ}(\ell,2k+1)$ also from $C_{\ell \times (2k+1)}$ as follows.   
	
	\begin{itemize}
		\item Edges of $C_{\ell \times (2k+1)}$ are all negative.
		\item Connect $v_{_{1,j}}$ to $v_{_{2,j+k}}$ by a positive edge (Figure~\ref{fig:CylinderPlusMobuis}, right).  
		\item Add a new vertex $u$ and connect it to each of the vertices $v_{_{\ell,j}}$, $j=1,\ldots, 2k+1$, with a negative edge (Figure~\ref{fig:CylinderPlusMobuis}, left).
	\end{itemize}

	\begin{figure}[h]
		\centering
		
		\begin{minipage}{.4\textwidth}

			\scalebox{1.3}{
				\tikzset{math3d/.style= {x={(2cm,0cm)}, y={(0cm,2cm)}}}
				\begin{tikzpicture}[math3d]
					\newcommand{\n}{15}
					\newcommand{\h}{7}
					\newcommand{\ch}{12}
					\newcommand{\rl}{1}
					\newcommand{\rh}{1}
					
					\path[draw, gray, line width=0.05mm] plot[domain=0:2*pi,samples=4*\n] ({\rh*cos(\x r)}, {\rh*sin(\x r)}, 3.5);
					
					\foreach \i in{1,2}{
						\path[draw, gray] plot[domain=0:2*pi,samples=4*\n] ({\rh*cos(\x r)}, {\rh*sin(\x r)}, .5*\i);
					}
					
					\foreach \i in {3,...,6}{
						\path[draw, dashed, gray] plot[domain=0:2*pi,samples=4*\n] ({\rh*cos(\x r)}, {\rh*sin(\x r)}, .5*\i);

					}
					\foreach \i in {1,3,5}{
						\foreach \t in {1,...,\n} {
							
							\draw [thick, red, line width=0.15mm, opacity=0.2] ( ({(\rl)*cos((2*\t+1)*pi/\n r)},{\rl*sin((2*\t+1)*pi/\n r)},.5*\i) -- ({\rh*cos((2*\t+1)*pi/\n r-\ch)},{\rh*sin((2*\t+1)*pi/\n r-\ch)},.5*\i+.5)  -- cycle;
							
							\draw [ thick, red, line width=0.15mm, opacity=0.2] ( ({(\rl)*cos((2*\t+1)*pi/\n r)},{\rl*sin((2*\t+1)*pi/\n r)},.5*\i) -- ({\rh*cos((2*\t+1)*pi/\n r+\ch)},{\rh*sin((2*\t+1)*pi/\n r+\ch)},.5*\i+.5)  -- cycle;
						}

					}
					
					\foreach \i in {2,4,6}{
						\foreach \t in {1,...,\n} {
							
							\draw [ thick, red, line width=0.15mm, opacity=0.2] ( ({(\rl)*cos((2*\t+1)*pi/\n r+\ch)},{\rl*sin((2*\t+1)*pi/\n r+\ch)},.5*\i) -- ({\rh*cos((2*\t+1)*pi/\n r)},{\rh*sin((2*\t+1)*pi/\n r)},.5*\i+.5)  -- cycle;
							
							\draw [ thick, red, line width=0.15mm, opacity=0.2] ( ({(\rl)*cos((2*\t+1)*pi/\n r-\ch)},{\rl*sin((2*\t+1)*pi/\n r-\ch)},.5*\i) -- ({\rh*cos((2*\t+1)*pi/\n r)},{\rh*sin((2*\t+1)*pi/\n r)},.5*\i+.5)  -- cycle;
						}
					}

					\foreach \t in {1,...,\n} {
						
						\draw [ thick, red, line width=0.15mm, opacity=0.9] ( ({(cos((2*\t+14)*pi/\n r+\ch)},{sin((2*\t+14)*pi/\n r+\ch)},3.5) -- (-.5,-.5)  -- cycle;
						
					}
					\draw node[line width=0.05mm, circle, fill=white, minimum width=.5mm, inner sep=0mm, draw=black!80, minimum size=1mm] at (-.5,-.5) {};
					
						%
						%
					
				\end{tikzpicture}
			}
		\end{minipage}
		\begin{minipage}{.4\textwidth}
			\scalebox{1.3}{
				\tikzset{math3d/.style= {x={(2cm,0cm)}, y={(0cm,2cm)}}}
				\begin{tikzpicture}[math3d]
					\newcommand{\n}{15}
					\newcommand{\h}{7}
					\newcommand{\ch}{12}
					\newcommand{\rl}{1}
					\newcommand{\rh}{1}
					
					\foreach \i in{1,2}{
						\path[draw, gray] plot[domain=0:2*pi,samples=4*\n] ({\rh*cos(\x r)}, {\rh*sin(\x r)}, .5*\i);
					}
					
					\foreach \i in {3,...,\h}{
						\path[draw, dashed, gray] plot[domain=0:2*pi,samples=4*\n] ({\rh*cos(\x r)}, {\rh*sin(\x r)}, .5*\i);
						
					}
					\foreach \i in {1,3,5}{
						\foreach \t in {1,...,\n} {
							
							\draw [thick, red, opacity=0.2] ( ({(\rl)*cos((2*\t+1)*pi/\n r)},{\rl*sin((2*\t+1)*pi/\n r)},.5*\i) -- ({\rh*cos((2*\t+1)*pi/\n r-\ch)},{\rh*sin((2*\t+1)*pi/\n r-\ch)},.5*\i+.5)  -- cycle;
							
							\draw [ thick, red, opacity=0.2] ( ({(\rl)*cos((2*\t+1)*pi/\n r)},{\rl*sin((2*\t+1)*pi/\n r)},.5*\i) -- ({\rh*cos((2*\t+1)*pi/\n r+\ch)},{\rh*sin((2*\t+1)*pi/\n r+\ch)},.5*\i+.5)  -- cycle;
						}

					}
					
					\foreach \i in {2,4,6}{
						\foreach \t in {1,...,\n} {
							
							\draw [ thick, red, opacity=0.2] ( ({(\rl)*cos((2*\t+1)*pi/\n r+\ch)},{\rl*sin((2*\t+1)*pi/\n r+\ch)},.5*\i) -- ({\rh*cos((2*\t+1)*pi/\n r)},{\rh*sin((2*\t+1)*pi/\n r)},.5*\i+.5)  -- cycle;
							
							\draw [ thick, red, opacity=0.2] ( ({(\rl)*cos((2*\t+1)*pi/\n r-\ch)},{\rl*sin((2*\t+1)*pi/\n r-\ch)},.5*\i) -- ({\rh*cos((2*\t+1)*pi/\n r)},{\rh*sin((2*\t+1)*pi/\n r)},.5*\i+.5)  -- cycle;
						}
					}

					\foreach \t in {1,...,\n} {
						
						\draw [line width=0.075mm,  blue, opacity=0.9] ( ({(cos((2*\t+14)*pi/\n r+\ch)},{sin((2*\t+14)*pi/\n r+\ch)},.5) .. controls ({cos((2*\t+14)*pi/\n r+\ch)+cos((2*\t)*pi/\n r)}, {sin((2*\t+14)*pi/\n r+\ch)+sin((2*\t)*pi/\n r)})  .. ({cos((2*\t)*pi/\n r)},{sin((2*\t)*pi/\n r)},1);
						
					}

				\end{tikzpicture}
			}
		\end{minipage}
		\caption{Construction of $\widehat{BQ}(\ell,2k+1)$.}
		\label{fig:CylinderPlusMobuis}
	\end{figure}
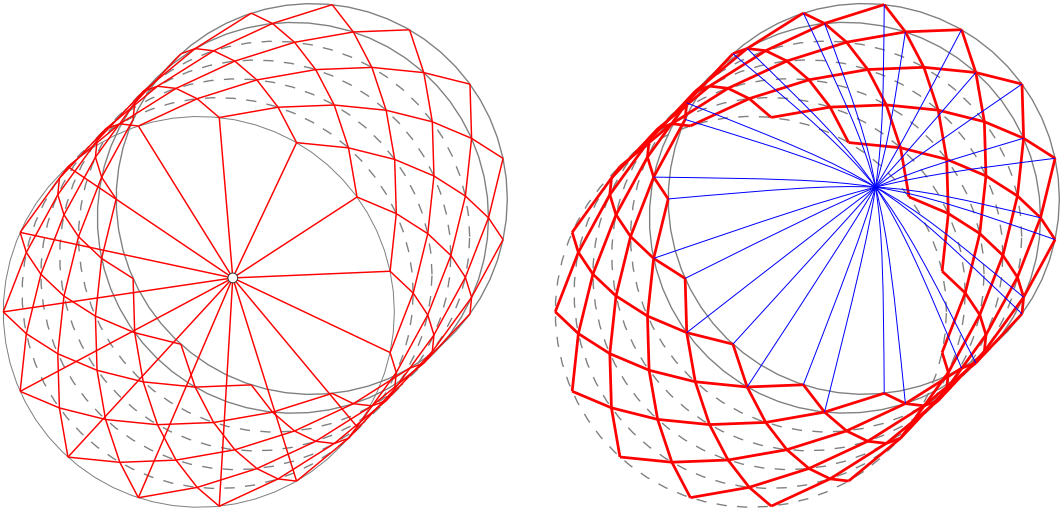

	We view this construction as one of the bipartite analogues of the generalized Mycielski. The second item of the construction, which is presented in Figure~\ref{fig:CylinderPlusMobuis} (right) is the main difference with the previously known constructions: While in construction of $M_{\ell}(C_{2k+1})$ we add some edges between vertices of the first layer, in this new construction we add some connection between vertices of the first layer and the second layer. Therefore this operation preserves the bipartition. The underlying graph of the induced subgraph on the first two layers is isomorphic to what is known as the M\"obuis ladder with $2k+1$ steps. We will refer to it as such.

	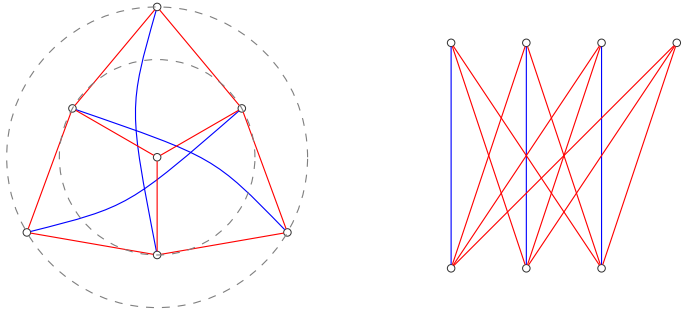
\begin{figure}[h]
		\centering
		\begin{minipage}{.4\textwidth}
			\centering
			\scalebox{1}{
				\begin{tikzpicture}
					
					\foreach \i in {1,2,3}
					{ \draw node[line width=0.05mm, circle, fill=white, minimum width=.5mm, inner sep=0mm, draw=black!80, minimum size=1mm]  (\i) at ({\i*120+30}:1.3){};}
					
					\foreach \i in {4,5,6}
					{ \draw node[line width=0.05mm, circle, fill=white, minimum width=.5mm, inner sep=0mm, draw=black!80, minimum size=1mm]  (\i) at ({\i*120+90}:2){};
					}
					
					\draw [ thick, red, line width=0.15mm, opacity=0.9] (1) -- (4);
					\draw [ thick, red, line width=0.15mm, opacity=0.9] (2) -- (5);
					\draw [ thick, red, line width=0.15mm, opacity=0.9] (3) -- (6);
					
					\draw [ thick, red, line width=0.15mm, opacity=0.9] (1) -- (6);
					\draw [ thick, red, line width=0.15mm, opacity=0.9] (2) -- (4);
					\draw [ thick, red, line width=0.15mm, opacity=0.9] (3) -- (5);
					
					\draw [ thick, blue, line width=0.15mm, opacity=0.9] (1) .. controls({0}:.76) .. (5);
					\draw [ thick, blue, line width=0.15mm, opacity=0.9] (2) .. controls({120}:.76) .. (6);
					\draw [ thick, blue, line width=0.15mm, opacity=0.9] (3) .. controls({240}:.76) .. (4);

					\draw  [ dashed, gray, line width=0.15mm, opacity=0.2] (0,0) circle (1.3cm);
					\draw  [ dashed, gray, line width=0.15mm, opacity=0.2] (0,0) circle (2cm);
					
					\draw node[line width=0.05mm, circle, fill=white, minimum width=.5mm, inner sep=0mm, draw=black!80, minimum size=1mm]  (7) at (0,0){};
					
					\draw [ thick, red, line width=0.15mm, opacity=0.9] (7) -- (1);
					\draw [ thick, red, line width=0.15mm, opacity=0.9] (7) -- (2);
					\draw [ thick, red, line width=0.15mm, opacity=0.9] (7) -- (3);
					
				\end{tikzpicture}
			}
		\end{minipage}
		\centering
		\begin{minipage}{.4\textwidth}
			\scalebox{1}{
				\begin{tikzpicture}
					
					\foreach \i in {1,2,3}
					{ \draw node[line width=0.05mm, circle, fill=white, minimum width=.5mm, inner sep=0mm, draw=black!80, minimum size=1mm]  (\i) at (\i, 0){};}
					
					\foreach \i in {4,5,6}
					{ \draw node[line width=0.05mm, circle, fill=white, minimum width=.5mm, inner sep=0mm, draw=black!80, minimum size=1mm]  (\i) at (\i-3,3){};
					}
					
					\draw [ thick, blue, line width=0.15mm, opacity=0.9] (1) -- (4);
					\draw [ thick, blue, line width=0.15mm, opacity=0.9] (2) -- (5);
					\draw [ thick, blue, line width=0.15mm, opacity=0.9] (3) -- (6);
					
					\draw [ thick, red, line width=0.15mm, opacity=0.9] (1) -- (6);
					\draw [ thick, red, line width=0.15mm, opacity=0.9] (2) -- (4);
					\draw [ thick, red, line width=0.15mm, opacity=0.9] (3) -- (5);
					
					\draw [ thick, red, line width=0.15mm, opacity=0.9] (1) -- (5);
					\draw [ thick, red, line width=0.15mm, opacity=0.9] (2) -- (6);
					\draw [ thick, red, line width=0.15mm, opacity=0.9] (3) -- (4);

					\draw node[line width=0.05mm, circle, fill=white, minimum width=.5mm, inner sep=0mm, draw=black!80, minimum size=1mm]  (7) at (4,3){};
					
					\draw [ thick, red, line width=0.15mm, opacity=0.9] (7) -- (1);
					\draw [ thick, red, line width=0.15mm, opacity=0.9] (7) -- (2);
					\draw [ thick, red, line width=0.15mm, opacity=0.9] (7) -- (3);
					
				\end{tikzpicture}
			}
		\end{minipage}
		
		\caption{$\widehat{BQ}(2,3)$, presented two different ways.}
		\label{fig:BM2,3}
	\end{figure}

	The case of $\widehat{BQ}(2,3)$ is $(K_{3,4},M)$ depicted in Figure~\ref{fig:BM2,3}. It is the signed bipartite graph where each one of the edges of a maximum matching of $K_{3,4}$ is assigned a positive sign and all the other edges are assigned a negative sign.

	\begin{figure}[h]
		\centering
		\begin{minipage}[t]{.4\textwidth}
			\centering
			\scalebox{1}{
				\begin{tikzpicture}

					\foreach \i in {1,2,3,4,5}
					{ \draw node[line width=0.05mm, circle, fill=white, minimum width=.5mm, inner sep=0mm, draw=black!80, minimum size=1mm]  (\i) at ({\i*72}:1.2){};}
					
					\foreach \i in {6,7,8,9,10}
					{ \draw node[line width=0.05mm, circle, fill=white, minimum width=.5mm, inner sep=0mm, draw=black!80, minimum size=1mm]  (\i) at ({\i*72+36}:2){};
					}
					
					\foreach \i in {11,12,13,14,15}
					{ \draw node[line width=0.05mm, circle, fill=white, minimum width=.5mm, inner sep=0mm, draw=black!80, minimum size=1mm]  (\i) at ({\i*72}:2.8){};}

					\draw [ thick, red, line width=0.15mm, opacity=0.9] (1) -- (6);
					\draw [ thick, red, line width=0.15mm, opacity=0.9] (2) -- (7);
					\draw [ thick, red, line width=0.15mm, opacity=0.9] (3) -- (8);
					\draw [ thick, red, line width=0.15mm, opacity=0.9] (4) -- (9);
					\draw [ thick, red, line width=0.15mm, opacity=0.9] (5) -- (10);
					
					\draw [ thick, red, line width=0.15mm, opacity=0.9] (1) -- (10);
					\draw [ thick, red, line width=0.15mm, opacity=0.9] (2) -- (6);
					\draw [ thick, red, line width=0.15mm, opacity=0.9] (3) -- (7);
					\draw [ thick, red, line width=0.15mm, opacity=0.9] (4) -- (8);
					\draw [ thick, red, line width=0.15mm, opacity=0.9] (5) -- (9);
					
					\draw [ thick, red, line width=0.15mm, opacity=0.9] (6) -- (11);
					\draw [ thick, red, line width=0.15mm, opacity=0.9] (7) -- (12);
					\draw [ thick, red, line width=0.15mm, opacity=0.9] (8) -- (13);
					\draw [ thick, red, line width=0.15mm, opacity=0.9] (9) -- (14);
					\draw [ thick, red, line width=0.15mm, opacity=0.9] (10) -- (15);
					
					\draw [ thick, red, line width=0.15mm, opacity=0.9] (6) -- (12);
					\draw [ thick, red, line width=0.15mm, opacity=0.9] (7) -- (13);
					\draw [ thick, red, line width=0.15mm, opacity=0.9] (8) -- (14);
					\draw [ thick, red, line width=0.15mm, opacity=0.9] (9) -- (15);
					\draw [ thick, red, line width=0.15mm, opacity=0.9] (10) -- (11);

					\draw  [ dashed, gray, line width=0.15mm, opacity=0.2] (0,0) circle (1.2cm);
					\draw  [ dashed, gray, line width=0.15mm, opacity=0.2] (0,0) circle (2cm);
					\draw  [ dashed, gray, line width=0.15mm, opacity=0.2] (0,0) circle (2.8cm);
					
					\draw node[line width=0.05mm, circle, fill=white, minimum width=.5mm, inner sep=0mm, draw=black!80, minimum size=1mm]  (16) at ({90}:0){};
					
					\draw [ thick, red, line width=0.15mm, opacity=0.9] (1) -- (16);
					\draw [ thick, red, line width=0.15mm, opacity=0.9] (2) -- (16);
					\draw [ thick, red, line width=0.15mm, opacity=0.9] (3) -- (16);
					\draw [ thick, red, line width=0.15mm, opacity=0.9] (4) -- (16);
					\draw [ thick, red, line width=0.15mm, opacity=0.9] (5) -- (16);
					
					\draw [ thick, blue, line width=0.15mm, opacity=0.9] (6) .. controls({0}:.29) .. (14);
					\draw [ thick, blue, line width=0.15mm, opacity=0.9] (7) .. controls({72}:.29) .. (15);
					\draw [ thick, blue, line width=0.15mm, opacity=0.9] (8) .. controls({144}:.29) .. (11);
					\draw [ thick, blue, line width=0.15mm, opacity=0.9] (9) .. controls({216}:.29) .. (12);
					\draw [ thick, blue, line width=0.15mm, opacity=0.9] (10) .. controls({288}:.29) .. (13);
					
				\end{tikzpicture}
			}
			\caption{$\widehat{BQ}(3,5)$.}
			\label{fig:BM3,5}
		\end{minipage}
		\centering
		\begin{minipage}[t]{.4\textwidth}
			\centering
			\scalebox{1}{
				\begin{tikzpicture}
					
					\foreach \i in {1,2,3,4,5,6,7}
					{ \draw node[line width=0.05mm, circle, fill=white, minimum width=.5mm, inner sep=0mm, draw=black!80, minimum size=1mm]  (\i) at ({\i*360/7}:1.2){};}
					
					\foreach \i in {8,9,10,11,12,13,14}
					{ \draw node[line width=0.05mm, circle, fill=white, minimum width=.5mm, inner sep=0mm, draw=black!80, minimum size=1mm]  (\i) at ({\i*360/7+180/7}:2){};
					}
					
					\foreach \i in {15,16,17,18,19,20,21}
					{ \draw node[line width=0.05mm, circle, fill=white, minimum width=.5mm, inner sep=0mm, draw=black!80, minimum size=1mm]  (\i) at ({\i*360/7}:2.8){};}
					
					\foreach \i in {22,23,24,25,26,27,28}
					{ \draw node[line width=0.05mm, circle, fill=white, minimum width=.5mm, inner sep=0mm, draw=black!80, minimum size=1mm]  (\i) at ({\i*360/7+180/7}:3.6){};}

					\draw [ thick, red, line width=0.15mm, opacity=0.9] (1) -- (8);
					\draw [ thick, red, line width=0.15mm, opacity=0.9] (2) -- (9);
					\draw [ thick, red, line width=0.15mm, opacity=0.9] (3) -- (10);
					\draw [ thick, red, line width=0.15mm, opacity=0.9] (4) -- (11);
					\draw [ thick, red, line width=0.15mm, opacity=0.9] (5) -- (12);
					\draw [ thick, red, line width=0.15mm, opacity=0.9] (6) -- (13);
					\draw [ thick, red, line width=0.15mm, opacity=0.9] (7) -- (14);
					
					\draw [ thick, red, line width=0.15mm, opacity=0.9] (1) -- (14);
					\draw [ thick, red, line width=0.15mm, opacity=0.9] (2) -- (8);
					\draw [ thick, red, line width=0.15mm, opacity=0.9] (3) -- (9);
					\draw [ thick, red, line width=0.15mm, opacity=0.9] (4) -- (10);
					\draw [ thick, red, line width=0.15mm, opacity=0.9] (5) -- (11);
					\draw [ thick, red, line width=0.15mm, opacity=0.9] (6) -- (12);
					\draw [ thick, red, line width=0.15mm, opacity=0.9] (7) -- (13);

					\draw [ thick, red, line width=0.15mm, opacity=0.9] (8) -- (15);
					\draw [ thick, red, line width=0.15mm, opacity=0.9] (9) -- (16);
					\draw [ thick, red, line width=0.15mm, opacity=0.9] (10) -- (17);
					\draw [ thick, red, line width=0.15mm, opacity=0.9] (11) -- (18);
					\draw [ thick, red, line width=0.15mm, opacity=0.9] (12) -- (19);
					\draw [ thick, red, line width=0.15mm, opacity=0.9] (13) -- (20);
					\draw [ thick, red, line width=0.15mm, opacity=0.9] (14) -- (21);

					\draw [ thick, red, line width=0.15mm, opacity=0.9] (8) -- (16);
					\draw [ thick, red, line width=0.15mm, opacity=0.9] (9) -- (17);
					\draw [ thick, red, line width=0.15mm, opacity=0.9] (10) -- (18);
					\draw [ thick, red, line width=0.15mm, opacity=0.9] (11) -- (19);
					\draw [ thick, red, line width=0.15mm, opacity=0.9] (12) -- (20);
					\draw [ thick, red, line width=0.15mm, opacity=0.9] (13) -- (21);
					\draw [ thick, red, line width=0.15mm, opacity=0.9] (14) -- (15);

					\draw [ thick, red, line width=0.15mm, opacity=0.9] (15) -- (22);
					\draw [ thick, red, line width=0.15mm, opacity=0.9] (16) -- (23);
					\draw [ thick, red, line width=0.15mm, opacity=0.9] (17) -- (24);
					\draw [ thick, red, line width=0.15mm, opacity=0.9] (18) -- (25);
					\draw [ thick, red, line width=0.15mm, opacity=0.9] (19) -- (26);
					\draw [ thick, red, line width=0.15mm, opacity=0.9] (20) -- (27);
					\draw [ thick, red, line width=0.15mm, opacity=0.9] (21) -- (28);
					
					\draw [ thick, red, line width=0.15mm, opacity=0.9] (15) -- (28);
					\draw [ thick, red, line width=0.15mm, opacity=0.9] (16) -- (22);
					\draw [ thick, red, line width=0.15mm, opacity=0.9] (17) -- (23);
					\draw [ thick, red, line width=0.15mm, opacity=0.9] (18) -- (24);
					\draw [ thick, red, line width=0.15mm, opacity=0.9] (19) -- (25);
					\draw [ thick, red, line width=0.15mm, opacity=0.9] (20) -- (26);
					\draw [ thick, red, line width=0.15mm, opacity=0.9] (21) -- (27);

					\draw  [ dashed, gray, line width=0.15mm, opacity=0.2] (0,0) circle (1.2cm);
					\draw  [ dashed, gray, line width=0.15mm, opacity=0.2] (0,0) circle (2cm);
					\draw  [ dashed, gray, line width=0.15mm, opacity=0.2] (0,0) circle (2.8cm);
					\draw  [ dashed, gray, line width=0.15mm, opacity=0.2] (0,0) circle (3.6cm);
					
					\draw node[line width=0.05mm, circle, fill=white, minimum width=.5mm, inner sep=0mm, draw=black!80, minimum size=1mm]  (29) at ({90}:0){};
					
					\draw [ thick, red, line width=0.15mm, opacity=0.9] (1) -- (29);
					\draw [ thick, red, line width=0.15mm, opacity=0.9] (2) -- (29);
					\draw [ thick, red, line width=0.15mm, opacity=0.9] (3) -- (29);
					\draw [ thick, red, line width=0.15mm, opacity=0.9] (4) -- (29);
					\draw [ thick, red, line width=0.15mm, opacity=0.9] (5) -- (29);
					\draw [ thick, red, line width=0.15mm, opacity=0.9] (6) -- (29);
					\draw [ thick, red, line width=0.15mm, opacity=0.9] (7) -- (29);
					
					\draw [ thick, blue, line width=0.15mm, opacity=0.9] (15) .. controls({0}:.79) .. (25);
					\draw [ thick, blue, line width=0.15mm, opacity=0.9] (16) .. controls({360/7}:.79) .. (26);
					\draw [ thick, blue, line width=0.15mm, opacity=0.9] (17) .. controls({2*360/7}:.79) .. (27);
					\draw [ thick, blue, line width=0.15mm, opacity=0.9] (18) .. controls({3*360/7}:.79) .. (28);
					\draw [ thick, blue, line width=0.15mm, opacity=0.9] (19) .. controls({4*360/7}:.79) .. (22);
					\draw [ thick, blue, line width=0.15mm, opacity=0.9] (20) .. controls({5*360/7}:.79) .. (23);
					\draw [ thick, blue, line width=0.15mm, opacity=0.9] (21) .. controls({6*360/7}:.79) .. (24);

				\end{tikzpicture}
			}
			\caption{$\widehat{BQ}(4,7)$.}
			\label{fig:BM4,7}
		\end{minipage}

	\end{figure}

	The fact that the underlying graph of $\widehat{BQ}(\ell,2k+1)$ is bipartite is easily observed. The parity of the levels gives a natural bipartition of the graph. We show that based on the choice of $k$ and $l$ this signed bipartite graph does not have a short negative cycle.
	
	\begin{proposition}\label{prop:Prop2}
		Given integers $l$ and $k$ where $l,k\geq 2$, the  length of the shortest negative cycle of $\widehat{BQ}(\ell,2k+1)$ is of length $\min\{2l, 2k+2\}$. 
	\end{proposition}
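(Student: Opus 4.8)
The plan is to bound separately the negative cycles that use the apex vertex $u$ and those that do not, and then to match each bound with an explicit negative cycle.

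Two preliminary remarks organize everything. First, set $\mathrm{lev}(u)=\ell+1$ and $\mathrm{lev}(v_{i,j})=i$; then \emph{every} edge of $\widehat{BQ}(\ell,2k+1)$ joins two consecutive levels (clear for the edges of $C_{\ell\times(2k+1)}$ and the apex edges, and the positive edges $v_{1,j}v_{2,j+k}$ join level $1$ to level $2$). In particular any walk between level $a$ and level $b$ has at least $|a-b|$ edges, and the positive edges form a perfect matching between level $1$ and level $2$. Second, since all non‑positive edges are negative, a cycle is negative iff it contains an odd number of positive edges.

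\emph{Upper bound.} Fix $j$. The walk $v_{1,j},v_{2,j-1},v_{1,j-1},v_{2,j-2},\dots,v_{1,j-k},v_{2,j-k-1}$ inside $C_{\ell\times(2k+1)}$ uses only levels $1,2$, has length $2k+1$, and ends at $v_{2,j+k}$ because $j-k-1\equiv j+k\pmod{2k+1}$; closing it with the positive edge $v_{1,j}v_{2,j+k}$ gives a negative cycle of length $2k+2$. Also $u,v_{\ell,j},v_{\ell-1,j},\dots,v_{1,j},v_{2,j+k},v_{3,j+k},\dots,v_{\ell,j+k},u$ is a cycle of length $2\ell$ with exactly one positive edge, hence negative. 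So the negative girth is at most $\min\{2\ell,2k+2\}$.

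\emph{Lower bound.} Let $C$ be a negative cycle. If $C$ passes through $u$, then $C-u$ is a path between two level‑$\ell$ vertices containing an odd number of positive edges, so it uses a positive edge and hence visits level $1$; thus it has length at least $(\ell-1)+(\ell-1)$ and $|C|\ge 2\ell$. Otherwise $C$ lies in $C_{\ell\times(2k+1)}$ together with the positive matching. Pass to the cyclic cover $\widehat{BQ}^{\infty}$ obtained by lifting the second index to $\mathbb{Z}$ (deck transformation $\tau:(i,m)\mapsto(i,m+2k+1)$); then $C$ lifts to a walk from $(i_0,m_0)$ to $(i_0,m_0+w(2k+1))$, where $w\in\mathbb{Z}$ is the winding number of $C$. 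In the coordinates $(s,t)=(i+m,m)$, each lifted edge of $C_{\ell\times(2k+1)}$ changes exactly one of $s,t$ by $\pm1$, while each lifted positive edge changes $(s,t)$ by $\pm(k+1,k)$. Let $d'$ be the signed number of positive edges of $C$ (those traversed from level $1$ to level $2$ counted $+1$, the others $-1$); since $C$ is negative, $d'$ is odd. The positive edges contribute $d'(k+1,k)$ to the total displacement $(w(2k+1),w(2k+1))$, so $C$ has at least $|d'|$ positive edges and at least $|w(2k+1)-d'(k+1)|+|w(2k+1)-d'k|$ other edges, whence
\[
|C|\ \ge\ |d'|+\bigl|w(2k+1)-d'(k+1)\bigr|+\bigl|w(2k+1)-d'k\bigr|.
\]
A short case check (the right‑hand side equals $2k+2$ only for $d'=\pm1$ with the appropriate $w$, and is at least $2k+4$ as soon as $|d'|\ge 3$) shows it is always at least $2k+2$. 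Combining the two cases, every negative cycle has length at least $\min\{2\ell,2k+2\}$, which with the constructions above proves the statement.

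The only real obstacle is the lower bound for cycles avoiding $u$: one must pass to the cover and use the winding number, and the coordinate change $(i,j)\mapsto(i+j,j)$ is what makes the staircase edges cost exactly one unit in a single coordinate, so that the displacement forced by the positive matching can be correctly charged. The remaining steps are elementary bookkeeping.
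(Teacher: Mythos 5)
Your proposal is correct, and your upper bound uses exactly the two cycles the paper exhibits, but your lower bound takes a genuinely different route. The paper first reduces to cycles with a single positive edge: if a negative cycle used three or more, two positive edges consecutive along the cycle together with the subpath between them could be replaced by a path inside the first two layers, yielding a negative closed walk shorter by two, a contradiction; then such a cycle either passes through $u$ (length at least $2\ell$) or its natural projection onto the cycle formed by the first two layers joins the ends of its unique positive edge, forcing at least $2k+1$ further edges. You instead split only on whether $u$ is used (your level-function argument for that case is in the same spirit as the paper's) and, for cycles avoiding $u$, pass to the infinite cyclic cover and count displacements in the sheared coordinates $(i+m,m)$, getting $|C|\ge |d'|+\bigl|w(2k+1)-d'(k+1)\bigr|+\bigl|w(2k+1)-d'k\bigr|$ with $d'$ odd. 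Your ``short case check'' is right but deserves one line, since the trivial estimate $\bigl|wN-d'(k+1)\bigr|+\bigl|wN-d'k\bigr|\ge |d'|$ does not suffice for $3\le |d'|\le k$: for odd $1\le |d'|\le 2k-1$ the interval $[|d'|k,|d'|(k+1)]$ contains no multiple of $2k+1$, and the nearest multiple is at distance $\tfrac{2k+1-|d'|}{2}$, so the minimum of the right-hand side is $|d'|+2k+1\ge 2k+2$, while for $|d'|\ge 2k+1$ it is at least $2|d'|$. Your approach handles any number of positive edges uniformly, without the paper's exchange argument, at the cost of the covering-space setup; the paper's argument is more elementary and local. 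One small repair: a cycle is negative iff it has an odd number of \emph{negative} edges, so your statement ``negative iff an odd number of positive edges'' is justified not by ``all other edges are negative'' but by the fact that all cycles here are even (bipartiteness), which fortunately follows immediately from your own observation that every edge joins consecutive levels.
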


	\begin{proof}
		We first present two natural choices for a negative cycle, one of length $2k$ and another of length $2l$. The first is a negative cycle on the first two layers. Take a positive edge and connect its two ends with one of the two paths using only the negative edges that connect the two layers. This would result in  a negative cycle of length $2k+2$. The second negative cycle we consider is by taking a positive edge and connecting each of its ends to the vertex $u$ by a shortest path (all edges negative). One of these paths will be of length $l$ and the other would be of length $l-1$. Together with the first chosen edge itself then, they form a negative cycle of length $2l$. 
		
		It remains to show that the shortest of these two types of cycles gives us the negative girth. To that end, we will first show that a shortest negative cycle can only use one positive edge of $\widehat{BQ}(\ell,2k+1)$. Towards a contradiction, let $C$ be a negative cycle with more than two positive edges.  We aim to present a negative cycle $C'$ whose length is at most $|C|-2$. We take two positive edges of $C$ that come consecutively on the cyclic order. Assume $xy$ and $x'y'$ are these two edges and that $x'$ is followed by $y$ in the cyclic order of $C$ (that is to say, there is no positive edge in the $x'-y$ path in $C$). We remove the two positive edges $xy$ and $x'y'$ and the $x'y$ path connecting them in $C$, then we add a $xy'$ path in the first two layers which is not part of the cycle $C$ (which also has no positive edge). The result is a closed walk whose sign is the same as that of $C$, and whose length is $|C|-2$. But then this closed walk must contain a negative cycle, whose length then is also at most $|C|-2$, a contradiction.
		
		Finally, if $C$ is a cycle that uses exactly one negative edge, say $xy$, then the $x-y$ path $P_{xy}=C-{xy}$ either passes through $u$ in which case we have at least $2l$ edges in $C$, or the natural image of $P_{xy}$ to the cycle in between the first and second layers also connects $x$ to $y$. But the shortest such path is of length $2k+1$, thus $P_{xy}$ is of length at least $2k+1$, and the negative cycle is of length at least $2k+2$. 
	\end{proof}

	\subsection{$\widehat{BQ}(\ell,2k)$}	
	
	The third family of (signed) graphs we consider in this work are built quite similar to the previous construction. More precisely, given integers $\ell$ and $k$ satisfying $\ell, k \geq 2$, we define the (signed) graph  $\widehat{BQ}(\ell,2k)$ from $C_{\ell \times (2k)}$ as follows.   
	
	\begin{itemize}
		\item Edges of $C_{\ell \times 2k}$ are all negative.
		\item Connect $v_{_{1,j}}$ to $v_{_{1,j+k}}$  and $v_{_{2,j}}$ to $v_{_{2,j+k}}$  by  negative edges (Figure~\ref{fig:CylinderPlusMobuis2k}, right).
		\item Add a new vertex $u$ and connect it to each of the vertices $v_{_{\ell,j}}$, $j=1,\ldots, 2k$, with a negative edge (Figure~\ref{fig:CylinderPlusMobuis2k}, left).
	\end{itemize}
	
	\begin{figure}[h]
		\centering
		\begin{minipage}{.4\textwidth}

			\scalebox{1.3}{
				\tikzset{math3d/.style= {x={(2cm,0cm)}, y={(0cm,2cm)}}}
				\begin{tikzpicture}[math3d]
					\newcommand{\n}{16}
					\newcommand{\h}{7}
					\newcommand{\ch}{11}
					\newcommand{\rl}{1}
					\newcommand{\rh}{1}
					
					\path[draw, gray, line width=0.05mm] plot[domain=0:2*pi,samples=4*\n] ({\rh*cos(\x r)}, {\rh*sin(\x r)}, 3.5);
					
					\foreach \i in{1,2}{
						\path[draw, gray] plot[domain=0:2*pi,samples=4*\n] ({\rh*cos(\x r)}, {\rh*sin(\x r)}, .5*\i);
					}
					
					\foreach \i in {3,...,6}{
						\path[draw, dashed, gray] plot[domain=0:2*pi,samples=4*\n] ({\rh*cos(\x r)}, {\rh*sin(\x r)}, .5*\i);
					}
					
					\foreach \i in {1,3,5}{
						\foreach \t in {1,...,\n} {
							
							\draw [thick, red, line width=0.15mm, opacity=0.2] ( ({(\rl)*cos((2*\t+1)*pi/\n r)},{\rl*sin((2*\t+1)*pi/\n r)},.5*\i) -- ({\rh*cos((2*\t+1)*pi/\n r-\ch)},{\rh*sin((2*\t+1)*pi/\n r-\ch)},.5*\i+.5)  -- cycle;
							
							\draw [ thick, red, line width=0.15mm, opacity=0.2] ( ({(\rl)*cos((2*\t+1)*pi/\n r)},{\rl*sin((2*\t+1)*pi/\n r)},.5*\i) -- ({\rh*cos((2*\t+1)*pi/\n r+\ch)},{\rh*sin((2*\t+1)*pi/\n r+\ch)},.5*\i+.5)  -- cycle;
						}
					}
					
					\foreach \i in {2,4,6}{
						\foreach \t in {1,...,\n} {
							
							\draw [ thick, red, line width=0.15mm, opacity=0.2] ( ({(\rl)*cos((2*\t+1)*pi/\n r+\ch)},{\rl*sin((2*\t+1)*pi/\n r+\ch)},.5*\i) -- ({\rh*cos((2*\t+1)*pi/\n r)},{\rh*sin((2*\t+1)*pi/\n r)},.5*\i+.5)  -- cycle;
							
							\draw [ thick, red, line width=0.15mm, opacity=0.2] ( ({(\rl)*cos((2*\t+1)*pi/\n r-\ch)},{\rl*sin((2*\t+1)*pi/\n r-\ch)},.5*\i) -- ({\rh*cos((2*\t+1)*pi/\n r)},{\rh*sin((2*\t+1)*pi/\n r)},.5*\i+.5)  -- cycle;
						}
					} 
					
					\foreach \t in {1,...,\n} {
						\draw [ thick, red, line width=0.15mm, opacity=0.9] ( ({(cos((2*\t+14)*pi/\n r+\ch)},{sin((2*\t+14)*pi/\n r+\ch)},3.5) -- (-.5,-.5)  -- cycle;
					}
					\draw node[line width=0.05mm, circle, fill=white, minimum width=.5mm, inner sep=0mm, draw=black!80, minimum size=1mm] at (-.5,-.5) {};
					
				\end{tikzpicture}
			}
		\end{minipage}
		\begin{minipage}{.4\textwidth}
			\scalebox{1.3}{
				\tikzset{math3d/.style= {x={(2cm,0cm)}, y={(0cm,2cm)}}}
				\begin{tikzpicture}[math3d]
					\newcommand{\n}{16}
					\newcommand{\m}{8}
					\newcommand{\h}{7}
					\newcommand{\ch}{11}
					\newcommand{\rl}{1}
					\newcommand{\rh}{1}
					
					\foreach \i in{1,2}{
						\path[draw, gray] plot[domain=0:2*pi,samples=4*\n] ({\rh*cos(\x r)}, {\rh*sin(\x r)}, .5*\i);
					}
					
					\foreach \i in {3,...,\h}{
						\path[draw, dashed, gray] plot[domain=0:2*pi,samples=4*\n] ({\rh*cos(\x r)}, {\rh*sin(\x r)}, .5*\i);
						
					}
					\foreach \i in {1,3,5}{
						\foreach \t in {1,...,\n} {
							
							\draw [thick, red, opacity=0.2] ( ({(\rl)*cos((2*\t+1)*pi/\n r)},{\rl*sin((2*\t+1)*pi/\n r)},.5*\i) -- ({\rh*cos((2*\t+1)*pi/\n r-\ch)},{\rh*sin((2*\t+1)*pi/\n r-\ch)},.5*\i+.5)  -- cycle;
							
							\draw [ thick, red, opacity=0.2] ( ({(\rl)*cos((2*\t+1)*pi/\n r)},{\rl*sin((2*\t+1)*pi/\n r)},.5*\i) -- ({\rh*cos((2*\t+1)*pi/\n r+\ch)},{\rh*sin((2*\t+1)*pi/\n r+\ch)},.5*\i+.5)  -- cycle;
						}
					}
					
					\foreach \i in {2,4,6}{
						\foreach \t in {1,...,\n} {
							
							\draw [ thick, red, opacity=0.2] ( ({(\rl)*cos((2*\t+1)*pi/\n r+\ch)},{\rl*sin((2*\t+1)*pi/\n r+\ch)},.5*\i) -- ({\rh*cos((2*\t+1)*pi/\n r)},{\rh*sin((2*\t+1)*pi/\n r)},.5*\i+.5)  -- cycle;
							
							\draw [ thick, red, opacity=0.2] ( ({(\rl)*cos((2*\t+1)*pi/\n r-\ch)},{\rl*sin((2*\t+1)*pi/\n r-\ch)},.5*\i) -- ({\rh*cos((2*\t+1)*pi/\n r)},{\rh*sin((2*\t+1)*pi/\n r)},.5*\i+.5)  -- cycle;
						}
					}

					\foreach \t in {1,...,\m} {
						
						\draw [line width=0.15mm,  red, opacity=0.9] ( ({cos((2*\t+15)*pi/\n r+\ch)},{sin((2*\t+15)*pi/\n r+\ch)},1) .. controls
						({cos((2*\t+15)*pi/\n r+\ch)+cos((2*\t)*pi/\n r)},
						{sin((2*\t+15)*pi/\n r+\ch)+sin((2*\t)*pi/\n r)},0.5) .. ({cos((2*\t)*pi/\n r)},{sin((2*\t)*pi/\n r)},1);
						
					}
					
					\foreach \t in {1,2,...,\m} {
						
						\draw [line width=0.15mm,  red, opacity=0.9] ({cos((2*\t+16)*pi/\n r+\ch)},{sin((2*\t+16)*pi/\n r+\ch)},0.5)      .. controls
						({cos((2*\t+15)*pi/\n r+\ch)+cos((2*\t)*pi/\n r)},
						{sin((2*\t+15)*pi/\n r+\ch)+sin((2*\t)*pi/\n r)},0) ..
						({cos((2*\t+1)*pi/\n r)},{sin((2*\t+1)*pi/\n r)},0.5);
						
					}

				\end{tikzpicture}
			}
			
		\end{minipage}
		\caption{Construction of $\widehat{BQ}(\ell,2k)$.}
		\label{fig:CylinderPlusMobuis2k}
		
	\end{figure}
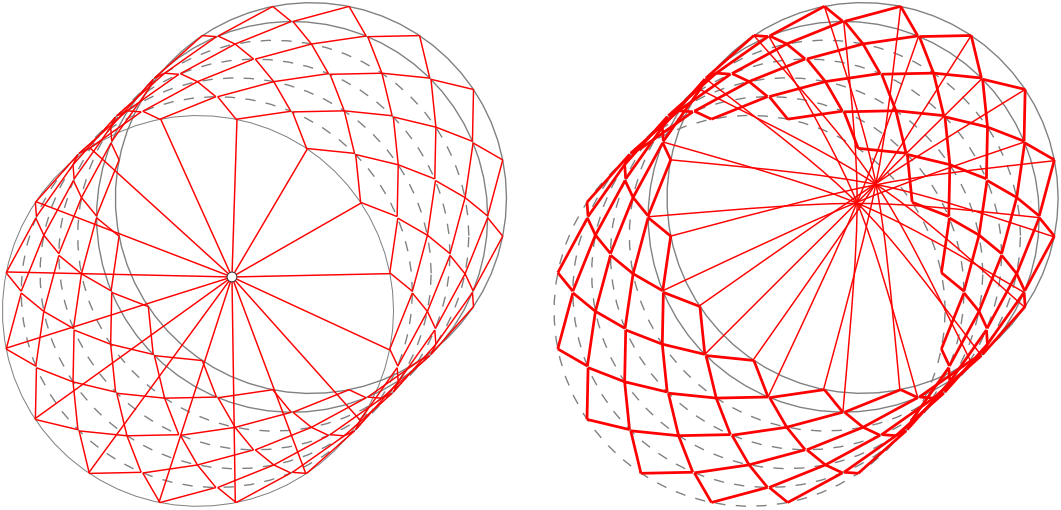
	
	As an example, the (signed) graphs $\widehat{BQ}(3,4)$ and $\widehat{BQ}(4,6)$ are depicted in Figures~\ref{fig:BM3,4} and \ref{fig:BM4,6} respectively.

	\begin{figure}[h]
		\begin{minipage}[t]{.4\textwidth}
			\centering
			\scalebox{1}{
				\begin{tikzpicture}
					
					\foreach \i in {1,2,3,4}
					{ \draw node[line width=0.05mm, circle, fill=white, minimum width=.5mm, inner sep=0mm, draw=black!80, minimum size=1mm]  (\i) at ({\i*90}:1.2){};}
					
					\foreach \i in {5,6,7,8}
					{ \draw node[line width=0.05mm, circle, fill=white, minimum width=.5mm, inner sep=0mm, draw=black!80, minimum size=1mm]  (\i) at ({\i*90+45}:2){};
					}
					
					\foreach \i in {9,10,11,12}
					{ \draw node[line width=0.05mm, circle, fill=white, minimum width=.5mm, inner sep=0mm, draw=black!80, minimum size=1mm]  (\i) at ({\i*90}:2.8){};}
					
					\draw node[line width=0.05mm, circle, fill=white, minimum width=.5mm, inner sep=0mm, draw=black!80, minimum size=1mm]  (0) at ({90}:0){};

					\draw [ thick, red, line width=0.15mm, opacity=0.9] (1) -- (0);
					\draw [ thick, red, line width=0.15mm, opacity=0.9] (2) -- (0);
					\draw [ thick, red, line width=0.15mm, opacity=0.9] (3) -- (0);
					\draw [ thick, red, line width=0.15mm, opacity=0.9] (4) -- (0);
					
					\draw [ thick, red, line width=0.15mm, opacity=0.9] (1) -- (5);
					\draw [ thick, red, line width=0.15mm, opacity=0.9] (2) -- (6);
					\draw [ thick, red, line width=0.15mm, opacity=0.9] (3) -- (7);
					\draw [ thick, red, line width=0.15mm, opacity=0.9] (4) -- (8);
					
					\draw [ thick, red, line width=0.15mm, opacity=0.9] (1) -- (8);
					\draw [ thick, red, line width=0.15mm, opacity=0.9] (2) -- (5);
					\draw [ thick, red, line width=0.15mm, opacity=0.9] (3) -- (6);
					\draw [ thick, red, line width=0.15mm, opacity=0.9] (4) -- (7);

					\draw [ thick, red, line width=0.15mm, opacity=0.9] (5) -- (9);
					\draw [ thick, red, line width=0.15mm, opacity=0.9] (6) -- (10);
					\draw [ thick, red, line width=0.15mm, opacity=0.9] (7) -- (11);
					\draw [ thick, red, line width=0.15mm, opacity=0.9] (8) -- (12);

					\draw [ thick, red, line width=0.15mm, opacity=0.9] (5) -- (10);
					\draw [ thick, red, line width=0.15mm, opacity=0.9] (6) -- (11);
					\draw [ thick, red, line width=0.15mm, opacity=0.9] (7) -- (12);
					\draw [ thick, red, line width=0.15mm, opacity=0.9] (8) -- (9);

					\draw  [ dashed, gray, line width=0.15mm, opacity=0.2] (0,0) circle (1.2cm);
					\draw  [ dashed, gray, line width=0.15mm, opacity=0.2] (0,0) circle (2cm);
					\draw  [ dashed, gray, line width=0.15mm, opacity=0.2] (0,0) circle (2.8cm);

					
					\draw [ thick, red, line width=0.15mm, opacity=0.9] (5) .. controls({0}:.49) .. (7);
					\draw [ thick, red, line width=0.15mm, opacity=0.9] (6) .. controls({72}:.49) .. (8);
					\draw [ thick, red, line width=0.15mm, opacity=0.9] (9) .. controls({144}:.49) .. (11);
					\draw [ thick, red, line width=0.15mm, opacity=0.9] (10) .. controls({216}:.49) .. (12);
					
				\end{tikzpicture}
			}
			\caption{$\widehat{BQ}(3,4)$.}
			\label{fig:BM3,4}
		\end{minipage}
		\centering
		\begin{minipage}[t]{.4\textwidth}
			\centering
			\scalebox{1}{
				\begin{tikzpicture}

					\foreach \i in {1,2,3,4,5,6}
					{ \draw node[line width=0.05mm, circle, fill=white, minimum width=.5mm, inner sep=0mm, draw=black!80, minimum size=1mm]  (\i) at ({\i*360/6}:1.2){};}
					
					\foreach \i in {7,8,9,10,11,12}
					{ \draw node[line width=0.05mm, circle, fill=white, minimum width=.5mm, inner sep=0mm, draw=black!80, minimum size=1mm]  (\i) at ({\i*360/6+180/6}:2){};
					}
					
					\foreach \i in {13,14,15,16,17,18}
					{ \draw node[line width=0.05mm, circle, fill=white, minimum width=.5mm, inner sep=0mm, draw=black!80, minimum size=1mm]  (\i) at ({\i*360/6}:2.8){};}
					
					\foreach \i in {19,20,21,22,23,24}
					{ \draw node[line width=0.05mm, circle, fill=white, minimum width=.5mm, inner sep=0mm, draw=black!80, minimum size=1mm]  (\i) at ({\i*360/6+180/6}:3.6){};}

					\draw [ thick, red, line width=0.15mm, opacity=0.9] (1) -- (7);
					\draw [ thick, red, line width=0.15mm, opacity=0.9] (2) -- (8);
					\draw [ thick, red, line width=0.15mm, opacity=0.9] (3) -- (9);
					\draw [ thick, red, line width=0.15mm, opacity=0.9] (4) -- (10);
					\draw [ thick, red, line width=0.15mm, opacity=0.9] (5) -- (11);
					\draw [ thick, red, line width=0.15mm, opacity=0.9] (6) -- (12);

					\draw [ thick, red, line width=0.15mm, opacity=0.9] (1) -- (12);
					\draw [ thick, red, line width=0.15mm, opacity=0.9] (2) -- (7);
					\draw [ thick, red, line width=0.15mm, opacity=0.9] (3) -- (8);
					\draw [ thick, red, line width=0.15mm, opacity=0.9] (4) -- (9);
					\draw [ thick, red, line width=0.15mm, opacity=0.9] (5) -- (10);
					\draw [ thick, red, line width=0.15mm, opacity=0.9] (6) -- (11);

					\draw [ thick, red, line width=0.15mm, opacity=0.9] (7) -- (13);
					\draw [ thick, red, line width=0.15mm, opacity=0.9] (8) -- (14);
					\draw [ thick, red, line width=0.15mm, opacity=0.9] (9) -- (15);
					\draw [ thick, red, line width=0.15mm, opacity=0.9] (10) -- (16);
					\draw [ thick, red, line width=0.15mm, opacity=0.9] (11) -- (17);
					\draw [ thick, red, line width=0.15mm, opacity=0.9] (12) -- (18);

					\draw [ thick, red, line width=0.15mm, opacity=0.9] (7) -- (14);
					\draw [ thick, red, line width=0.15mm, opacity=0.9] (8) -- (15);
					\draw [ thick, red, line width=0.15mm, opacity=0.9] (9) -- (16);
					\draw [ thick, red, line width=0.15mm, opacity=0.9] (10) -- (17);
					\draw [ thick, red, line width=0.15mm, opacity=0.9] (11) -- (18);
					\draw [ thick, red, line width=0.15mm, opacity=0.9] (12) -- (13);

					\draw [ thick, red, line width=0.15mm, opacity=0.9] (13) -- (19);
					\draw [ thick, red, line width=0.15mm, opacity=0.9] (13) -- (24);
					\draw [ thick, red, line width=0.15mm, opacity=0.9] (14) -- (20);
					\draw [ thick, red, line width=0.15mm, opacity=0.9] (14) -- (19);
					\draw [ thick, red, line width=0.15mm, opacity=0.9] (15) -- (21);
					\draw [ thick, red, line width=0.15mm, opacity=0.9] (15) -- (20);
					\draw [ thick, red, line width=0.15mm, opacity=0.9] (16) -- (22);
					\draw [ thick, red, line width=0.15mm, opacity=0.9] (16) -- (21);
					\draw [ thick, red, line width=0.15mm, opacity=0.9] (17) -- (23);
					\draw [ thick, red, line width=0.15mm, opacity=0.9] (17) -- (22);
					\draw [ thick, red, line width=0.15mm, opacity=0.9] (18) -- (24);
					\draw [ thick, red, line width=0.15mm, opacity=0.9] (18) -- (23);
					\draw  [ dashed, gray, line width=0.15mm, opacity=0.2] (0,0) circle (1.2cm);
					\draw  [ dashed, gray, line width=0.15mm, opacity=0.2] (0,0) circle (2cm);
					\draw  [ dashed, gray, line width=0.15mm, opacity=0.2] (0,0) circle (2.8cm);
					\draw  [ dashed, gray, line width=0.15mm, opacity=0.2] (0,0) circle (3.6cm);
					
					\draw node[line width=0.05mm, circle, fill=white, minimum width=.5mm, inner sep=0mm, draw=black!80, minimum size=1mm]  (29) at ({90}:0){};
					
					\draw [ thick, red, line width=0.15mm, opacity=0.9] (1) -- (29);
					\draw [ thick, red, line width=0.15mm, opacity=0.9] (2) -- (29);
					\draw [ thick, red, line width=0.15mm, opacity=0.9] (3) -- (29);
					\draw [ thick, red, line width=0.15mm, opacity=0.9] (4) -- (29);
					\draw [ thick, red, line width=0.15mm, opacity=0.9] (5) -- (29);
					\draw [ thick, red, line width=0.15mm, opacity=0.9] (6) -- (29);

					\draw [ thick, red, line width=0.15mm, opacity=0.9] (15) .. controls({4*360/12}:-.79) .. (18);
					\draw [ thick, red, line width=0.15mm, opacity=0.9] (16) .. controls({5*360/12}:.7) .. (13);
					\draw [ thick, red, line width=0.15mm, opacity=0.9] (17) .. controls({6*360/12}:-.79) .. (14);
					\draw [ thick, red, line width=0.15mm, opacity=0.9] (19) .. controls({1*360/6}:.79) .. (22);
					\draw [ thick, red, line width=0.15mm, opacity=0.9] (20) .. controls({2*360/6}:-.79) .. (23);
					\draw [ thick, red, line width=0.15mm, opacity=0.9] (21) .. controls({3*360/6}:.79) .. (24);

				\end{tikzpicture}
			}
			\caption{$\widehat{BQ}(4,6)$.}
			\label{fig:BM4,6}
		\end{minipage}
	\end{figure}

	\begin{proposition}
		Given integers $l$ and $k$, where $l,k \geq2$, the shortest negative cycle of $\widehat{BQ}(l, 2k)$ is of length $\min\{2l-1, 2k+1\}$.
	\end{proposition}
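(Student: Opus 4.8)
The plan is to follow the pattern of the proof of Proposition~\ref{prop:Prop2}, with the \emph{diameter edges} $v_{_{1,j}}v_{_{1,j+k}}$ and $v_{_{2,j}}v_{_{2,j+k}}$ now playing the role that the positive edges played there. Since every edge of $\widehat{BQ}(\ell,2k)$ is negative, a cycle is negative exactly when it has odd length, so below ``negative cycle'' simply means ``odd cycle''.

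For the upper bound: the two ends of the diameter edge $v_{_{1,1}}v_{_{1,1+k}}$ are antipodal on the $4k$-cycle $v_{_{1,1}}v_{_{2,1}}v_{_{1,2}}v_{_{2,2}}\cdots v_{_{1,2k}}v_{_{2,2k}}$ formed by the edges of $C_{\ell\times 2k}$ between the first two layers, so adding that diameter edge to an arc of that cycle joining its ends (of length $2k$) produces a negative cycle of length $2k+1$; and joining the two ends of $v_{_{2,1}}v_{_{2,1+k}}$ to $u$ along the all-negative paths $v_{_{2,1}}v_{_{3,1}}\cdots v_{_{\ell,1}}u$ and $v_{_{2,1+k}}v_{_{3,1+k}}\cdots v_{_{\ell,1+k}}u$ produces a negative cycle of length $1+(\ell-1)+(\ell-1)=2\ell-1$. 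Hence the negative girth is at most $\min\{2\ell-1,2k+1\}$.

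For the lower bound, note that the graph obtained from $C_{\ell\times 2k}$ by adding $u$ and its incident edges is bipartite, with the two parts determined by the parity of the layer index ($u$ placed opposite layer $\ell$); consequently the diameter edges are precisely the edges of $\widehat{BQ}(\ell,2k)$ with both ends in the same part, and every odd cycle uses an odd, hence positive, number of them. I will also use that the diameter edges are pairwise vertex-disjoint and that $\phi\colon v_{_{i,j}}\mapsto v_{_{i,j+k}}$, $u\mapsto u$ (indices mod $2k$) is an automorphism of $\widehat{BQ}(\ell,2k)$ which permutes the diameter edges, interchanging the two ends of each, and hence permutes the non-diameter edges as well. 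Let $C$ be a shortest negative cycle. Suppose first that $C$ uses exactly one diameter edge $e$, whose ends lie on a common layer $s\in\{1,2\}$; then $P:=C-e$ joins them using only edges of $C_{\ell\times 2k}$ and edges at $u$. If $P$ avoids $u$ it is a walk in $C_{\ell\times 2k}$ from layer $s$ back to layer $s$, so it has even length $2a$, with $a$ steps to the next higher layer (each changing the column by $0$ or $-1$) and $a$ steps to the next lower layer (each changing it by $0$ or $+1$); thus the net column change lies in $[-a,a]$, and as it must be $\equiv k\pmod{2k}$ (the ends of $e$ differing by $k$), we get $a\ge k$, so $|C|=2a+1\ge 2k+1$. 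If instead $P$ passes through $u$, it climbs from layer $s$ to layer $\ell$, crosses $u$, and descends back to layer $s$, using at least $2(\ell-s)+2\ge 2\ell-2$ edges, so $|C|\ge 2\ell-1$. In either case $|C|\ge\min\{2\ell-1,2k+1\}$.

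It remains to rule out that a shortest negative cycle $C$ uses three or more diameter edges. Choose two diameter edges $e_1=x_1y_1$ and $e_2=x_2y_2$ consecutive along $C$, so that one of the two arcs of $C$ between them, say the arc $A$ from $y_1$ to $x_2$, contains no diameter edge; let $R$ denote the complementary $y_2$--$x_1$ arc, so $|C|=|A|+|R|+2$ and $|R|\ge 1$ (the $e_i$ being disjoint). Now $\phi(A)$ is a path of length $|A|$ from $\phi(y_1)=x_1$ to $\phi(x_2)=y_2$ with no diameter edge, so $W:=\phi(A)\cup R$ is a closed walk of odd length $|C|-2$, and as such it contains a negative cycle shorter than $C$ --- contradicting the choice of $C$. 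Together with the upper bound this yields negative girth exactly $\min\{2\ell-1,2k+1\}$. The one step I expect to be genuinely delicate is this last surgery: a priori the arc $A$ may run far up towards $u$, so no short replacement of it need exist within the first two layers --- invoking the automorphism $\phi$ circumvents this, since $\phi(A)$ automatically has the same length as $A$ and stays clear of the diameter edges wherever $A$ goes.
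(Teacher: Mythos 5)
Your proof is correct, and it follows the same skeleton that the paper intends (its own proof of this proposition is only a sketch that defers to the argument of Proposition~\ref{prop:Prop2}): first force at least one ``step'' of the M\"obius ladder on any odd cycle, then reduce a shortest odd cycle to one using exactly one such step, and finally bound that case according to whether the remaining path passes through $u$ or stays in the cylindrical grid. Where you genuinely differ is in how the two key steps are executed. For the reduction from three or more steps to one, the paper reroutes between two consecutive special edges by ``an $xy'$ path in the first two layers,'' leaving implicit why a replacement path of the right length exists; you instead apply the half-turn automorphism $\phi\colon v_{i,j}\mapsto v_{i,j+k}$ to the step-free arc, which automatically produces a path of the same length with the correct new endpoints, so the closed walk of length $|C|-2$ comes for free --- a cleaner and fully explicit surgery. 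For the single-step case, the paper projects the path onto the $4k$-cycle between the first two layers, whereas you count column displacement (each up-step changes the column by $0$ or $-1$, each down-step by $0$ or $+1$, and the net change must be $\equiv k \pmod{2k}$), which gives the $2k+1$ bound by elementary bookkeeping without invoking a projection. Both of your substitutions are sound, and together they supply the details the paper leaves to analogy with Proposition~\ref{prop:Prop2}; the only cost is that your argument leans on the specific symmetry of the construction, while the paper's rerouting idea is the one that transfers verbatim between $\widehat{BQ}(\ell,2k+1)$ and $\widehat{BQ}(\ell,2k)$.
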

	\begin{proof}
		A cycle of $\widehat{BQ}(l, 2k)$ which does not contain any step of the M\"obius ladder induced by the first two layers is even. That is to say any odd cycle has at least one step of this  M\"obius ladder. A step of the M\"obius ladder together with one of the two paths that are connecting the end vertices of this step through the first two layers form an odd cycle of length $2k+1$. Also, there is another natural choice for an odd cycle constituted by this step and the shortest path which contains the universal vertex $u$ connecting the end vertices of this step. This cycle is of length $2l-1$. In a similar way as in the proof of Proposition~\ref{prop:Prop2} one may conclude that one of these two odd cycles of $\widehat{BQ}(l,2k)$ is the shortest.  
	\end{proof}
	
	The two constructions $\widehat{BQ}(\ell, 2k+1)$ and $\widehat{BQ}(\ell, 2k)$ can be defined uniformly as follows. Starting with an $i$-star ($i=2k+1$ or $i=2k$) on the projective plane, we complete it to quadrangulation of the planar part except for the vertices on the outer layer which are at distance $\ell$ or $\ell-1$ from the center of the star, and assign a negative sign to everything so that all facial 4-cycles are positive. We then complete the outer layer to a M\"obuis ladder, choosing signs for the crossing edges so that all faces are positive 4-cycles but the non-contractible cycles are negative.
	
	We view this class of signed graphs as \textit{Basic Qudrangulations} of the projective plane and thus use the notation $\widehat{BQ}(\ell, i)$.  
	
	\subsection{$\widehat{BM}(\ell,2k)$}
	
	The last construction we present here, $\widehat{BM}(\ell,2k)$, is built from $C_{\ell \times 2k}$ as follows. Taking all the edges of this graph as negative edges, on the last layer of the cylinder, as in the other cases, we add a (universal) vertex which is joined to all vertices of this layer with negative edges. On the first layer we add a set $\{u_{1}, \dots, u_{k}\}$ of vertices, then join each $u_i$, $i=1, \dots k$, to $v_{1, i}$ and $v_{1, i+1}$ with negative edges and to $v_{1, i+k}$ and $v_{1, i+k+1}$ with positive edges. See Figure~\ref{fig:SecondBipartite} for a depiction.
	
	We leave it to the reader to check the following.
	
	\begin{proposition}
		Given integers $l,k \geq 2$, the shortest negative cycle of the signed bipartite graph $\widehat{BM}(l,2k)$ is of length $min\{2l+2,2k\}$. 
	\end{proposition}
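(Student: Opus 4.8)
The plan is to pin down the negative girth by exhibiting two short negative cycles (for the upper bound) and by using two ``topological'' gadgets — a graph homomorphism to a path and a $\mathbb{Z}/4k\mathbb{Z}$-valued potential, i.e.\ a winding number — for the matching lower bound. Throughout, write $u$ for the apex (universal) vertex joined to the last layer, so that $u$ is distinct from the first‑layer vertices $u_1,\dots,u_k$, and recall that the positive edges of $\widehat{BM}(\ell,2k)$ are exactly $u_iv_{_{1,i+k}}$ and $u_iv_{_{1,i+k+1}}$. For the upper bound I would start from the positive edge $u_1v_{_{1,k+1}}$. Joining its two ends by the all–negative path $u_1,v_{_{1,2}},u_2,v_{_{1,3}},u_3,\dots,u_k,v_{_{1,k+1}}$ (which uses the edge $u_1v_{_{1,2}}$ and then the two negative edges at each of $u_2,\dots,u_k$) gives a negative cycle of length $2k$ lying inside the subgraph induced by the first layer together with $\{u_1,\dots,u_k\}$; using $k\ge 2$ one checks all $2k$ vertices are distinct. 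Joining the same two ends instead by the all–negative path that descends the column $v_{_{1,k+1}},v_{_{2,k+1}},\dots,v_{_{\ell,k+1}},u$ and re‑ascends $u,v_{_{\ell,1}},v_{_{\ell-1,1}},\dots,v_{_{1,1}},u_1$ gives a negative cycle of length $2\ell+2$. Hence the negative girth is at most $\min\{2\ell+2,2k\}$.

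For the lower bound, let $C$ be any negative cycle. Since the underlying graph is bipartite, $C$ is even, and since a cycle is negative iff it contains an odd number of positive edges, $C$ has an odd number $p\ge 1$ of positive edges; as every positive edge is incident to one of $u_1,\dots,u_k$, the cycle $C$ meets $\{u_1,\dots,u_k\}$. I split according to whether $C$ passes through $u$. If it does, I use the ``height'' map $h$ defined by $h(v_{_{i,j}})=i$, $h(u_i)=0$ for all $i$, and $h(u)=\ell+1$. A direct check of the three types of edges shows that $h$ is a graph homomorphism onto the path $0\mathrm{-}1\mathrm{-}\cdots\mathrm{-}(\ell+1)$, so $h(C)$ is a closed walk in this path; it reaches the endpoint $\ell+1$ (at $u$) and the endpoint $0$ (at some $u_i$ on $C$), so it has length at least $2(\ell+1)$, and since $h$ sends each edge to an edge, $|C|=|h(C)|\ge 2\ell+2$.

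If instead $C$ avoids $u$, then $C$ lies in $\widehat{BM}(\ell,2k)-u$, on which I would introduce the potential $\rho\colon V\to\mathbb{Z}/4k\mathbb{Z}$ given by $\rho(v_{_{i,j}})=2j+i-2$ and $\rho(u_i)=2i$. One verifies that along every negative edge $\rho$ changes by $\pm1$, whereas along every positive edge $\rho$ changes by an element of $\mathbb{Z}/4k\mathbb{Z}$ equal to $2k-1$ or $2k+1$ (note $-(2k-1)\equiv 2k+1$ and $-(2k+1)\equiv 2k-1$ modulo $4k$, so this holds irrespective of orientation). Summing these increments, lifted to integers, around $C$ must yield $0$ in $\mathbb{Z}/4k\mathbb{Z}$; writing this total as $\sigma+2kp+\tau$, where $\sigma$ is the sum of the $|C|-p$ increments $\pm1$ coming from negative edges and $\tau=\sum_{j=1}^{p}\eta_j$ with $\eta_j\in\{-1,1\}$ recording whether the $j$‑th positive edge contributed $2k-1$ or $2k+1$, and using that $p$ is odd (so $2kp\equiv 2k$), we obtain $\sigma+\tau\equiv 2k\pmod{4k}$. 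Since $2k\not\equiv 0$, the integer $\sigma+\tau$ is nonzero, hence $|\sigma+\tau|\ge 2k$; on the other hand $|\sigma+\tau|\le|\sigma|+|\tau|\le(|C|-p)+p=|C|$, so $|C|\ge 2k$. Combining the two cases, every negative cycle has length at least $\min\{2\ell+2,2k\}$, and together with the two cycles constructed above this gives equality.

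The part I expect to require genuine care is the computation in the last paragraph: one must choose the potential $\rho$ so that \emph{all} edge–increments are simultaneously controlled — this is precisely the ``winding number'' mechanism the paper advertises — and then be slightly careful that the integer lifts of the positive‑edge increments are taken consistently, so that the modular identity $\sigma+\tau\equiv 2k\pmod{4k}$ is legitimate. (An alternative, essentially equivalent, route for the $C$‑avoids‑$u$ case is to note that folding the cylinder down to its first two layers is a sign‑preserving retraction of $\widehat{BM}(\ell,2k)-u$ onto the top part $H$, and run the same $\rho$‑argument on the smaller graph $H$; I would mention this only if it shortens the exposition.) Everything else — checking that the two explicit walks are cycles, and that $h$ and $\rho$ respect edges — is routine verification.
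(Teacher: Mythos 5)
Your proof is correct. Note that for this particular proposition the paper offers no proof at all (it is explicitly left to the reader), so the relevant comparison is with the paper's proofs of the analogous statements for $\widehat{BQ}(\ell,2k+1)$ and $\widehat{BQ}(\ell,2k)$. Those arguments run structurally: they first reduce to the case of a shortest negative cycle using exactly one positive edge (by replacing two consecutive positive edges and the subpath between them with a shorter detour through the first two layers) and then bound one-positive-edge cycles directly via the natural projection to a layer or a path through the apex. You do something genuinely different for the lower bound: you never reduce the number of positive edges, but instead control all of them at once, using the height homomorphism onto the path $0\mbox{--}1\mbox{--}\cdots\mbox{--}(\ell+1)$ for cycles through $u$, and the potential $\rho\colon V\to\mathbb{Z}/4k\mathbb{Z}$ (increments $\pm1$ on negative edges, $2k\pm1$ on positive ones, combined with the parity of the number of positive edges) for cycles avoiding $u$. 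I checked the edge-increment computations, including the wrap-around edges $v_{i,1}v_{i+1,2k}$ and the edges $u_kv_{_{1,1}}$, and the lifting argument giving $|\sigma+\tau|\ge 2k$; they are sound, and your two explicit cycles of lengths $2k$ and $2\ell+2$ match the natural ones the paper would exhibit. What your route buys is rigour and uniformity: the shortcutting step in the paper's Proposition for $\widehat{BQ}(\ell,2k+1)$ requires knowing a suitable replacement path exists and behaves well, whereas your two gadgets are finite verifications; what it costs is that the argument is tied to the explicit labelling and is somewhat more computational, and the case split at $u$ is forced because $\rho$ cannot be extended to the apex. This is also very much in the spirit of the paper's winding-number theme, so it fits well. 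One small wording caution: ``a cycle is negative iff it contains an odd number of positive edges'' is, as a general statement about signed graphs, wrong (negativity means an odd number of \emph{negative} edges); it is equivalent for even cycles, so in your bipartite setting the conclusion that $p$ is odd is fine, but the sentence should be phrased so that the evenness of $C$ is doing that work, as you implicitly intend.
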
 
	
	In particular, $\widehat{BM}(k-1,2k)$ has $2k^2-k+1$ vertices and its shortest negative cycle is of length $2k$.

	\begin{figure}[ht]
		\centering
		
		\begin{minipage}{.4\textwidth}

			\scalebox{1.3}{
				\tikzset{math3d/.style= {x={(2cm,0cm)}, y={(0cm,2cm)}}}
				\begin{tikzpicture}[math3d]
					\newcommand{\n}{16}
					\newcommand{\h}{7}
					\newcommand{\ch}{12}
					\newcommand{\rl}{1}
					\newcommand{\rh}{1}

					\path[draw, gray] plot[domain=0:2*pi,samples=4*\n] ({\rh*cos(\x r)}, {\rh*sin(\x r)}, 3.5);
					
					\foreach \i in {1,...,6}{
						\path[draw, dashed, gray] plot[domain=0:2*pi,samples=4*\n] ({\rh*cos(\x r)}, {\rh*sin(\x r)}, .5*\i);
					}

					\foreach \i in {1,3,5}{
						\foreach \t in {1,...,\n} {
							
							\draw [thick, red, opacity=0.2] ( ({(\rl)*cos((2*\t+1)*pi/\n r)},{\rl*sin((2*\t+1)*pi/\n r)},.5*\i) -- ({\rh*cos((2*\t+1)*pi/\n r-\ch)},{\rh*sin((2*\t+1)*pi/\n r-\ch)},.5*\i+.5)  -- cycle;
							
							\draw [ thick, red, opacity=0.2] ( ({(\rl)*cos((2*\t+1)*pi/\n r)},{\rl*sin((2*\t+1)*pi/\n r)},.5*\i) -- ({\rh*cos((2*\t+1)*pi/\n r+\ch)},{\rh*sin((2*\t+1)*pi/\n r+\ch)},.5*\i+.5)  -- cycle;
						}

					}
					
					\foreach \i in {2,4,6}{
						\foreach \t in {1,...,\n} {
							
							\draw [ thick, red, opacity=0.2] ( ({(\rl)*cos((2*\t+1)*pi/\n r+\ch)},{\rl*sin((2*\t+1)*pi/\n r+\ch)},.5*\i) -- ({\rh*cos((2*\t+1)*pi/\n r)},{\rh*sin((2*\t+1)*pi/\n r)},.5*\i+.5)  -- cycle;
							
							\draw [ thick, red, opacity=0.2] ( ({(\rl)*cos((2*\t+1)*pi/\n r-\ch)},{\rl*sin((2*\t+1)*pi/\n r-\ch)},.5*\i) -- ({\rh*cos((2*\t+1)*pi/\n r)},{\rh*sin((2*\t+1)*pi/\n r)},.5*\i+.5)  -- cycle;
						}
					}

					\foreach \t in {1,...,\n} {
						
						\draw [  red, opacity=1] ( ({(cos((2*\t+14)*pi/\n r+\ch)},{sin((2*\t+14)*pi/\n r+\ch)},3.5) -- (-.5,-.5)  -- cycle;
						
					}
					\draw node[circle, red, inner sep=0mm, draw=black!80, fill=white, minimum size=1mm] at (-.5,-.5) {};
				\end{tikzpicture}
			}
		\end{minipage}
		\begin{minipage}{.4\textwidth}
			\scalebox{1.3}{
				\tikzset{math3d/.style= {x={(2cm,0cm)}, y={(0cm,2cm)}}}
				\begin{tikzpicture}[math3d]
					\newcommand{\n}{16}
					\newcommand{\h}{7}
					\newcommand{\hl}{8}
					\newcommand{\ch}{12}
					\newcommand{\rl}{1}
					\newcommand{\rh}{1}
					
					
					\foreach \i in {1,...,\h}{
						\path[draw, dashed, gray] plot[domain=0:2*pi,samples=4*\n] ({\rh*cos(\x r)}, {\rh*sin(\x r)}, .5*\i);

					}
					\foreach \i in {1,3,5}{
						\foreach \t in {1,...,\n} {
							
							\draw [thick, red, opacity=0.2] ( ({(\rl)*cos((2*\t+1)*pi/\n r)},{\rl*sin((2*\t+1)*pi/\n r)},.5*\i) -- ({\rh*cos((2*\t+1)*pi/\n r-\ch)},{\rh*sin((2*\t+1)*pi/\n r-\ch)},.5*\i+.5)  -- cycle;
							
							\draw [ thick, red, opacity=0.2] ( ({(\rl)*cos((2*\t+1)*pi/\n r)},{\rl*sin((2*\t+1)*pi/\n r)},.5*\i) -- ({\rh*cos((2*\t+1)*pi/\n r+\ch)},{\rh*sin((2*\t+1)*pi/\n r+\ch)},.5*\i+.5)  -- cycle;
						}

					}
					
					\foreach \i in {2,4,6}{
						\foreach \t in {1,...,\n} {
							
							\draw [ thick, red, opacity=0.2] ( ({(\rl)*cos((2*\t+1)*pi/\n r+\ch)},{\rl*sin((2*\t+1)*pi/\n r+\ch)},.5*\i) -- ({\rh*cos((2*\t+1)*pi/\n r)},{\rh*sin((2*\t+1)*pi/\n r)},.5*\i+.5)  -- cycle;
							
							\draw [ thick, red, opacity=0.2] ( ({(\rl)*cos((2*\t+1)*pi/\n r-\ch)},{\rl*sin((2*\t+1)*pi/\n r-\ch)},.5*\i) -- ({\rh*cos((2*\t+1)*pi/\n r)},{\rh*sin((2*\t+1)*pi/\n r)},.5*\i+.5)  -- cycle;
						}
					}

						%
						%
					
					\foreach \t in {1,...,\n} {
						
						\draw node[circle, red, inner sep=0mm, draw=black!80, fill=white, minimum size=.5mm] at (({cos((2*\t+14)*pi/\n r+\ch)},{sin((2*\t+14)*pi/\n r+\ch)},.5)  (x\t) {};

					}
					
					\foreach \t in {1,...,\hl} {
						
						\draw node[circle, red, inner sep=0mm, draw=black!80, fill=white, minimum size=1mm] at (({cos((1.725*\t+12)*pi/\n r+\ch)/1.6},{sin((1.725*\t+12)*pi/\n r+\ch)/1.8})  (y\t){};

					}
					
					\foreach \i/\j in { 15/1, 16/1,16/2, 1/2,1/3,2/3,2/4, 3/4, 3/5, 4/5, 4/6, 5/6, 5/7, 6/7, 6/8, 7/8} {
						
						\draw[line width=0.2mm, red ] (x\i) -- (y\j) ;}
					
					\foreach \i/\j in { 15/8, 14/8, 14/7, 13/7, 13/6, 12/6, 12/5, 11/5, 11/4, 10/4, 10/3, 9/3, 9/2, 8/2, 8/1, 7/1} {
						
						\draw[line width=0.2mm,blue] (x\i) -- (y\j) ;}
					
				\end{tikzpicture}
			}
		\end{minipage}
		\caption{ $\widehat{BM}(\ell,2k)$ }
		\label{fig:SecondBipartite}
	\end{figure}
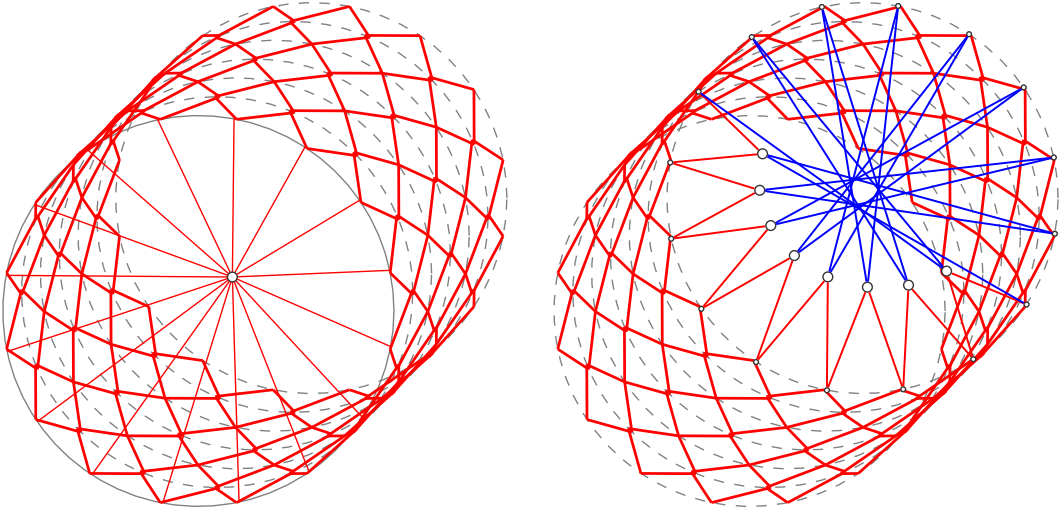

	\section{Winding number and coloring}\label{sec:Winding-Coloring}
	
	Given a simple closed curve $\gamma$ on the plane, and a continuous mapping $\varphi$ of $\gamma$ to $O_r$, we define the \textit{winding number} of the pair $(\gamma, \varphi)$ to be the winding number of the curve $\varphi(\gamma)$ with center of $O_r$ considered as the center of the plane. Intuitively speaking, $(\gamma, \varphi)$ tells us how many times the curve $\gamma$ is wrapped around $O_r$ in the clockwise direction noting that a negative number reflects an anticlockwise mapping. This value then will be denoted by $\omega(\gamma, \varphi)$.
	
	A mapping $c$ of the vertices of the cycle $C_n$ to the points of $O_r$ can be extended to a continuous mapping of $C_n$ to $O_r$ with the former being viewed as the closed curve or the polygon. There are $2^n$ natural ways to do this. For each pair $v_i,v_{i+1}$ of the vertices of $C_n$, the pair $c(v_i), c(v_{i+1})$ partitions the circle $O_r$ into two parts. The segment of the polygon that represents the edge $v_iv_{i+1}$ can be projected into one of these two parts. We note that $c$ is allowed to map several vertices of $C_n$ to the same point and that even if $v_i$ and $v_{i+1}$ are mapped to the same point, in our view, they partition the circle $O_r$ into two parts: a part of length 0 and a part of length $r$.

	These $2^n$ extensions are in a one-to-one correspondence with the $2^n$ possible orientations of $C_n$: orient the edge $v_iv_{i+1}$ in such a way that the mapping follows the clockwise direction of $O_r$.
	
	Given a coloring $c$ of the vertices of the cycle $C_n$, two extensions of $c$ to a mapping of the polygon to $O_r$ are of special importance. The first is the extension corresponding to the directed cycle $C_n$. Here $v_iv_{i+1}$ is mapped to the part of the circle where $c(v_{i+1})$ follows $c(v_i)$ in the clockwise direction. Let us denote this extension by $c^D$. A trivial observation here is that the winding number of $(C_n, c^D)$ is never 0.

	The other natural extension is to choose the shortest of the two parts of the circle determined by $c(v_i)$ and $c(v_{i+1})$ and project the line $v_{i}v_{i+1}$ onto it. The orientation corresponding to this extension then depends on whether $c(v_i)$ is the start or the end of this shorter part of the circle with respect to the clockwise orientation. We denote this extension by $c^{sh}$ and observe that this extension may result in winding number 0 for some choices of $c$ (and $r$). 
	
	Given the cycle $C_n$, a mapping $c$ of its vertices to $O_r$ and an extension $\varphi$ of $c$ to the polygon, a combinatorial way to compute $\omega(C_n, \varphi)$ is as follows: take an (open) interval $I$ on $O_r$ which does not contain any image of the vertices of $C_n$. Then in an extension $\varphi$ of $c$ to a mapping of the polygon to $O_r$, each edge of $C_n$ either traverses $I$ completely or does not touch any point of it. Now the winding number $\omega(C_n, \varphi)$ is the number of edges that traverse $I$ in the clockwise direction minus the number of edges that traverse it in the anticlockwise direction (and thus independent of the choice of I).
	
	Let $c$ be a mapping of the vertices of a cycle $C_n$ to the circle $O_r$. Consider the continuous mapping $(C_n, c^D)$ and an (open) interval $I$ of $O_r$ which does not contain any point $c(v_i)$. Color the edges of $C_n$ with two colors, say green and orange, as follows: if the image of an edge $e$ under $c^D$ contains $I$, then color it green, otherwise, color it orange. We are interested in the pairs of consecutive edges $v_{i-1}v_{i}$ and $v_{i}v_{i+1}$, which are colored differently. If in such a pair, the first edge is colored green, then in the next pair of this sort (next in the cyclic order of indices), the first edge must be orange and vice versa. Thus, the total number of such pairs is even, that is regardless of the choices of $n$ and $c$. 
	
	To use this observation, we will work with certain types of mappings $c$. We say a mapping $c$ of the vertices of $C_n$ to the points on $O_r$ is \textit{far-polar} if the followings hold: for each $i$ the pair of the points $c(v_{i-1})$ and $c(v_{i+1})$ on $O_r$ partitions $O_r$ into two unequal parts and that $c(v_i)$ is on the larger of the two parts. More generally, a mapping $\phi$ of the vertices of a graph $G$ to the circle $O_r$ is called far-polar if for each vertex $x$ of $G$ there is a diameter $D_{x}$ which separates $\phi(x)$ from $\phi(y)$ for all neighbors $y$ of $x$. 
	
	In the following, we present how the condition of $c$ being a far-polar mapping provides a connection between $c^D$ extension of $c$ on $C_n$ and $c^{sh}$ extension of the mapping $c$ on $C_n^{\#2}$.

	\begin{lemma}\label{lem:NonCrossingAlwaysEven}
		Let $c$ be a far-polar mapping of $C_n$ to $O_r$ and let $I$ be an interval of $O_r$ which does not contain any $c(v_i)$. Then in the extension $c^{sh}$ of a mapping of the one or two cycles in $C_n^{\#2}$ to $O_r$, the number of edges $v_{i-1}v_{i+1}$ that does not cross over $I$ is an even number.
	\end{lemma}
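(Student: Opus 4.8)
The plan is to reduce the statement to the parity observation recorded just before the lemma: in the green/orange colouring of the edges of $C_n$ determined by $c^D$ and $I$ (an edge is green exactly when its $c^D$-image contains $I$), the number of indices $i$ for which the two edges $v_{i-1}v_i$ and $v_iv_{i+1}$ receive different colours is even, since this is just the number of sign changes in a cyclic binary sequence. The crux will be a local, index-by-index correspondence:
\[ c^{sh}(v_{i-1}v_{i+1}) \text{ does not cross } I \quad\Longleftrightarrow\quad v_{i-1}v_i \text{ and } v_iv_{i+1} \text{ get different colours}. \]
Granting this, and recalling that whether $n$ is odd or even the edges of $C_n^{\#2}$ are precisely the pairs $v_{i-1}v_{i+1}$ for the $n$ choices of $i$, the number of edges of $C_n^{\#2}$ whose $c^{sh}$-image avoids $I$ equals the number of bichromatic \emph{corners} of $C_n$, hence is even.

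To set up the correspondence, fix an index $i$ and put $p=c(v_{i-1})$, $q=c(v_i)$, $s=c(v_{i+1})$. The far-polar hypothesis says that the two arcs of $O_r$ with endpoints $p$ and $s$ have different lengths and that $q$ lies in the interior of the longer one; in particular $p,q,s$ are three distinct points, $c^{sh}$ is well defined on $v_{i-1}v_{i+1}$, and its image is the shorter of the two $p$-to-$s$ arcs, namely the one not containing $q$. Since $I$ contains none of $p,q,s$, it lies inside exactly one of the three open arcs into which $\{p,q,s\}$ partition $O_r$. Finally recall that the $c^D$-image of $v_{i-1}v_i$ is the clockwise arc from $p$ to $q$ and that of $v_iv_{i+1}$ is the clockwise arc from $q$ to $s$, so such an edge is green precisely when $I$ is contained in that clockwise arc.

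Then I would distinguish two cases according to which of the two $p$-to-$s$ arcs is the long one (equivalently, the cyclic order of $p,q,s$ on $O_r$). If the clockwise arc from $p$ to $s$ is the long one, then $q$ cuts it into the two $c^D$-images above, so $I$ lies in at most one of them; checking the three possible positions of $I$ shows that $I$ lies in this long clockwise arc exactly when precisely one of $v_{i-1}v_i, v_iv_{i+1}$ is green, while $I$ lies in the complementary short arc, which is $c^{sh}(v_{i-1}v_{i+1})$, exactly when neither edge is green. If instead the counterclockwise arc from $p$ to $s$ is the long one, a short arc-chase shows that the $c^D$-image of $v_{i-1}v_i$ is the union of the short clockwise $p$-to-$s$ arc with the clockwise arc from $s$ to $q$, while the $c^D$-image of $v_iv_{i+1}$ is the union of the clockwise arc from $q$ to $p$ with that same short clockwise $p$-to-$s$ arc; hence both edges are green exactly when $I$ lies in the short arc, which is now $c^{sh}(v_{i-1}v_{i+1})$, and exactly one of them is green for the other two positions of $I$. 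In both cases the corner at $v_i$ is bichromatic precisely when $I$ is not contained in $c^{sh}(v_{i-1}v_{i+1})$, which is the asserted equivalence.

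I expect the second case to be the only delicate point: there the concatenation of the two clockwise $c^D$-arcs at the corner wraps once around $O_r$, so one must carefully determine which of the two edges picks up $I$ for each location of $I$ relative to $p,q,s$; the reduction and the first case follow routinely from the discussion preceding the lemma. Summing the equivalence over all $n$ indices $i$ and invoking the evenness of the number of bichromatic corners then completes the argument.
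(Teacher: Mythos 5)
Your argument is correct and is essentially the paper's own proof: both establish that an edge $v_{i-1}v_{i+1}$ of $C_n^{\#2}$ fails to cross $I$ in the $c^{sh}$ extension exactly when $v_i$ is incident to one green and one orange edge in the $c^D$ extension, and then invoke the evenness of the number of such bichromatic corners; the paper merely organizes the case analysis by the colours of the two edges at $v_i$ rather than by which $p$--$s$ arc is long and where $I$ sits. One small caveat: the paper's conventions allow $c(v_{i-1})=c(v_{i+1})$ (the two parts then having lengths $0$ and $r$), so your assertion that $p,q,s$ are distinct is not forced, but in that degenerate case the equivalence holds trivially since the $c^{sh}$ image is a single point and exactly one of the two edges at $v_i$ is green.
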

	
	\begin{proof}
		Consider three consecutive vertices $v_{i-1}, v_i, v_{i+1}$ of the cycle. If, following the $c^D$ extension of $C_n$, both edges $v_{i-1}v_{i}$ and $v_{i}v_{i+1}$ are colored orange, that is to say, in the extension they do not pass through $I$, then since $c$ is far-polar, $c(v_i)$ must be on the longer of $c(v_{i-1})c(v_{i+1})$ or  $c(v_{i+1})c(v_{i-1})$ and thus $I$ is on the shorter part. Thus in the extension $c^{sh}$ of $C_n^{\#2}$, $c(v_{i-1})c(v_{i+1})$ passes through $I$. Similarly, if both edges $v_{i-1}v_{i}$ and $v_{i}v_{i+1}$ are colored green, then $c(v_{i-1})c(v_{i+1})$ passes through $I$. On the other hand, if one of the edges is green and the other orange, then together, they must cover more than half of the $O_r$. Implying that $c(v_{i-1})c(v_{i+1})$ does not pass through $I$.

		Overall the number of edges of $C_n^{\#2}$ that do not pass through $I$ in $c^{sh}$ extension is the number of vertices of $C_n$ incident with both green and orange edges, where the colors are determined by the extension $c^D$ of $C_n$. The number of such pairs then must be even as it is observed above.  
	\end{proof}
	
	Next we give a useful observation on far-polar mapping. 

 \begin{observation}\label{obs : far-polar coloring}
     If $r<4$ and $c$ is a circular $r$-coloring of $C_n$, then it is, in particular, a far-polar mapping of $C_n$. That is because for three consecutive vertices $v_{i-1}$, $v_{i}$, and $v_{i+1}$, having partitioned $O_r$ to two parts based on $c(v_{i-1})$ and $c(v_{i+1})$, the part that contains $c(v_i)$ must be of length at least 2. As $r<4$, this must be the larger part.
 \end{observation}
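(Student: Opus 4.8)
Proof proposal. The plan is to check the far-polar condition one vertex at a time. Fix an index $i$ and look at the three consecutive vertices $v_{i-1},v_i,v_{i+1}$ of $C_n$ together with their images $a=c(v_{i-1})$, $b=c(v_i)$, $c'=c(v_{i+1})$ on $O_r$. Since $v_{i-1}v_i$ and $v_iv_{i+1}$ are edges of $C_n$ (and in a circular coloring of a graph all edges behave as negative edges), the definition of a circular $r$-coloring forces the $O_r$-distances $d(a,b)\ge 1$ and $d(b,c')\ge 1$. The goal is to show that of the two arcs into which $a$ and $c'$ cut $O_r$, the one containing $b$ is strictly the longer one; since $i$ is arbitrary this is precisely the assertion that $c$ is far-polar on $C_n$.

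The length estimate is immediate: the point $b$ lies on one of the two arcs, call it $A$, determined by $a$ and $c'$, and $b$ splits $A$ into a sub-arc from $a$ to $b$ and a sub-arc from $b$ to $c'$ whose lengths add up to $|A|$. Each of these sub-arcs has length at least the corresponding $O_r$-distance, so $|A|\ge d(a,b)+d(b,c')\ge 2$. Hence the complementary arc $B$ has length $|B|=r-|A|\le r-2$, and since $r<4$ we get $|B|\le r-2<2\le|A|$. In particular $|B|\ne|A|$, so $a$ and $c'$ do partition $O_r$ into two unequal parts, and $b=c(v_i)$ lies on the larger one. The only degenerate situation is $a=c'$, in which case, under the convention fixed earlier in the paper, the two ``arcs'' have lengths $0$ and $r$; since $d(a,b)\ge 1>0$ the point $b$ cannot lie on the length-$0$ arc, so it lies on the length-$r$ arc, which is again the larger one because $r\ge 2$.

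There is essentially no obstacle here: the argument is a one-line arc-length computation followed by the remark that $r-2<2$ when $r<4$. The only points that deserve a sentence of care are (i) the coincidence $c(v_{i-1})=c(v_{i+1})$, handled via the stated convention, and (ii) the strict inequality, which is exactly where the hypothesis $r<4$ enters and cannot be dropped. For $r=4$ the property genuinely fails: colouring $C_4$ by $0,1,2,3$ on $O_4$ is a circular $4$-colouring, but for the triple $v_1,v_2,v_3$ the images $c(v_1)=0$ and $c(v_3)=2$ are antipodal, so the two arcs have equal length $2$ and neither is ``larger''.
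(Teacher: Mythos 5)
Your argument is exactly the paper's: the two edges at $v_i$ are negative, so the arc containing $c(v_i)$ has length at least $1+1=2$, while the complementary arc has length $r-2<2$ since $r<4$, so $c(v_i)$ sits on the strictly larger part. The extra care you take with the degenerate case $c(v_{i-1})=c(v_{i+1})$ and the sharpness example at $r=4$ is a nice touch but does not change the route; the proposal is correct and matches the paper's reasoning.
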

  
  Now, we have the following two consequences depending on the parity of $n$.

	\begin{lemma}\label{lem:WC_2k^2}
		Let $c$ be a circular $r$-coloring of an even cycle $(C_n,-)$. Let $c_o$ (resp. $c_e$) be its restriction on the vertices with odd (resp. even) indices.  Then the winding numbers of $(C^{\#2o}, c^{sh}_o)$ and $(C^{\#2e}, c^{sh}_e)$ are of the same parity.
	\end{lemma}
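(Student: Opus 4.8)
The plan is to reduce the statement to a parity count of how many edges of the two squared cycles cross a single fixed generic arc of $O_r$, and then to feed that count straight into Lemma~\ref{lem:NonCrossingAlwaysEven}.

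First I would fix an open interval $I\subseteq O_r$ that contains none of the points $c(v_1),\dots,c(v_n)$; such an $I$ exists since there are finitely many such points, and since $V(C_n^{\#2})=V(C_n)$ this single $I$ simultaneously avoids the vertex-images of $C^{\#2o}$ and of $C^{\#2e}$. By Observation~\ref{obs : far-polar coloring} (here one uses $r<4$, which is the case of interest; note also that $r<4$ forces $c(v_{i-1}),c(v_{i+1})$ to be non-antipodal for every $i$, so the extension $c^{sh}$ on $C_n^{\#2}$ is well defined), $c$ is far-polar, so Lemma~\ref{lem:NonCrossingAlwaysEven} applies with this $I$.

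Next I would invoke the combinatorial description of the winding number recalled just before Lemma~\ref{lem:NonCrossingAlwaysEven}: for a cycle $C$ with a vertex-mapping to $O_r$ and an extension $\varphi$, $\omega(C,\varphi)$ is the number of edges of $C$ traversing $I$ clockwise minus the number traversing it anticlockwise; reducing modulo $2$ kills the distinction between the two directions, so $\omega(C,\varphi)$ has the same parity as the total number of edges of $C$ whose $\varphi$-image covers $I$. Applying this to $(C^{\#2o},c^{sh}_o)$ and to $(C^{\#2e},c^{sh}_e)$ with the \emph{same} interval $I$, the parity of $\omega(C^{\#2o},c^{sh}_o)+\omega(C^{\#2e},c^{sh}_e)$ equals the parity of the total number of edges of $C_n^{\#2}$ that cross $I$ in the extension $c^{sh}$.

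Finally, the count. The two components $C^{\#2o}$ and $C^{\#2e}$ are cycles of length $\frac{n}{2}$, so $C_n^{\#2}$ has $n$ edges in total, and $n$ is even by hypothesis. By Lemma~\ref{lem:NonCrossingAlwaysEven} the number of edges of $C_n^{\#2}$ that do \emph{not} cross $I$ in $c^{sh}$ is even, hence so is the number that \emph{do} cross it (being $n$ minus an even number). Combining with the previous paragraph, $\omega(C^{\#2o},c^{sh}_o)+\omega(C^{\#2e},c^{sh}_e)$ is even, i.e.\ the two winding numbers have the same parity, which is the claim. The argument is short: essentially all the work is already in Lemma~\ref{lem:NonCrossingAlwaysEven}, and the only points that need attention are that $c^{sh}$ is genuinely well defined (this is where the far-polar hypothesis, equivalently $r<4$, enters) and that one uses a common interval $I$ for both subcycles so that the parity counts are additive; beyond that I do not expect a genuine obstacle.
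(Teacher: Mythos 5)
Your proof is correct and follows essentially the same route as the paper: fix one generic interval $I$, apply Lemma~\ref{lem:NonCrossingAlwaysEven} (via the far-polar property from Observation~\ref{obs : far-polar coloring}) to get that the non-crossing edges of $C_n^{\#2}$ are even in number, use that the total number of edges is $n$ (even), and conclude by the parity identification of winding numbers with crossing counts. Your explicit remarks that $r<4$ is needed for far-polarity and that a common $I$ must be used for both components are just slightly more careful statements of what the paper leaves implicit.
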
 
	
	\begin{proof}
		That is because after choosing a suitable interval $I$, by Lemma~\ref{lem:NonCrossingAlwaysEven}, the total number of edges of $C^{\#2}$ that does not cross over $I$ in the extension $c^{sh}$ is even. As the total number of edges is also even (that is $n$), the number of edges of $C^{\#2}$ that cross over $I$ is also even. However, the winding number of each of $(C^{\#2o}, c^{sh}_o)$ and $(C^{\#2e}, c^{sh}_e)$, which is the difference of the number of edges crossing $I$ in the clockwise direction and the number of edges crossing it in the anticlockwise direction, has the same parity as the total number of the edges of the cycle in consideration that cross over $I$ (in the $c^{sh}$ extension). This proves our claim as the sum of the two winding numbers is an even number.
	\end{proof}
	
	Using this lemma, we can build a cylinder of many layers, as shown in the example of Figure~\ref{fig:ell-Times-2k+1}, with the property that in any circular $r$-coloring $c$ of the red graph ($r<4$), all of the dashed grey cycles must have winding numbers of the same parity. Observe that in this construction, the zigzag red cycle between two consecutive layers is an even cycle, and its exact square consists of the two grey cycles presenting the two layers. If we then add structures to the two ends in such a way that one force an odd winding number on one of the grey cycles and the other forces an even winding number on another one of them, then the result would be a graph which admits no circular $r$-coloring for $r<4$.
	
	A basic method to achieve these conditions 
	is presented next.

	\begin{lemma}\label{coro:far-polarC_2k+1^2}
		Given an odd integer $n$, a positive real number $r$, and a far-polar mapping $c$ of $C_n$ to $O_r$, the winding number $\omega(C^{\#2}_n, c^{sh})$ is an odd number.
	\end{lemma}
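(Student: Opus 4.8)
The plan is to read the statement off Lemma~\ref{lem:NonCrossingAlwaysEven} together with the combinatorial recipe for the winding number recalled just before it. First I would note the structural fact that, since $n$ is odd, $C^{\#2}_n$ is a single cycle on the same $n$ vertices, and its edge set is precisely $\{v_{i-1}v_{i+1} : 1\le i\le n\}$ (indices taken mod $n$); in particular it has exactly $n$ edges, an odd number. This is the only place the parity of $n$ is used.

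Next, since $c$ assigns images to only finitely many vertices, I can fix an open interval $I\subseteq O_r$ that contains no point $c(v_i)$. Under the extension $c^{sh}$ of $c$ to the polygon representing $C^{\#2}_n$, each edge $v_{i-1}v_{i+1}$ either traverses $I$ completely or does not meet $I$ at all, so the $n$ edges of $C^{\#2}_n$ partition into those crossing $I$ and those not crossing $I$. By Lemma~\ref{lem:NonCrossingAlwaysEven} (whose hypothesis is exactly that $c$ is far-polar), the number of edges that do \emph{not} cross $I$ is even; subtracting this from the odd total $n$, the number of edges that \emph{do} cross $I$ is odd.

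Finally, by the combinatorial description of the winding number, $\omega(C^{\#2}_n, c^{sh})$ equals the number of edges crossing $I$ in the clockwise direction minus the number crossing it in the anticlockwise direction; calling these counts $a$ and $b$, the previous paragraph says $a+b$ is odd, and since $a-b\equiv a+b \pmod 2$, we get that $\omega(C^{\#2}_n, c^{sh})=a-b$ is odd, which is the claim.

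I do not expect a genuine obstacle here: the entire combinatorial heart of the argument is already packaged in Lemma~\ref{lem:NonCrossingAlwaysEven}, and the far-polar hypothesis is consumed only through that lemma. The only mild care needed is to state correctly that $C^{\#2}_n$ is one $n$-cycle when $n$ is odd (so that the relevant edge count is the odd number $n$, not $n/2$ per component as in the even case), and to invoke the elementary parity identity $a-b\equiv a+b\pmod 2$ that lets the total crossing count control the parity of the signed winding number.
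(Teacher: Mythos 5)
Your proof is correct and follows essentially the same route as the paper: invoke Lemma~\ref{lem:NonCrossingAlwaysEven} to get that the non-crossing edges are even in number, use that $C^{\#2}_n$ is a single $n$-cycle (so the crossing edges are odd in number), and conclude via the parity identity $a-b\equiv a+b\pmod 2$ applied to the combinatorial description of the winding number. No gaps; this matches the paper's argument step for step.
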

	
	\begin{proof} 
		By Lemma~\ref{lem:NonCrossingAlwaysEven}, the total number of edges of $C^{\#2}_n$ that does not cross over $I$ is even. As $n$ is an odd number, $C^{\#2}_n$ is isomorphic to $C_n$, and, hence, the number of edges crossing over $I$ is odd. This is the sum of the number of edges crossing over $I$ in the clockwise direction and in the anticlockwise direction. Thus the winding number, which is the difference between these two numbers, is also an odd number. 
	\end{proof}
	
	Applying this lemma on circular $r$-coloring for $r<4$ we have the following.
	
	\begin{lemma}\label{coro:WC_2k+1^2}
		Given an odd integer $n$, $n=2k+1$, a real number $r$ satisfying $2+\frac{1}{k}\leq r <4$, and a circular $r$-coloring  $c$ of $(C_n,-)$, the winding number $\omega(C^{\#2}_n, c^{sh})$ is an odd number.
	\end{lemma}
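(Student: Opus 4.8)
The plan is to obtain this as an immediate combination of Observation~\ref{obs : far-polar coloring} and Lemma~\ref{coro:far-polarC_2k+1^2}, with the lower bound on $r$ playing only the role of guaranteeing that the hypothesis is non-vacuous.

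First I would note that for the all-negative cycle $(C_n,-)$ with $n=2k+1$ a circular $r$-coloring is exactly a classical circular $r$-coloring of the cycle $C_{2k+1}$ (adjacent vertices are mapped to points of $O_r$ at distance at least $1$), and the smallest $r$ admitting such a coloring is $\chi_c(C_{2k+1})=2+\tfrac1k$. Hence the condition $2+\tfrac1k\le r$ is precisely what ensures that a circular $r$-coloring $c$ of $(C_n,-)$ exists at all; for $r<2+\tfrac1k$ the statement is vacuously true and nothing is to prove. So assume $c$ is a circular $r$-coloring of $(C_n,-)$ with $2+\tfrac1k\le r<4$.

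Next, since $r<4$, Observation~\ref{obs : far-polar coloring} applies and tells us that $c$, being a circular $r$-coloring of $C_n$, is in particular a far-polar mapping of $C_n$: for any three consecutive vertices $v_{i-1},v_i,v_{i+1}$, the arc of $O_r$ between $c(v_{i-1})$ and $c(v_{i+1})$ containing $c(v_i)$ has length at least $2$, and as $r<4$ this is the longer of the two arcs. Now I would invoke Lemma~\ref{coro:far-polarC_2k+1^2} with this odd $n$, this $r$, and this far-polar mapping $c$: it yields directly that $\omega(C^{\#2}_n,c^{sh})$ is an odd number, which is exactly the claim.

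There is essentially no obstacle here; the statement is a specialization of Lemma~\ref{coro:far-polarC_2k+1^2} to the case where the far-polar mapping arises as a circular $r$-coloring with $r<4$, the only point worth spelling out being the remark above that the hypothesis $r\ge 2+\tfrac1k$ is exactly the existence threshold $\chi_c(C_{2k+1})$, so that the restricted range $2+\tfrac1k\le r<4$ is the natural one in which the conclusion is both meaningful and true.
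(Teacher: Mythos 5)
Your proposal matches the paper's own proof: both deduce from Observation~\ref{obs : far-polar coloring} that a circular $r$-coloring with $r<4$ is a far-polar mapping of $C_n$, and then apply Lemma~\ref{coro:far-polarC_2k+1^2} to conclude the winding number is odd. Your extra remark that the bound $r\ge 2+\frac1k$ is just the existence threshold $\chi_c(C_{2k+1})$ is a correct clarification but not needed for the argument.
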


	\begin{proof} 

            The proof follows from the Observation \ref{obs : far-polar coloring} and by Lemma \ref{coro:far-polarC_2k+1^2}.
	\end{proof}
	
	\begin{observation}\label{obs:AddingUniversalVertex}
		Let $G$ be the star $K_{1,n}$ with $u$ being the central vertex and $A$ being the independent set of order $n$. Let $c$ be a circular $r$-coloring of $G$ with $r<4$. Then for any cycle $C$ with $V(C)=A$ the winding number of $(C, c^{sh})$ is 0.  
	\end{observation}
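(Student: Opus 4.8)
The plan is to use the fact that, when $r<4$, a circular $r$-coloring of the star $K_{1,n}$ is forced to send all leaf colours into a short arc of $O_r$, and that a closed curve confined to a proper sub-arc of a circle has winding number $0$ about the centre.

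\textbf{Step 1 (locating $c(A)$).} First I would pin down where the leaves can go. Since every edge $ua$ with $a\in A$ is negative, a circular $r$-coloring puts $c(a)$ at distance at least $1$ from $c(u)$ on $O_r$. The set $J$ of points of $O_r$ at distance at least $1$ from $c(u)$ is precisely the closed arc of length $r-2$ centred at the antipode $\overline{c(u)}$, so $c(A)\subseteq J$. The key numerical fact, and the only place where $r<4$ enters, is that $|J|=r-2<\tfrac r2$; equivalently, $J$ is strictly shorter than half of $O_r$, and its complement $O_r\setminus J$ is an arc of length exactly $2$.

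\textbf{Step 2 ($c^{sh}$ stays inside $J$).} Next I would check that the extension of the cycle stays in $J$. Let $C$ be a cycle with $V(C)=A$ and let $v_iv_{i+1}$ be one of its edges. Both $c(v_i)$ and $c(v_{i+1})$ lie in $J$, so they cut $O_r$ into the sub-arc of $J$ joining them, of length at most $|J|<\tfrac r2$, and the complementary arc, of length more than $\tfrac r2$. Hence the shorter of the two arcs — the one onto which $c^{sh}$ projects the segment $v_iv_{i+1}$ — is contained in $J$. Therefore the whole extended curve $(C,c^{sh})$ lies inside the proper arc $J$.

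\textbf{Step 3 (reading off the winding number).} Finally I would invoke the combinatorial description of the winding number recalled earlier in the section: pick an open interval $I$ inside the nonempty arc $O_r\setminus J$; it contains no image of a vertex of $C$, and since every edge of $C$ is mapped by $c^{sh}$ into $J$, no edge crosses $I$ in either direction. Thus $\omega(C,c^{sh})=0$.

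There is essentially no obstacle in this argument; the one point requiring a moment's care is the inequality $|J|=r-2<\tfrac r2$, which is exactly the use of $r<4$ and which does double duty — it keeps each $c^{sh}$-arc of an edge from escaping $J$, and it makes $O_r\setminus J$ a genuine arc in which the test interval $I$ can be placed.
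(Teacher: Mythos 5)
Your proof is correct and follows essentially the same route as the paper: both arguments confine the images of $A$ to the arc at distance at least $1$ from $c(u)$, use $r<4$ to conclude that the shorter arc between any two such points avoids a neighbourhood of $c(u)$, and then place the test interval $I$ near $c(u)$ to read off winding number $0$. Your packaging via the arc $J$ of length $r-2<\tfrac r2$ is just a slightly more global phrasing of the paper's edge-by-edge observation.
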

	
	This is observed by taking a small interval $I$ sufficiently close to $c(u)$ and noting that first of all, a vertex of $C$ cannot be mapped to $c(u)$; secondly,  since $r<4$, for any pair $x$ and $y$ of vertices in $A$ in the partition of $O_r$ to two parts by $c(x)$ and $c(y)$, the part containing $c(u)$ is of length at least 2 and thus it is the larger of the two, meaning in the shortest extension, $c(x)c(y)$ will never cross over $I$.
	
	We may now give a new proof of the following theorem.
	
	\begin{theorem}\label{thm:ChiMlC2k+1}
		For any positive integers $\ell$ and $k$, we have $\chi_c(M_{\ell}(C_{2k+1}))=4$.
	\end{theorem}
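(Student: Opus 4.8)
The upper bound is not where the difficulty lies: it is classical (see Section~\ref{sec:history}) that $\chi(M_\ell(C_{2k+1}))=4$, and in general $\chi_c(G)\le\chi(G)$ (place the $\chi(G)$ colour classes at the vertices of a regular polygon inscribed in $O_{\chi(G)}$), so $\chi_c(M_\ell(C_{2k+1}))\le 4$. The plan is therefore to prove $\chi_c(M_\ell(C_{2k+1}))\ge 4$ by showing that no real $r<4$ admits a circular $r$-coloring. Suppose $c$ is such a coloring with $r<4$. For $1\le i\le\ell$, let $L_i$ be the cycle on the $i$-th layer $\{v_{i,1},\dots,v_{i,2k+1}\}$, read in the cyclic order of the second index; although $L_i$ need not be a subgraph of $M_\ell(C_{2k+1})$, it inherits the restriction of $c$ and hence a winding number $\omega(L_i,c^{sh})$ in the shortest extension. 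The argument is to determine the parity of these $\ell$ integers in three mutually inconsistent ways.

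For the interior layers: the edges of $M_\ell(C_{2k+1})$ running between consecutive layers $i$ and $i+1$ form an even cycle $Z_i$ of length $2(2k+1)$ whose vertices alternate between the two layers, and a direct check shows that the two cycles making up $Z_i^{\#2}$ are exactly $L_i$ and $L_{i+1}$. Since $c$ restricts to a circular $r$-coloring of $(Z_i,-)$ with $r<4$, it is far-polar on $Z_i$ by Observation~\ref{obs : far-polar coloring}, so Lemma~\ref{lem:WC_2k^2} gives $\omega(L_i,c^{sh})\equiv\omega(L_{i+1},c^{sh})\pmod 2$. Chaining over $i=1,\dots,\ell-1$, all the numbers $\omega(L_i,c^{sh})$ share a common parity.

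For the first layer: the added edges $v_{1,j}v_{1,j+k}$ form a single $(2k+1)$-cycle $T$ (because $\gcd(k,2k+1)=1$), and two of its vertices lie at $T$-distance $2$ precisely when their second indices differ by $1$, so $T^{\#2}=L_1$. The restriction of $c$ to $T$ is far-polar: if $v$ is $T$-adjacent to $v'$ and $v''$, then $c(v)$ lies on an arc between $c(v')$ and $c(v'')$ of length at least $2$ (being at distance $\ge 1$ from each endpoint), hence on the strictly longer of the two arcs since $r<4$. Therefore Lemma~\ref{coro:far-polarC_2k+1^2} applies to $T$ and yields that $\omega(L_1,c^{sh})=\omega(T^{\#2},c^{sh})$ is \emph{odd}. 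For the last layer: the universal vertex $u$ together with $\{v_{\ell,1},\dots,v_{\ell,2k+1}\}$ induces a star $K_{1,2k+1}$ on whose leaves the cycle $L_\ell$ lives, so Observation~\ref{obs:AddingUniversalVertex} gives $\omega(L_\ell,c^{sh})=0$, in particular \emph{even}.

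The last two facts contradict the common parity from the second paragraph (and when $\ell=1$ they contradict each other directly, with no chaining needed); hence no circular $r$-coloring with $r<4$ exists, and $\chi_c(M_\ell(C_{2k+1}))\ge 4$, completing the proof together with the upper bound. I expect the real work to be purely combinatorial bookkeeping: recognising each layer cycle $L_i$ as the exact square of a cycle that genuinely sits inside $M_\ell(C_{2k+1})$ — the zigzag $Z_i$ for the interior layers, the cycle $T$ of Mycielski chords for the first layer — and, in each case, verifying that the pertinent restriction of $c$ meets the hypotheses of the lemma invoked; the one slightly delicate verification is the far-polar property of $c$ on $T$, whose edges are the added chords rather than edges within a single layer.
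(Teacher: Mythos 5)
Your proposal is correct and follows essentially the same route as the paper: a circular $r$-coloring with $r<4$ is far-polar, the chord cycle on the first layer forces an odd winding number via Lemma~\ref{coro:far-polarC_2k+1^2}, the universal vertex forces winding number $0$ via Observation~\ref{obs:AddingUniversalVertex}, and Lemma~\ref{lem:WC_2k^2} applied to the zigzag cycles propagates parity across the layers, giving the contradiction. Your write-up merely makes explicit the bookkeeping (the cycles $Z_i$, $T$, and their exact squares) that the paper's proof states tersely.
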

	
	\begin{proof}
		It is enough to observe that $M_{\ell}(C_{2k+1})$ is obtained from the $l\times (2k+1)$ cylindrical grid of Figure~\ref{fig:ell-Times-2k+1} by adding 
  nearly diagonal edges to the bottom layer (that is connecting pairs at a distance $k$ of the grey cycle) and adding a universal vertex to the top layer (as mentioned in the previous section). As any circular $r$-coloring with $r<4$ is also far-polar, any such a coloring would imply an odd winding number for the layers in $c^{sh}$ extension from one end and an even winding number for the layers from the other end. So a proper mapping to $O_r$ where $r<4$ is impossible. On the other hand, one can easily color $M_{\ell}(C_{2k+1})$ with $4$ colors, which gives the upper bound $4$ on the circular chromatic number as well.
	\end{proof}

	Next, we show that $\widehat{BQ}(\ell, 2k+1)$ shares the same property. We will note later that Theorem~\ref{thm:ChiMlC2k+1} follows from the next theorem.
	
	\begin{theorem}\label{thm:ChiBMl2k+1}
		For given positive integers $\ell$ and $k$, satisfying $l \geq 2$ and $k \geq 1$, we have $\chi_c(\widehat{BQ}(\ell, 2k+1))=4$.
	\end{theorem}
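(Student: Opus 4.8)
The upper bound $\chi_c(\widehat{BQ}(\ell,2k+1))\le 4$ is immediate, since $\widehat{BQ}(\ell,2k+1)$ is a signed bipartite graph (the parity of the layer index, with $u$ placed on the side of layer $\ell$, is a bipartition). So the whole content is the lower bound: I would assume toward a contradiction that $c$ is a circular $r$-coloring with $r<4$, noting that by Observation~\ref{obs : far-polar coloring} the restriction of $c$ to every cycle of the underlying graph is far-polar, so that the machinery of Lemmas~\ref{lem:NonCrossingAlwaysEven}--\ref{coro:far-polarC_2k+1^2} applies.

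The ``outer'' half of the argument is carried out exactly as in the proof of Theorem~\ref{thm:ChiMlC2k+1}. For $1\le i\le\ell$ let $L_i$ be the cycle $v_{_{i,1}}v_{_{i,2}}\cdots v_{_{i,2k+1}}$ on the $i$-th layer in its natural cyclic order. For each $i<\ell$ the zigzag between layers $i$ and $i+1$ is an all-negative even cycle $(C_{4k+2},-)$ whose two exact-square components are exactly $L_i$ and $L_{i+1}$. Since $u$ is joined to all of layer $\ell$ by negative edges, Observation~\ref{obs:AddingUniversalVertex} gives $\omega(L_\ell,c^{sh})=0$, and Lemma~\ref{lem:WC_2k^2} applied to each zigzag gives $\omega(L_i,c^{sh})\equiv\omega(L_{i+1},c^{sh})\pmod 2$; iterating downward, $\omega(L_1,c^{sh})$ is \emph{even}.

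For the ``inner'' half, the M\"obius ladder on the first two layers has to take the place of the single odd cycle $C'$ used inside $M_\ell(C_{2k+1})$. Here the key is that, because $r<4$, the two endpoints of each positive rung $v_{_{1,j}}v_{_{2,j+k}}$ receive colours within $\tfrac r2-1<1$ of one another, so one may collapse all the rungs: identifying $v_{_{1,j}}$ with $v_{_{2,j+k}}$ for each $j$, the rungs become (harmless positive) loops and a short index computation shows that the zigzag edges of the ladder all become edges of the circulant $\mathrm{Circ}(2k+1;k)$ — that is, the M\"obius ladder collapses to an all-negative $C_{2k+1}$ (with doubled edges and positive loops). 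This $C_{2k+1}$ is the signed-bipartite avatar of $C'$: the collapsed colour of the class $\{v_{_{1,j}},v_{_{2,j+k}}\}$ differs from $c(v_{_{1,j}})$ by less than $1$, and the exact square of this $C_{2k+1}$ is, up to orientation, the cycle $L_1$ coloured by $c$. Applying the odd-winding principle of Lemma~\ref{coro:far-polarC_2k+1^2} to $C'$ with the collapsed colouring then forces $\omega(L_1,c^{sh})$ to be \emph{odd}, contradicting the outer half; hence no circular $r$-coloring with $r<4$ exists, and $\chi_c(\widehat{BQ}(\ell,2k+1))=4$.

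The step I expect to be the main obstacle is this last one. Collapsing a rung moves colours by less than $1$ but not by $0$, so the colouring of $C'$ obtained after the collapse is not literally a circular $r'$-coloring with $r'<4$ of the odd cycle $C_{2k+1}$ — the only guarantee is that its ``negative'' edges realise a gap $2-\tfrac r2>0$ — and so Observation~\ref{obs : far-polar coloring} does not apply verbatim. One must check by hand that this colouring of $C'$ is nevertheless far-polar (equivalently, that the position of $c(v_{_{1,j}})$ near $c(v_{_{2,j+k}})$ together with the relaxed gap is compatible with the far-polar condition), so that the green/orange parity count of Lemma~\ref{lem:NonCrossingAlwaysEven}, and hence Lemma~\ref{coro:far-polarC_2k+1^2}, go through. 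This is the one genuinely new ingredient compared with the proof of Theorem~\ref{thm:ChiMlC2k+1}, where $C'$ is an honest subgraph of the graph being coloured.
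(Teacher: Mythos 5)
Your upper bound and your ``outer half'' are exactly right and coincide with the paper: the universal vertex forces winding number $0$ on the last layer (Observation~\ref{obs:AddingUniversalVertex}), and Lemma~\ref{lem:WC_2k^2} applied to the zigzag cycles propagates the parity down to the first layer, so $\omega(L_1,c^{sh})$ must be even. The problem is the ``inner half'', and you have flagged it yourself: the step you call the main obstacle --- verifying that the relevant mapping of an odd cycle is far-polar so that Lemma~\ref{coro:far-polarC_2k+1^2} applies --- is precisely the heart of the theorem, and your proposal does not carry it out. As written, the argument is therefore incomplete. Moreover, the collapse detour does not just relocate this verification, it creates extra debts you never address: after identifying $v_{_{1,j}}$ with $v_{_{2,j+k}}$ the quotient ``colouring'' is not well defined (the two endpoints of a rung get colours that agree only up to $\tfrac r2-1$, so you must choose a representative), the resulting map on the collapsed $C_{2k+1}$ is not a circular $r$-colouring of $(C_{2k+1},-)$, and even granting far-polarity you would still need to argue that the winding parity of its exact square equals that of $L_1$ coloured by $c$ --- perturbing points by less than $1$ can move them across the reference interval $I$, so ``up to orientation, the cycle $L_1$ coloured by $c$'' is not literally true.

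The paper closes this gap without any collapse, and the fix is short enough that you should compare. Work with the diagonal cycle $C^{\star}$ on the first-layer vertices, in which $v_{_{1,j}}$ is followed by $v_{_{1,j+k}}$ (indices mod $2k+1$), and show that $c$ itself, restricted to $C^{\star}$, is far-polar. For the consecutive triple $v_{_{1,j}},v_{_{1,j+k}},v_{_{1,j+2k}}$ use the vertex $v_{_{2,j+2k}}$: it is joined to $v_{_{1,j}}$ and $v_{_{1,j+2k}}$ by negative edges and to $v_{_{1,j+k}}$ by a positive rung. The two negative edges force $c(v_{_{2,j+2k}})$ onto an arc of length at least $2>\tfrac r2$ determined by $c(v_{_{1,j}})$ and $c(v_{_{1,j+2k}})$, i.e.\ onto the larger arc; and if $c(v_{_{1,j+k}})$ lay on the smaller arc, then each of the two arcs joining it to $c(v_{_{2,j+2k}})$ would contain one of the shorter arcs (each of length at least $1$) guaranteed by the negative edges, so $c(v_{_{1,j+k}})$ would be at distance at least $1$ from $c(v_{_{2,j+2k}})$, contradicting the positive-edge requirement that this distance be at most $\tfrac r2-1<1$. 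Hence $c$ is far-polar on $C^{\star}$; since $L_1=C'$ is the exact square of $C^{\star}$, Lemma~\ref{coro:far-polarC_2k+1^2} gives $\omega(C',c^{sh})$ odd, contradicting the outer half --- with the original colouring $c$ throughout and no new far-polar criterion needed.
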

	
	\begin{proof}
		Towards a contradiction, let $c$ be a circular $r$-coloring of $\widehat{BQ}(\ell, 2k+1)$ with $r<4$. We will have a contradiction if we show that the cycle $C'$ formed on $v_{_{1,1}}v_{_{1,2}}\cdots v_{_{1,2k+1}}$ in this cyclic order has an odd winding number under the mapping $c^{sh}$ (restricted on the vertices of this cycle). We emphasize that edges of $C'$ are not in $\widehat{BQ}(\ell, 2k+1)$.
		
		To this end we first consider another cycle, $C^{\star}$, (also not part of our graph) by considering the following sequence of vertices of the first layer of $\widehat{BQ}(\ell, 2k+1)$: $v_{_{1,1}}v_{_{1,k+1}}v_{_{1,2}}v_{_{1,k+2}}\cdots v_{_{1,k+2}}$. Note that in this cycle $v_{_{1,j}}$ is followed by $v_{_{1,j+k}}$ where the addition is taken $\pmod{2k+1}$. We may also note that this is the diagonally drawn cycle on the first layer of Figure~\ref{fig:CylinderPlusCycle} (right). 
		
		Our claim is that the mapping $c$, viewed as a mapping of the vertices of $C^{\star}$ to $O_r$, is a far-polar mapping. 
		Toward proving the claim, we consider $c(v_{_{1,j}})$, $c(v_{_{1,j+k}})$, and $c(v_{_{1,j+2k}})$. The first observation is that since $v_{_{2,j+2k}}$ is adjacent to both $v_{_{1,j}}$ and $v_{_{1,j+2k}}$ with negative edges, the points $c(v_{_{1,j}})$ and $c(v_{_{1,j+2k}})$ of $O_r$ partition $O_r$ in such a way that the part containing $c(v_{_{2,j+2k}})$ is at least 2. As $r<4$, it follows that $c(v_{_{2,j+2k}})$ is on the larger part of $O_r$ when it is partitioned by $c(v_{_{1,j}})$ and $c(v_{_{1,j+2k}})$. It remains to show that $1) toc(v_{_{1,j+k}})$ is also on the same part. If not, that is if $c(v_{_{1,j+k}})$ is on the shorter side of $O_r$, then one of the arcs $c(v_{_{1,j+k}})c(v_{_{2,j+2k}})$ and  $c(v_{_{2,j+2k}})c(v_{_{1,j+k}})$ contains the shorter side of $c(v_{_{1,j}})c(v_{_{2,j+2k}})$ and the other contains the shorter side of $c(v_{_{1,j+1}})c(v_{_{2,j}})$. As each of these shorter arcs is of length at least one, we conclude that the distance of $c(v_{_{1,j+k}})$ and  $c(v_{_{2,j+2k}})$ is at least one. However, since $c$ is a circular $r$-coloring where $r<4$ and $v_{_{1,j+k}}v_{_{2,j+2k}}$ is a positive edge, they should be at a distance at most $\frac{r}{2}-1<1$, a contradiction.
		
		Finally, observing that $C'$ is the exact square of $C^{\star}$, and by Lemma~\ref{coro:far-polarC_2k+1^2}, we conclude that the winding number of $C'$ is odd. 
	\end{proof}

	 To prove that $\chi_{c}(\widehat{BQ}(\ell,2k))=4$, we need a few more lemmas.
 

	\begin{lemma}\label{lem: C_4}
		Let $c$ be a far-polar mapping of $C_4$ to $O_r$. Then the winding number $w(C_4, c^D)$ is $2$.
	\end{lemma}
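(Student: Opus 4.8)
The plan is to reduce the statement to a small, finite case check organized around how the four images sit on $O_r$. Write $P_i := c(v_i)$ for $i=1,2,3,4$. The first observation I would make is that in $C_4$ the vertices $v_1$ and $v_3$ have the \emph{same} pair of neighbours, namely $\{v_2,v_4\}$, and symmetrically the neighbours of $v_2$ and of $v_4$ are $\{v_1,v_3\}$. Hence the four far-polar conditions collapse to just two: $P_1$ and $P_3$ both lie in the interior of the larger of the two arcs of $O_r$ cut off by $\{P_2,P_4\}$, and $P_2$ and $P_4$ both lie in the interior of the larger arc cut off by $\{P_1,P_3\}$. From this I would extract two facts: each $P_i$ differs from the images of its two neighbours (so there are no degeneracies among consecutive images), and — the crucial point — $\{P_1,P_3\}$ and $\{P_2,P_4\}$ \emph{do not interleave} on $O_r$: each pair lies inside a single arc determined by the other pair. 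Equivalently, up to a rotation of $O_r$ and up to swapping the two points inside each pair, the clockwise cyclic order of $P_1,P_2,P_3,P_4$ is $P_1,P_3,P_2,P_4$.

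Next I would compute the winding number with the same bookkeeping device used in Lemma~\ref{lem:NonCrossingAlwaysEven} and Observation~\ref{obs:AddingUniversalVertex}: fix an open interval $I\subseteq O_r$ missing every $P_i$. Since in the extension $c^D$ every edge of the directed cycle $C_4$ is traversed clockwise, $\omega(C_4,c^D)$ is simply the number of the four clockwise arcs $P_1\!\to\!P_2$, $P_2\!\to\!P_3$, $P_3\!\to\!P_4$, $P_4\!\to\!P_1$ that sweep across $I$. Using the cyclic order $P_1,P_3,P_2,P_4$ there are only four positions for $I$, and in each case a direct check shows that exactly two of those four arcs cross $I$; hence $\omega(C_4,c^D)=2$. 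Alternatively, and this is the version I would most likely write down since it avoids naming the cyclic order, I would split the directed $C_4$ into the two subpaths $v_1v_2v_3$ and $v_3v_4v_1$, note that under $c^D$ each traces a clockwise walk on $O_r$ from $P_1$ to $P_3$ (resp.\ from $P_3$ to $P_1$), and observe that such a walk through an intermediate point $x$ has length equal to the clockwise $P_1$-to-$P_3$ arc, plus one extra full turn exactly when $x$ lies on the complementary clockwise arc. Since $P_2$ and $P_4$ lie in the \emph{same} arc of $\{P_1,P_3\}$, precisely one of the two walks acquires the extra turn, so their lengths sum to $r+r=2r$, giving $\omega(C_4,c^D)=2$.

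Finally I would clean up the coincidence cases. If $P_1=P_3$ (symmetrically if $P_2=P_4$), then the $c^D$-image of $v_1v_2v_3$ runs clockwise from $P_1$ to $P_2\neq P_1$ and then clockwise back to $P_1=P_3$, hence is a closed clockwise loop whose length is a positive multiple of $r$ strictly below $2r$, i.e.\ exactly $r$; the same holds for $v_3v_4v_1$, so the total is again $2r$. I would also remark that far-polarity is genuinely used: dropping it allows the interleaved order $P_1,P_2,P_3,P_4$ (or $P_1,P_4,P_3,P_2$), for which the identical count returns winding number $1$ (resp.\ $3$).

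I do not expect a real obstacle here. The statement is essentially a finite verification once it is reorganized around the non-interleaving property forced by far-polarity; the only things that need genuine care are the orientation bookkeeping — keeping "clockwise $=$ positive" consistent so that the value comes out $+2$ rather than $-2$ — and the mild degeneracies (coincident non-adjacent images, and the exact meaning of "the larger arc" when two images coincide), both of which are disposed of as above.
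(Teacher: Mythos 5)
Your argument is correct and is essentially the paper's own proof: far-polarity at $v_2$ and $v_4$ forces $c(v_2),c(v_4)$ into the same (larger) arc determined by $c(v_1),c(v_3)$, giving the non-interleaved cyclic order $c(v_1),c(v_3),c(v_2),c(v_4)$, after which counting the directed arcs of $c^D$ that cross a test interval $I$ yields exactly two crossings and winding number $2$. Your two-subpath variant and the explicit treatment of the degenerate case $c(v_1)=c(v_3)$ are fine additions but do not change the substance; the paper handles these implicitly via its ``without loss of generality'' choice of cyclic order.
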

	
	\begin{proof}
		The points $c(v_1)$ and $c(v_3)$ partition $O_r$ into two unequal parts. Since $c$ is a far-polar mapping, we know that $c(v_2)$ and $c(v_4)$ both should be on the larger of these two parts. Without loss of generality, we may assume that images of the vertices are in the following cyclic order: $c(v_1), c(v_3), c(v_2), c(v_4)$. Let $I$ be an interval of $O_r$ in $c(v_3)c(v_2)$ that does not contain any $c(v_i)$ (the winding number is independent of the choice of $I$). Then following the orientation of $C_4$ and clockwise direction of $O_r$, the arcs $c(v_1)c(v_2)$ and $c(v_3)c(v_4)$ contain the interval $I$ while the arcs $c(v_2)c(v_3)$ and $c(v_4)c(v_1)$ do not intersect it. 
	\end{proof}
	
	\begin{lemma}\label{lem:o-g parity}
		Let $c$ be a far-polar mapping of $C_{2k}$ to $O_r$ and let the edges of $C_{2k}$ be $e_1, e_2, \dots, e_{2k}$. The number of edges colored green in the extension $c^D$ of $c$ has the same parity as the number of odd (or even) indexed vertices being incident to both green and orange edges.
	\end{lemma}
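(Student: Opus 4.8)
The plan is to strip the statement down to an elementary parity identity in $\mathbb{F}_2$; the far-polar hypothesis and the geometry of $O_r$ play essentially no role here, since all that is really used is that the edges of $C_{2k}$ have received the green/orange two-coloring (via a fixed interval $I$ and the extension $c^D$). First I would fix the natural labelling in which $e_i = v_iv_{i+1}$ with all indices read modulo $2k$, and introduce $g_i \in \{0,1\}$ with $g_i = 1$ exactly when $e_i$ is green. Then the number of green edges is congruent mod $2$ to $\sum_{i=1}^{2k} g_i$, and the vertex $v_i$, being incident precisely to $e_{i-1}$ and $e_i$, is incident to both a green and an orange edge if and only if $g_{i-1} + g_i \equiv 1 \pmod 2$.

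Next I would evaluate the parity of the number of odd-indexed bichromatic vertices. These are the vertices $v_{2j-1}$ for $j = 1, \dots, k$, so their count is congruent modulo $2$ to
\[
\sum_{j=1}^{k} \bigl(g_{2j-2} + g_{2j-1}\bigr).
\]
The key observation is that as $j$ runs over $1, \dots, k$, the indices $2j-2$ and $2j-1$ together run through $0, 1, 2, \dots, 2k-1$, each value occurring exactly once; using the cyclic convention $g_0 = g_{2k}$, this sum therefore equals $\sum_{i=1}^{2k} g_i$, i.e.\ the number of green edges mod $2$. The identical bookkeeping applied to the even-indexed vertices $v_{2j}$ ($j = 1, \dots, k$) gives $\sum_{j=1}^{k}\bigl(g_{2j-1} + g_{2j}\bigr) = \sum_{i=1}^{2k} g_i$, which settles the parenthetical ``(or even)'' variant at the same time.

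There is no genuinely hard step: the only point requiring care is the cyclic index convention (that $e_0 = e_{2k}$, equivalently $g_0 = g_{2k}$), which is exactly what makes each of the two ``halves'' of the color-change count around the cycle reassemble into the full sum $\sum_i g_i$ rather than a proper subsum. If a more geometric phrasing is wanted, one may note that in the $c^D$ extension every edge is traversed clockwise, so $\omega(C_{2k}, c^D)$ literally counts the green edges, and the lemma then reads $\omega(C_{2k}, c^D) \equiv \#\{\text{bichromatic odd-indexed vertices}\} \pmod 2$; but the $\mathbb{F}_2$ computation above is the cleanest route and is the one I would write up.
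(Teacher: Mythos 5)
Your proof is correct, but it takes a different route from the paper's. The paper argues structurally: it partitions the green edges into maximal green paths, notes that each such path has its two endpoints in the set $J$ of bichromatic vertices, and tracks how even-length paths contribute $0$ or $2$ to each of $J_o$, $J_e$ while odd-length paths contribute $1$ to each, so the parity of $|J_o|$ (and $|J_e|$) is governed by the number of odd-length green runs, which in turn governs the parity of the total green count. Your argument bypasses this decomposition entirely with an $\mathbb{F}_2$ computation: the indicator that $v_{2j-1}$ is bichromatic is exactly $g_{2j-2}+g_{2j-1} \bmod 2$, and since every edge of the even cycle has exactly one odd-indexed endpoint, summing over $j$ counts each $g_i$ precisely once, giving $\sum_i g_i$ on the nose. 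This is shorter, treats the odd and even variants symmetrically in one line, and makes explicit a point the paper leaves implicit, namely that the far-polar hypothesis and the geometry of $O_r$ are irrelevant to this lemma (only the existence of the green/orange edge coloring is used); the only care needed is the cyclic convention $e_0=e_{2k}$, which you handle. The paper's run-based argument, on the other hand, is the phrasing that dovetails with how the surrounding lemmas (e.g.\ the count of color-change vertices being even) are stated, so either write-up would serve, with yours being the more economical.
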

	
	\begin{proof}
		Let $J$ be the set of vertices of $C_{2k}$ incident to both green and orange edges. Let $J_o$ and $J_e$ be the partition of $J$ into odd and even indexed vertices (a natural bipartition of on $C_{2k}$).
		
		Consider a maximal green path in $C_{2k}$. Thus the two ends of each such path are in $J$. Moreover, if the length of the path is even, then both ends of the path belong to the same subset $J_o$ or $J_e$ of $J$. Thus each even green path contributes $0$ to one of $J_o$ or $J_e$ and $2$ to the other. If the length of the path is odd, then one of its ends is in $J_o$ and the other is in $J_e$, thus contributing $1$ to each of these two sets. The claim then follows as the odd length green-paths determine the parity of the total number of green edges.
	\end{proof}
	
	\begin{lemma}\label{lem: 4k zigzag greens and winding number}
		Let $c$ be a far-polar mapping of the cycle $C_{4k}$. The number of edges colored green in the extension $c^D$ of $c$ and the winding number $w(C_{4k}^{\#2e}, c^{sh})$ (or similarly $w(C_{4k}^{\#2o}, c^{sh})$) are of the same parity.
		
	\end{lemma}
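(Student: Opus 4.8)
The plan is to combine the edge-by-edge dichotomy already established inside the proof of Lemma~\ref{lem:NonCrossingAlwaysEven} with the green-path parity count of Lemma~\ref{lem:o-g parity}. Fix an open interval $I$ of $O_r$ containing no image $c(v_i)$, and color each edge of $C_{4k}$ green or orange according to whether its image under the $c^D$ extension does, or does not, contain $I$. Let $g$ denote the number of green edges, let $J$ be the set of vertices of $C_{4k}$ incident with one green and one orange edge, and write $J=J_o\cup J_e$ for its partition into odd- and even-indexed vertices under the natural bipartition of $C_{4k}$. Since $C_{4k}=C_{2(2k)}$, applying Lemma~\ref{lem:o-g parity} gives $g\equiv|J_o|\equiv|J_e|\pmod 2$.

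Next I would translate the sets $J_o$ and $J_e$ into crossing data for the two components of $C_{4k}^{\#2}$. Recall from the first paragraph of the proof of Lemma~\ref{lem:NonCrossingAlwaysEven} that, for three consecutive vertices $v_{i-1},v_i,v_{i+1}$, the edge $v_{i-1}v_{i+1}$ of $C_{4k}^{\#2}$ fails to cross $I$ in the $c^{sh}$ extension precisely when $v_i$ is incident with both a green and an orange edge. The edges of $C_{4k}^{\#2e}$ are exactly the chords $v_{i-1}v_{i+1}$ with $i$ odd, and $v_i\mapsto v_{i-1}v_{i+1}$ is a bijection between odd-indexed vertices and edges of $C_{4k}^{\#2e}$; hence the number of edges of $C_{4k}^{\#2e}$ not crossing $I$ equals $|J_o|$. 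As $C_{4k}^{\#2e}$ has $2k$ edges, the number that do cross $I$ is $2k-|J_o|$. The winding number $w(C_{4k}^{\#2e},c^{sh})$ is the number crossing $I$ clockwise minus the number crossing it anticlockwise, which has the same parity as the total number crossing $I$; therefore $w(C_{4k}^{\#2e},c^{sh})\equiv 2k-|J_o|\equiv|J_o|\pmod 2$.

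Combining the two congruences yields $g\equiv|J_o|\equiv w(C_{4k}^{\#2e},c^{sh})\pmod 2$, which is the claim for the even-indexed subcycle; the odd-indexed case is identical, replacing $J_o$ by $J_e$ and using the chords $v_{i-1}v_{i+1}$ with $i$ even. I do not anticipate a genuine obstacle here: the only points requiring care are the identification of ``a vertex incident with a green and an orange edge'' with ``a chord of the exact square that stays on the short arc'', which is exactly the trichotomy proved inside Lemma~\ref{lem:NonCrossingAlwaysEven}, and the elementary observation that replacing the signed count (clockwise minus anticlockwise) by the unsigned count (total crossings) changes the value by an even number and hence preserves parity.
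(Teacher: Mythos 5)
Your argument is correct and is essentially the paper's own proof: both identify the non-crossing edges of $C_{4k}^{\#2e}$ with the odd-indexed vertices incident to one green and one orange edge (via the case analysis in Lemma~\ref{lem:NonCrossingAlwaysEven}), invoke Lemma~\ref{lem:o-g parity} to equate that count's parity with the number of green edges, and then use that $C_{4k}^{\#2e}$ has an even number of edges so that the crossing count, and hence the winding number, has the same parity. No gaps; the write-up matches the paper's reasoning step for step.
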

	
	\begin{proof}

		For each edge $v_{i-1}v_{i+1}$ of $C_{4k}^{\#2e}$ there is an odd indexed vertex $v_i$ of $C_{4k}$ corresponding to it. (And similarly, for each edge $v_{i-1}v_{i+1}$ of $C_{4k}^{\#2o}$ there is an even indexed vertex $v_i$ of $C_{4k}$.) As we have observed before, an edge $v_{i-1}v_{i+1}$ in $C^{\#2}$ does not cross over the interval $I$ in the $c^{sh}$ extension if and only if the edges incident to $v_i$ are colored differently (i.e. one of $v_{i-1}v_i$ and $v_iv_{i+1}$ is green, the other is orange). So the number of non-crossing edges in $C_{4k}^{\#2e}$ in the $c^{sh}$ extension is just the number of odd indexed vertices being incident to both green and orange edges in $C_{4k}$. From the previous lemma, we know that this number has the same parity as the total number of green edges in the cycle $C_{4k}$. As $C_{4k}^{\#2e}$ is a cycle on $2k$ vertices, its total number of edges is an even number, so the total number of edges which cross $I$ should also have the same parity, and so does the difference of the number of edges that cross $I$ in the clockwise direction and anticlockwise direction. This completes the proof.
	\end{proof}
	
	We use $M_{2k}$ to denote the M\"obuis ladder with $2k$ steps. As a graph that is isomorphic to the graph build on $C_{4k}$ by adding an edge between each pair of vertices at a distance $2k$. In the next lemma we show that the  M\"obuis ladder $M_{2k}$ can replace the role of the odd cycle in Lemma~\ref{coro:far-polarC_2k+1^2}. It can then be used similarly to build families of graphs with circular chromatic number at least 4.  
	
	\begin{lemma}\label{lem: mobius ladder}
		For any far-polar mapping $c$ of $M_{2k}$ to $O_r$, the winding number $w(C_{4k}^{\#2e}, c^{sh})$ (or similarly $w(C_{4k}^{\#2o}, c^{sh})$) is odd. 
	\end{lemma}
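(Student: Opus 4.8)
The plan is to derive the statement by combining Lemma~\ref{lem: C_4} with Lemma~\ref{lem: 4k zigzag greens and winding number}. Indeed, Lemma~\ref{lem: 4k zigzag greens and winding number} reduces the claim to showing that the number of \emph{green} edges of the underlying cycle $C_{4k}$ (in the $c^{D}$ extension) is odd, and Lemma~\ref{lem: C_4}, applied to the $2k$ quadrilaterals of $M_{2k}$, will supply exactly this parity. Throughout we use that a far-polar mapping of $M_{2k}$ restricts to a far-polar mapping of the subgraph $C_{4k}$ and of each of the $4$-cycles introduced below, since separating a vertex from \emph{all} of its neighbours only becomes easier in a subgraph.

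Fix an interval $I$ of $O_r$ that contains no image $c(v_i)$. Far-polarity forces $c(x)\neq c(y)$ whenever $xy$ is an edge of $M_{2k}$ (a diameter cannot separate a point from itself), so the two clockwise arcs $c(x)\to c(y)$ and $c(y)\to c(x)$ partition $O_r$ up to the two endpoints, and exactly one of them meets $I$; write $n(x\to y)\in\{0,1\}$ for the indicator that the clockwise arc from $c(x)$ to $c(y)$ meets $I$, so that $n(x\to y)+n(y\to x)=1$. Reading all indices modulo $4k$, set $e_m:=n(v_m\to v_{m+1})$ and $\rho_m:=n(v_m\to v_{m+2k})$; thus the edge $v_mv_{m+1}$ of $C_{4k}$ is green exactly when $e_m=1$, the number of green edges of $C_{4k}$ is $g=\sum_{m=1}^{4k}e_m$, and $\rho_{m+2k}=1-\rho_m$.

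For $j=1,\dots,2k$ let $F_j$ be the $4$-cycle $v_j\,v_{j+1}\,v_{j+2k+1}\,v_{j+2k}$ of $M_{2k}$; its edges $v_jv_{j+1}$ and $v_{j+2k}v_{j+2k+1}$ lie on $C_{4k}$, while $v_jv_{j+2k}$ and $v_{j+1}v_{j+2k+1}$ are rungs. By Lemma~\ref{lem: C_4}, $w(F_j,c^{D})=2$ for the orientation $v_j\to v_{j+1}\to v_{j+2k+1}\to v_{j+2k}\to v_j$; since $c^{D}$ traverses each of the four edges along a clockwise arc of length less than $r$, this winding number equals the number of those arcs that meet $I$, namely $n(v_j\to v_{j+1})+n(v_{j+1}\to v_{j+2k+1})+n(v_{j+2k+1}\to v_{j+2k})+n(v_{j+2k}\to v_j)$. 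Using $n(x\to y)=1-n(y\to x)$ and reading indices modulo $4k$ — in particular $v_{j+2k+1}=v_{(j+1)+2k}$ and $v_j=v_{(j+2k)+2k}$ — these four summands are $e_j$, $\rho_{j+1}$, $1-e_{j+2k}$ and $1-\rho_j$, so $w(F_j,c^{D})=2$ becomes
\[
e_j-e_{j+2k}=\rho_j-\rho_{j+1}\qquad(j=1,\dots,2k).
\]
Summing over $j$, the right side telescopes to $\rho_1-\rho_{2k+1}=\rho_1-(1-\rho_1)=2\rho_1-1$, an odd number, while the left side equals $\sum_{m=1}^{2k}e_m-\sum_{m=2k+1}^{4k}e_m$. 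Hence $\sum_{m=1}^{2k}e_m$ and $\sum_{m=2k+1}^{4k}e_m$ have opposite parities, so $g=\sum_{m=1}^{4k}e_m$ is odd. Finally, by Lemma~\ref{lem: 4k zigzag greens and winding number} the numbers $g$ and $w(C_{4k}^{\#2e},c^{sh})$ (likewise $w(C_{4k}^{\#2o},c^{sh})$) have the same parity, so $w(C_{4k}^{\#2e},c^{sh})$ is odd, as required.

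The step requiring the most care is the collapse of $w(F_j,c^{D})=2$ to $e_j-e_{j+2k}=\rho_j-\rho_{j+1}$: one must identify the four arcs of $F_j$ with indices taken modulo $4k$ and apply $n(x\to y)+n(y\to x)=1$ to the two arcs of $F_j$ running backwards (one along $C_{4k}$, one along a rung). The one feature that makes the argument work is the wrap-around $\rho_{2k+1}=1-\rho_1$, which is what keeps the $\rho$-sum from telescoping to $0$ and hence produces the odd value $2\rho_1-1$; this is the combinatorial reflection of the non-bipartiteness of $M_{2k}$ (the rungs join vertices in the same class of $C_{4k}$). Everything else is elementary index arithmetic.
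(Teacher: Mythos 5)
Your proposal is correct and follows essentially the same route as the paper: reduce via Lemma~\ref{lem: 4k zigzag greens and winding number} to the parity of the green edges of $C_{4k}$, decompose $M_{2k}$ into the $2k$ facial $4$-cycles, and apply Lemma~\ref{lem: C_4} to each. The only difference is bookkeeping — you encode the crossings by indicator variables and telescope, where the paper re-counts green edges by switching orientations and tracking the rung contributions — and your wrap-around identity $\rho_{2k+1}=1-\rho_1$ plays exactly the role of the step $v_1v_{2k+1}$ appearing twice with the same orientation in the paper's count.
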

	
	\begin{proof}
		By Lemma~\ref{lem: 4k zigzag greens and winding number}, it is enough to prove that the number of green edges of $C_{4k}$ in the extension $c^D$ of $c$ is odd. 
		We use the notation $C_{1,2,3,\cdots t}$ for oriented cycle with vertices $v_1,v_2, \cdots v_t$ and directed  edges $\overrightarrow{v_iv_{i+1}}$ for $i \pmod{t}$. We will view $M_{2k}$ as union of $2k$ 4-cycles, see Figure \ref{fig:mobius_4k} for reference.
		
		Consider all oriented $4$-cycles formed by two consecutive steps of ladder, $C_1 : C_{1,2,(2k+2),(2k+1)}$, $C_2 : C_{2,3,(2k+3),(2k+2)}$, $\cdots C_{2k} : C_{2k,(2k+1),1,4k}$. By Lemma~\ref{lem: C_4} we know that each $C_i$'s has two green edges in $c^D$ extension. Therefore, in total, the sum of the number of their green edges is an even number as well.
		To prove our claim, we present a different counting of this number. Consider the oriented $4k$-cycle, $C_{1,2,3,\cdots 4k}$, half of its edges (from $\overrightarrow{v_1v_2}$ to $\overrightarrow{v_{2k}v_{2k+1}}$) agree in orientation with the one in the corresponding $C_i$, but the other half is oriented the opposite direction. So if we want to get back the same orientation as in the $C_i$'s, we should switch $2k$ edges. As changing the orientation of a green edge makes it orange and vice versa, we switch the parity of the number of green edges an even number of times. Now we have to consider the steps of the ladder as well. Except for the edge between $v_1$ and $v_{2k+1}$, every other step $v_iv_{2k+i}$ is oriented as $\overrightarrow{v_{2k+i}v_i}$ in $C_i$ and as $\overrightarrow{v_iv_{2k+i}}$ in $C_{i-1}$ (for $1<i \leq 2k$). So they contribute exactly one green edge (in one of their orientations) to the total sum. The edge between $v_1$ and $v_{2k+1}$ is oriented as $\overrightarrow{v_{2k+1}v_1}$ in both $C_1$ and $C_{2k}$, contributing 0 or 2 to the total sum. Therefore the contribution of the steps is odd in total.
		So, in summary, starting with the oriented $C_{4k}$, changing the orientation of an even number of its edges, and then adding the steps of the ladder, we should get back the same number of green edges as we had in total in the $C_i$'s. Since that is an even number, the oriented $C_{4k}$ must have an odd number of green edges. 
	\end{proof}
	
	\begin{figure}[ht!]
		\centering     
		\begin{tikzpicture}
			\begin{scope}[very thick] 
				
				\draw[line width=1mm, red] (0, 0) -- (2, 0);
				\draw[line width=1mm, red] (0, 2) -- (2, 2);
				\draw[line width=.35mm, red] (0, 0) -- (0, 2);
				\draw[line width=.35mm, red] (2, 0) -- (2, 2);
				
				\draw[line width=1mm, red] (2, 0) -- (4, 0);
				\draw[line width=1mm, red] (2, 2) -- (4, 2);
				\draw[line width=.35mm, red] (4, 0) -- (4, 2);
				
				\draw[line width=1mm, red] (4, 0) -- (6, 0);
				\draw[line width=1mm, red] (4, 2) -- (6, 2);
				\draw[line width=.35mm, red] (6, 0) -- (6, 2);
				
				\draw[line width=1mm, red] (6, 0) -- (8, 0);
				\draw[line width=1mm, red] (6, 2) -- (8, 2);
				\draw[line width=.35mm, red] (8, 0) -- (8, 2);
				
				\draw[line width=1mm, red] (12, 0) -- (14, 0);
				\draw[line width=1mm, red] (12, 2) -- (14, 2);
				\draw[line width=.35mm, red] (14, 0) -- (14, 2);
				\draw[line width=.35mm, red] (12, 0) -- (12, 2);
				
				\draw[very thick, red,dotted] (8,0) -- (12,0);
				\draw[very thick,red,dotted] (8,2) -- (12,2);
				
				\draw[line width=1mm, red] (0, 0) -- (14, 2);
				\draw[line width=1mm, red] (0, 2) -- (14, 0);


				\node at (1,1) {\small $\mathbf{C_1}$};
				\node at (3,1) {\small $\mathbf{C_2}$};
				\node at (5,1) {\small $\mathbf{C_3}$};
				\node at (13,1) {\small $\mathbf{C_{2k}}$};
				\node at (0,2.3) {$v_1$};
				\node at (2,2.3) {$v_2$};
				\node at (4,2.3) {$v_3$};
				\node at (6,2.3) {$v_4$};
				\node at (12,2.3) {$v_{2k-1}$};
				\node at (14,2.3) {$v_{2k}$};
				\node at (0,-0.4) {$v_{2k+1}$};
				\node at (2,-0.4) {$v_{2k+2}$};
				\node at (4,-0.4) {$v_{2k+3}$};
				\node at (6,-0.4) {$v_{2k+4}$};
				\node at (12,-0.4) {$v_{4k-1}$};
				\node at (14,-0.4) {$v_{4k}$};

				\draw [-latex, thick]
				(.6,.6) 
				++(0:4mm)  
				arc (270:00:4mm);
				
				\draw [-latex, thick]
				(2.6,.6) 
				++(0:4mm)  
				arc (270:00:4mm);
				
				\draw [-latex, thick]
				(4.6,.6) 
				++(0:4mm)  
				arc (270:00:4mm);
				
				
				\draw [-latex, thick]
				(12.6,.6) 
				++(0:4mm)  
				arc (270:00:4mm);
				
			\end{scope}
		\end{tikzpicture}
		\caption{M\"obius ladder $M_{2k}$.}
		\label{fig:mobius_4k}
	\end{figure}
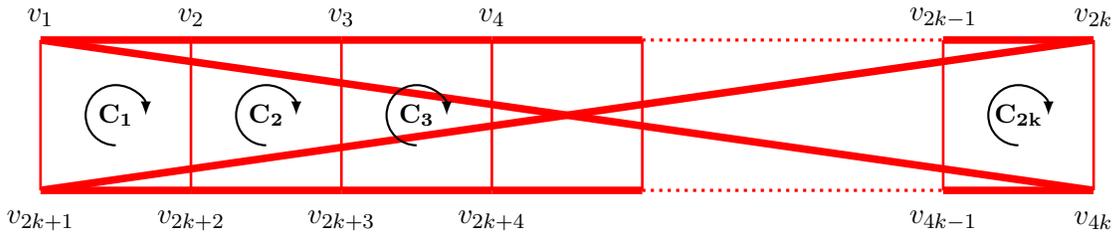
	
	We can now state our theorem for $\widehat{BQ}({\ell},2k)$.
	\begin{theorem}\label{thm:WidningC4}
		For any positive integers $\ell$ and $k$, we have $\chi_c\widehat{BQ}({\ell},2k)=4$.
	\end{theorem}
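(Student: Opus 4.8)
The plan is to follow the template of Theorem~\ref{thm:ChiMlC2k+1}, with the M\"obius ladder $M_{2k}$ playing the role that the odd cycle plays there. The upper bound $\chi_c(\widehat{BQ}(\ell,2k))\le 4$ is cheap: I would exhibit a proper $4$-colouring. The first two layers induce the M\"obius ladder $M_{2k}$, which is $3$-colourable by Brooks' theorem when $k\ge 2$ (when $k=1$ they induce $K_4$, coloured with all four colours); then I extend the colouring inward layer by layer, using that each $v_{i+1,j}$ has only two neighbours in layer $i$, and finally give the universal vertex $u$ a colour not used on its neighbourhood. Hence $\chi_c(\widehat{BQ}(\ell,2k))\le\chi(\widehat{BQ}(\ell,2k))\le 4$.

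For the lower bound I would suppose, toward a contradiction, that $c$ is a circular $r$-colouring with $r<4$. Since every edge of $\widehat{BQ}(\ell,2k)$ is negative and $r<4$, the mapping $c$ is far-polar: at a vertex $x$ every neighbour $y$ has $d(c(x),c(y))\ge 1>r/4$, so the diameter orthogonal to the radius through $c(x)$ separates $c(x)$ from all of its neighbours. I then fix notation: for $1\le i\le\ell$ let $L_i$ be the $2k$-cycle $v_{i,1}v_{i,2}\cdots v_{i,2k}$ on layer $i$ (not a subgraph of $\widehat{BQ}(\ell,2k)$), and for $1\le i\le\ell-1$ let $Z_i$ be the zigzag $4k$-cycle $v_{i,1}v_{i+1,1}v_{i,2}v_{i+1,2}\cdots v_{i,2k}v_{i+1,2k}$, which \emph{is} an all-negative subgraph. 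The two facts I would verify by inspecting indices are: (i) the two components of $Z_i^{\#2}$ are exactly $L_i$ (odd positions of $Z_i$) and $L_{i+1}$ (even positions), with the same cyclic order as inside $Z_{i+1}$, so that $w_i:=\omega(L_i,c^{sh})$ is unambiguous; and (ii) the first two layers together with $Z_1$ and the rung edges $v_{1,j}v_{1,j+k}$, $v_{2,j}v_{2,j+k}$ form the M\"obius ladder $M_{2k}$ with outer cycle $Z_1$ — every antipodal pair of $Z_1$ receives a rung, a layer-$1$ pair from the first family of rungs and a layer-$2$ pair from the second.

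Granting (i) and (ii) the argument is short: Lemma~\ref{lem:WC_2k^2} applied to each even cycle $(Z_i,-)$ gives $w_i\equiv w_{i+1}\pmod 2$, so all $w_i$ have a common parity; Lemma~\ref{lem: mobius ladder} applied to the far-polar mapping $c$ on $M_{2k}$ (outer cycle $Z_1$) gives that $w_2=\omega(Z_1^{\#2e},c^{sh})$ is odd, so every $w_i$ is odd, in particular $w_\ell$; but $u$ together with $V(L_\ell)$ and the negative edges joining them is circularly $r$-coloured with $r<4$, so by Observation~\ref{obs:AddingUniversalVertex} (whose argument only uses that $u$ is joined to all of $L_\ell$ by negative edges) we get $w_\ell=\omega(L_\ell,c^{sh})=0$, which is even — a contradiction. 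When $\ell=2$ the propagation step is vacuous and the contradiction on $L_2$ is immediate. Thus $\chi_c(\widehat{BQ}(\ell,2k))\ge 4$, and with the upper bound, equality.

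I do not expect a deep obstacle: all the ingredients — Lemma~\ref{lem:WC_2k^2}, Lemma~\ref{lem: mobius ladder}, Observation~\ref{obs:AddingUniversalVertex} — are already available, and the proof is their assembly. The one place demanding genuine care is the structural identification~(ii): confirming that the subgraph on the first two layers really is $M_{2k}$, and that its outer cycle is the \emph{same} $Z_1$ whose exact-square components are $L_1$ and $L_2$, together with the compatibility in~(i) which is what lets the common parity travel unchanged from the M\"obius-ladder end of the cylinder down to the layer $L_\ell$ on which the universal vertex sits.
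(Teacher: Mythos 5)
Your proof is correct and follows essentially the same route as the paper's: far-polarity of any circular $r$-colouring with $r<4$, Lemma~\ref{lem: mobius ladder} giving odd winding numbers for the two layers forming the M\"obius ladder, Observation~\ref{obs:AddingUniversalVertex} giving winding number $0$ for the layer adjacent to the universal vertex, and Lemma~\ref{lem:WC_2k^2} propagating parity along the zigzag cycles between consecutive layers. The only difference is that you make explicit the structural identifications (i)--(ii) and the $4$-colouring upper bound, which the paper leaves implicit.
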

	
	
	\begin{proof}
		As in the previous cases, we can consider $\widehat{BQ}({\ell},2k)$ as a graph obtained from the $l \times 2k$ cylindrical grid by adding a universal vertex on the first layer and completing the last two layers into a M\"obuis ladder. For any $r < 4$, 
  $r$-coloring would be a far-polar mapping of this graph, which, by Lemma~\ref{lem: mobius ladder} would imply an odd winding number for each of the last layers in the $C^{sh}$ extension, but by  Observation~\ref{obs:AddingUniversalVertex} the first layer has the winding number $0$, but by Lemma~\ref{lem:WC_2k^2} all layers have the same parity of winding number.
	\end{proof}

	Finally, we use this to prove that $\widehat{BM}(\ell, 2k)$ also has the same circular chromatic number.
	
	\begin{theorem}\label{thm:ChiBQlC2k}
		For any positive integers $\ell$ and $k$, we have $\chi_c(\widehat{BM}({\ell},2k))=4$.
	\end{theorem}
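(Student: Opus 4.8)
The plan is to imitate the proofs of Theorems~\ref{thm:ChiMlC2k+1}--\ref{thm:WidningC4}, the only genuinely new ingredient being the analysis of the $\{u_1,\dots,u_k\}$-gadget attached to the first layer. For the upper bound: every edge of $C_{\ell\times 2k}$ joins two consecutive layers, the universal vertex $u$ has all its neighbours in the last layer, and each $u_i$ has all its neighbours in the first layer; hence colouring the layers by parity, putting $u_1,\dots,u_k$ in the part opposite to the first layer and $u$ in the part opposite to the last layer, is a proper $2$-colouring of the underlying graph, so $\chi_c(\widehat{BM}(\ell,2k))\le 4$.

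For the lower bound, assume toward a contradiction that $c$ is a circular $r$-colouring with $r<4$, and view $\widehat{BM}(\ell,2k)$ as the cylindrical grid $C_{\ell\times 2k}$ with $u$ attached to the last layer and the $\{u_i\}$-gadget attached to the first. The ``body'' of the argument is verbatim as before: between two consecutive layers the grid carries the negative even cycle $(C_{4k},-)$ whose exact square is precisely the two layer cycles $C_{2k}$, so by Lemma~\ref{lem:WC_2k^2} (using Observation~\ref{obs : far-polar coloring}) the $c^{sh}$-winding numbers of all layers have the same parity, and by Observation~\ref{obs:AddingUniversalVertex} the last layer has winding number $0$. Hence the first-layer cycle $L$ on $v_{_{1,1}}v_{_{1,2}}\cdots v_{_{1,2k}}$ in this cyclic order (whose edges are not edges of the graph) has an even $c^{sh}$-winding number, and it suffices to prove that the gadget forces $\omega(L,c^{sh})$ to be \emph{odd}.

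To do this I would first record the geometric content of the gadget: for each $i$, the negative edges $u_iv_{_{1,i}}$, $u_iv_{_{1,i+1}}$ place $c(v_{_{1,i}})$ and $c(v_{_{1,i+1}})$ within distance $\frac r2-1<1$ of $\overline{c(u_i)}$, while the positive edges $u_iv_{_{1,i+k}}$, $u_iv_{_{1,i+k+1}}$ place $c(v_{_{1,i+k}})$ and $c(v_{_{1,i+k+1}})$ within distance $\frac r2-1<1$ of $c(u_i)$. Letting $i$ run from $1$ to $k$ shows that the ``near'' pairs cluster around the points $\overline{c(u_1)},\dots,\overline{c(u_k)}$, the ``far'' pairs around their antipodes, and consecutive clusters overlap. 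From $L$ together with $k$ auxiliary vertices $u'_1,\dots,u'_k$ coloured by $\overline{c(u_1)},\dots,\overline{c(u_k)}$ one then builds an auxiliary M\"obius ladder $M_{2k}$ realised on $O_r$: a $4k$-cycle interleaving the $v_{_{1,j}}$'s (at odd positions) with the $u_i$'s and $u'_i$'s (at even positions) so that $C_{4k}^{\#2o}$ is exactly $L$, the distance-$2k$ chords being the ``antipodal'' pairs $v_{_{1,j}}v_{_{1,j+k}}$ together with the pairs $u'_iu_i$; equivalently, one first switches at every $u_i$, so that the $u'_i$ become genuine vertices and the chords genuine edges of the switched gadget. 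If one can show that the resulting mapping is a far-polar mapping of this $M_{2k}$, then Lemma~\ref{lem: mobius ladder} gives $\omega(L,c^{sh})=\omega(C_{4k}^{\#2o},c^{sh})$ odd, contradicting the previous paragraph.

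The crux, and the step I expect to require real care, is exactly this far-polar verification. In all three earlier constructions the outer gadget used a single type of edge, so any circular $r$-colouring with $r<4$ was automatically far-polar on the relevant auxiliary cycle by Observation~\ref{obs : far-polar coloring}; here the gadget mixes signs, so at a vertex such as $u'_i$ (whose $M_{2k}$-neighbours are $v_{_{1,i}}$, $v_{_{1,i+1}}$ and $u_i$) one must combine \emph{both} families of inequalities at $u_i$ — the negative ones pinning $\overline{c(u_i)}$ beside the two ``near'' vertices and the positive ones pinning $c(u_i)$ beside the two ``far'' vertices — to rule out bad configurations, for instance $\overline{c(u_i)}$ lying strictly between $c(v_{_{1,i}})$ and $c(v_{_{1,i+1}})$ on the short arc. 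I expect the rest to reduce to bookkeeping on the cyclic order of the $3k$ points $c(v_{_{1,j}})$, $c(u_i)$ on $O_r$; should the far-polar check for the natural choice of auxiliary ladder prove too delicate, the fallback is to mimic the \emph{proof} of Lemma~\ref{lem: mobius ladder} directly, running the green/orange parity count over the quadrilateral faces $u_iv_{_{1,i}}v_{_{2,i}}v_{_{1,i+1}}$ and $u_iv_{_{1,i+k}}v_{_{2,i+k}}v_{_{1,i+k+1}}$ of the quadrangulation.
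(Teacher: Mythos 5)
Your overall scheme (same-parity winding numbers across layers via Lemma~\ref{lem:WC_2k^2}, winding number $0$ at the last layer via Observation~\ref{obs:AddingUniversalVertex}, so it suffices to force an odd winding number on the first-layer cycle $L$) and the upper bound are fine, but the one step that carries all the new mathematical content --- that the $\{u_1,\dots,u_k\}$-gadget forces $\omega(L,c^{sh})$ to be odd --- is exactly the step you leave unproven. You reduce it to checking that your auxiliary M\"obius ladder (the $4k$-cycle interleaving the $v_{_{1,j}}$ with the $u_i$ and their antipodal copies $u_i'$, with chords $v_{_{1,j}}v_{_{1,j+k}}$ and $u_iu_i'$) carries a far-polar mapping, and you yourself flag this as ``the crux.'' This is a genuine gap, not just an omitted routine verification: at a rung vertex $v_{_{1,j}}$ the chord partner $v_{_{1,j+k}}$ is only forced (through the common neighbour $u_j$, one negative and one positive edge) to lie at distance at least $2-\frac r2$ from $c(v_{_{1,j}})$, a quantity that tends to $0$ as $r\to 4$; locally one can place $c(v_{_{1,j+k}})$ at distance $0.1$ on one side and $c(u_{j-1})$, $c(u_j)$ at distance exactly $1$ on either side (all constraints satisfied), and then no diameter separates $c(v_{_{1,j}})$ from its three ladder-neighbours. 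So far-polarity of your auxiliary ladder is not a consequence of the edge constraints vertex by vertex, Lemma~\ref{lem: mobius ladder} cannot be invoked off the shelf, and your fallback (redoing the green/orange parity count over the gadget quadrilaterals) is only named, not carried out.

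For comparison, the paper sidesteps this entirely: it switches at the $2k$ vertices $v_{1,1},v_{2,1},\dots,v_{1,k},v_{2,k}$ of $\widehat{BQ}(\ell+2,2k)$ so that all diagonal edges of its M\"obius ladder become positive, then folds by identifying the two ends of each diagonal edge (and each $v_{1,i}$ with $v_{3,i+k}$); the image is $\widehat{BM}(\ell,2k)$ with some positive loops, which do not affect $\chi_c$. Monotonicity of the circular chromatic number under homomorphism together with Theorem~\ref{thm:WidningC4} then gives $\chi_c(\widehat{BM}(\ell,2k))\ge\chi_c(\widehat{BQ}(\ell+2,2k))=4$, with no new winding-number analysis of the gadget. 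If you want to keep your direct route, the honest path is to emulate the proof of Lemma~\ref{lem: mobius ladder} on the gadget's $4$-cycles $u_iv_{_{1,i}}v_{_{2,i}}v_{_{1,i+1}}$ and $u_iv_{_{1,i+k}}v_{_{2,i+k}}v_{_{1,i+k+1}}$ rather than its statement, since those $4$-cycles are genuine (positive) faces of the signed graph and their far-polarity does follow from the colouring; as written, though, the proof is incomplete.
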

	
	\begin{proof}
		As $BM(\ell, 2k)$ is a signed bipartite graph, $4$ is an upper bound on its circular chromatic number.
		To prove that it is also a lower bound, we consider $\widehat{BQ}({\ell}+2,2k)$ and switch at first $2k$ vertices of the M\"obuis ladder built on the first two layers. These are vertices labelled $v_{1,1}, v_{2,1}, v_{1,2}, v_{2,2}, \dots, v_{1,k}, v_{2,k}$. At the end all diagonal edges of the M\"obuis ladder are positive. We consider a homomorphic image of this signed graph by identifying two ends of each diagonal edge of the M\"obuis ladder. That is $v_{1,1}$ is identified with $v_{1,k+1}$, $v_{2,1}$ is identified with $v_{2,k+1}$ and so on. Then, we can identify each $v_{1,i}$ with $v_{3,i+k}$ as well ($i \in \{1,2, \dots k\}$). It can then be verified that the image is the signed graph obtained from $BM(\ell, 2k)$ by adding a positive loop to each of the $k$ vertices $u_j$ and to the next layer. 
		However, a positive loop does not change the circular chromatic number of a signed graph. Thus we have  $\chi_c(BM({\ell},2k))\geq \chi_c(\widehat{BQ}({\ell}+2,2k))=4$.    	
	\end{proof}
	
	We note that by identifying the two ends of each positive edge in $\widehat{BQ}(\ell, 2k+1)$ we get a copy of $M_{\ell}(C_{2k+1})$ together with some positive loops. Thus, in a similar fashion, one can view Theorem~\ref{thm:ChiMlC2k+1} as a corollary of Theorem~\ref{thm:ChiBMl2k+1}. 
	
	\section{Concluding remarks}\label{sec:Conclusions}

	The special subclass of $M_{k}(C_{2k+1})$, on $2k^2+k+1$ vertices, is conjectured in \cite{VT95} to have the smallest number of vertices among 4-chromatic graphs of odd-girth $2k+1$. In \cite{ES18}, this is verified to be the case with an added assumption that every pair of odd cycles share a vertex. For the general case, a lower bound of $(k-1)^2$ for the number of vertices of a 4-critical graph of odd girth $2k+1$ is given in \cite{J01} modifying the method of \cite{N99}. A natural bipartite analogue of this question is to find the smallest number of vertices of a signed bipartite graph of negative girth $2k$ whose circular chromatic number is $4$. Here we gave two families of such graphs, where the graphs of negative girth $2k$ have $2k^2-k+1$ vertices, namely $\widehat{BQ}(k, 2k-1)$ and $\widehat{BM}(k-1, 2k)$. A lower bound of $k^2$, for the order of such signed bipartite graphs is given in an unpublished joint work of the second author with L.A. Pham and Z. Wang. They also improve the lower bound of $(k-1)^2$ to $k^2$ for the case of graphs.
	
	The notion of winding number, or rather parity of it, as a simpler case of the use of topological method in the study of graph coloring, is used in a number of papers. For example it is the main tool in proving that $K_3$ is multiplicative \cite{ES85}. Payan's proof in \cite{P92} of the fact that generalized Mycielski on odd cycles has chromatic number 4 also relies on this technique. However, in most application of the winding number the geometric notion is hidden. One of the advantages of this work then is, by using the notion of circular coloring rather than proper coloring, we formally present the use of winding number for proving coloring properties of graphs. Here we recall how the winding number is used in the literature and show its equivalence to our approach.
	
	Let $C$ be cycle together with a 3-coloring $\psi$. We view this 3-coloring as a circular 3-coloring, thus the 3 colors are three points, say $a$, $b$, and $c$ of a circle mutually at the same distance. We then considers $C^{\#2}$ and its extension by the shortest rule. Thus if the two neighbors $x_{i-1}$ and $x_{i+1}$ of a vertex $x_{i}$ are given the same color by $\psi$, then the whole edge $x_{i-1}x_{i+1}$ of $C^{\#2}$ is mapped to a single point. Otherwise, the three vertices $x_{i-1}$,  $x_{i}$, and $x_{i+1}$ are given three different coloring, (say $a$, $b$, and $c$, respectively), and thus in the shortest extension the edge $x_{i-1}x_{i+1}$ of $C^{\#2}$ is mapped to the shorter arc with ends $a$ and $c$, that is the one which does not include the point $b$. Thus if we were to compute the parity of  the winding number $w(C^{\#2}, \psi)$ extended with the shortest rule, with our green interval being on the shorter side of the arc $ac$, all we need is to count the number of vertices colored $b$ whose two neighbors are colored differently. This number must then be the same as vertices colored $a$ (or $c$) whose neighbors are of different colors. If the length of the cycle itself is odd, then parity of the total number of vertices whose neighbors are colored differently is the same as the parity of the number of vertices colored $a$ whose neighbors are colored differently. This total number is then the tool commonly used in the literature.


	{\bf Acknowledgment.} This work is supported by the following grants and projects: 1. ANR-France project HOSIGRA (ANR-17-CE40-0022). 2. Indo-French Center of Applied Mathematics, project AGRAHO “Applications of graph homomorphisms”(MA/IFCAM/18/39). 3. Math-AmSud project PLANNING. 4. National Research, Development and Innovation Office (NKFIH) grant K--120706 of NKFIH Hungary. 5. WLI grant(SB22231494MAIITM008570) of IIT Madras, India. The second author would also like to thank Lan Ann Pham and Zhouningxin Wang for earlier discussions on this subject.

	\bibliographystyle{plain}
	\bibliography{Mycielski}

\begin{thebibliography}{10}

\bibitem{CHZ99}
Gerard~J. Chang, Lingling Huang, and Xuding Zhu.
\newblock Circular chromatic numbers of {M}ycielski's graphs.
\newblock {\em Discrete Math.}, 205(1-3):23--37, 1999.

\bibitem{C74}
Va\v{s}ek Chv\'{a}tal.
\newblock The minimality of the {M}ycielski graph.
\newblock In {\em Graphs and combinatorics ({P}roc. {C}apital {C}onf., {G}eorge
  {W}ashington {U}niv., {W}ashington, {D}.{C}., 1973)}, Lecture Notes in Math.,
  Vol. 406, pages 243--246. Springer, Berlin, 1974.

\bibitem{DGMVZ05}
Matt DeVos, Luis Goddyn, Bojan Mohar, Dirk Vertigan, and Xuding Zhu.
\newblock Coloring-flow duality of embedded graphs.
\newblock {\em Trans. Amer. Math. Soc.}, 357(10):3993--4016, 2005.

\bibitem{ES85}
Mohamed~M. El-Zahar and Norbert Sauer.
\newblock The chromatic number of the product of two 4-chromatic graphs is 4.
\newblock {\em Combinatorica}, 5:121--126, 1985.

\bibitem{ES18}
Louis Esperet and Mat\v{e}j Stehl\'{\i}k.
\newblock The width of quadrangulations of the projective plane.
\newblock {\em J. Graph Theory}, 89(1):76--88, 2018.

\bibitem{GT97}
John Gimbel and Carsten Thomassen.
\newblock Coloring graphs with fixed genus and girth.
\newblock {\em Trans. Amer. Math. Soc.}, 349(11):4555--4564, 1997.

\bibitem{GYJS}
Andr{\'a}s Gy{\'a}rf{\'a}s, Tommy Jensen, and Michael Stiebitz.
\newblock On graphs with strongly independent color-classes.
\newblock {\em J. Graph Theory}, 46(1):1--14, 2004.

\bibitem{HZ03}
Hossein Hajiabolhassan and Xuding Zhu.
\newblock Circular chromatic number and {M}ycielski construction.
\newblock {\em J. Graph Theory}, 44(2):106--115, 2003.

\bibitem{H69}
Frank Harary.
\newblock {\em Graph theory}.
\newblock Addison-Wesley Publishing Co., Reading, Mass.-Menlo Park,
  Calif.-London, 1969.

\bibitem{J01}
Tao Jiang.
\newblock Small odd cycles in 4-chromatic graphs.
\newblock {\em J. Graph Theory}, 37(2):115--117, 2001.

\bibitem{JMNNQ23}
Andrea Jimenez, Jessica McDonald, Reza Naserasr, Kathryn Nurse, and
  Quiroz~Daniel A.
\newblock Balanced-chromatic number and hadwiger-like conjectures.

\bibitem{KS18}
Yingli Kang and Eckhard Steffen.
\newblock Circular coloring of signed graphs.
\newblock {\em J. Graph Theory}, 87(2):135--148, 2018.

\bibitem{KNNW21+}
Kardo\v{s}, Franti\v{s}ek, Narboni, Jonathan Narboni, Reza Naserasr, and
  Zhouningxin Wang.
\newblock Circular ($4-\epsilon$)-coloring of signed graphs.
\newblock {\em Submitted}.

\bibitem{LLGS03}
Peter Che~Bor Lam, Wensong Lin, Guohua Gu, and Zengmin Song.
\newblock Circular chromatic number and a generalization of the construction of
  {M}ycielski.
\newblock {\em J. Combin. Theory Ser. B}, 89(2):195--205, 2003.

\bibitem{lin2006several}
Wensong Lin, Jianzhuan Wu, Peter Che~Bor Lam, and Guohua Gu.
\newblock Several parameters of generalized mycielskians.
\newblock {\em Discrete applied mathematics}, 154(8):1173--1182, 2006.

\bibitem{LLKn}
L\'aszl\'o Lov\'asz.
\newblock Kneser's conjecture, chromatic number, and homotopy.
\newblock {\em J. Combin.\ Theory Ser.\ A}, 25(3):319--324, 1978.

\bibitem{M03}
Ji{\v{r}}{\'\i} Matou{\v{s}}ek.
\newblock {\em Using the Borsuk-Ulam theorem: lectures on topological methods
  in combinatorics and geometry}.
\newblock Springer, 2008.

\bibitem{MS19}
Tobias M\"{u}ller and Mat\v{e}j Stehl\'{\i}k.
\newblock Generalised {M}ycielski graphs and the {B}orsuk-{U}lam theorem.
\newblock {\em Electron. J. Combin.}, 26(4):Paper No. 4.8, 8, 2019.

\bibitem{M55}
Jan Mycielski.
\newblock Sur le coloriage des graphs.
\newblock {\em Colloq. Math.}, 3:161--162, 1955.

\bibitem{NW21+}
Reza Naserasr and Zhouningxin Wang.
\newblock Signed bipartite circular cliques and a bipartite analogue of
  grötzsch's theorem.
\newblock {\em Submitted}.

\bibitem{NWZ21}
Reza Naserasr, Zhouningxin Wang, and Xuding Zhu.
\newblock Circular chromatic number of signed graphs.
\newblock {\em Electron. J. Combin.}, 28(2):Paper No. 2.44, 40, 2021.

\bibitem{V87}
N.~Van Ngoc.
\newblock {\em On graph colorings (Hungarian). Ph.D. Thesis, Hungarian Academy
  of Sciences, Budapest}.
\newblock 1987.

\bibitem{N99}
A.~Nilli.
\newblock Short odd cycles in {$4$}-chromatic graphs.
\newblock {\em J. Graph Theory}, 31(2):145--147, 1999.

\bibitem{P92}
Charles Payan.
\newblock On the chromatic number of cube-like graphs.
\newblock {\em Discrete Math.}, 103(3):271--277, 1992.

\bibitem{ST06}
G\'{a}bor Simonyi and G\'{a}bor Tardos.
\newblock Local chromatic number, {K}y {F}an's theorem and circular colorings.
\newblock {\em Combinatorica}, 26(5):587--626, 2006.

\bibitem{S85}
Michael Stiebitz.
\newblock {\em Beitr{\"a}ge zur Theorie der f{\"a}rbungskritischen Graphen}.
\newblock 1985.

\bibitem{T01}
Claude Tardif.
\newblock Fractional chromatic numbers of cones over graphs.
\newblock {\em J. Graph Theory}, 38(2):87--94, 2001.

\bibitem{VT95}
Nguyen Van~Ngoc and Zsolt Tuza.
\newblock {$4$}-chromatic graphs with large odd girth.
\newblock volume 138, pages 387--392. 1995.
\newblock 14th British Combinatorial Conference (Keele, 1993).

\bibitem{Y96}
Dale~A. Youngs.
\newblock {$4$}-chromatic projective graphs.
\newblock {\em J. Graph Theory}, 21(2):219--227, 1996.

\end{thebibliography}

\end{document}